\documentclass[11pt,oneside,reqno]{article}
\usepackage[width=15.7cm,height=24cm,centering]{geometry}
\usepackage{amsmath,amsthm,amssymb,color,bm}
\usepackage[authoryear]{natbib}
\usepackage{booktabs}

\usepackage{tikz}
\RequirePackage{amsthm,amsmath,amsfonts,amssymb,mathrsfs,dsfont,mathtools,thmtools}
\RequirePackage[colorlinks,citecolor=blue,urlcolor=blue,linkcolor=blue]{hyperref}
\usepackage[capitalize]{cleveref}
\RequirePackage{graphicx}

\crefname{equation}{}{}
\crefformat{equation}{\textup{(#2#1#3)}}
\crefrangeformat{equation}{\textup{(#3#1#4)--(#5#2#6)}}
\crefdefaultlabelformat{#2\textup{#1}#3}
\crefname{lemma}{Lemma}{Lemmas}
\crefname{page}{p.}{pp.}
\usepackage[normalem]{ulem}

\usepackage{bbm}
\usepackage{mathrsfs}
\usepackage{graphicx}
\usepackage{tikz}
\usepackage{enumitem}
\usetikzlibrary{arrows, automata}
\usetikzlibrary{calc}
\usetikzlibrary{positioning}

\numberwithin{equation}{section}
\allowdisplaybreaks[4]

\theoremstyle{plain}
\newtheorem{theorem}{Theorem}[section]
\newtheorem{proposition}{Proposition}[section]
\newtheorem{lemma}{Lemma}[section]
\newtheorem{corollary}{Corollary}[section]

\newtheorem{remark}{Remark}[section]

\theoremstyle{definition}

\makeatletter

\newcount\minute
\newcount\hour
\newcount\hourMins
\def\now{%
\minute=\time%
\hour=\time \divide \hour by 60%
\hourMins=\hour \multiply\hourMins by 60%
\advance\minute by -\hourMins%
\zeroPadTwo{\the\hour}:\zeroPadTwo{\the\minute}%
}
\def\zeroPadTwo#1{\ifnum #1<10 0\fi#1}

\renewcommand{\cite}{\citet}

\def\^#1{\ifmmode {\mathaccent"705E #1} \else {\accent94 #1} \fi}
\def\~#1{\ifmmode {\mathaccent"707E #1} \else {\accent"7E #1} \fi}

\def\*#1{#1^\ast}
\edef\-#1{\noexpand\ifmmode {\noexpand\bar{#1}} \noexpand\else \-#1\noexpand\fi}
\def\>#1{\vec{#1}}
\def\.#1{\dot{#1}}

\def\atop{\@@atop}
\def\*#1{\mathscr{#1}}

\renewcommand{\leq}{\leqslant}
\renewcommand{\le}{\leqslant}
\renewcommand{\geq}{\geqslant}
\renewcommand{\ge}{\geqslant}

\newcommand{\Tr}{\mathop{\mathrm{Tr}}}

\newcommand{\N}{\mathcal{N}}

\newcommand{\IE}{\mathbb{E}}
\newcommand{\IP}{\mathbb{P}}
\newcommand{\Var}{\mathop{\mathrm{Var}}\nolimits}
\newcommand{\Cov}{\mathop{\mathrm{Cov}}}

\newcommand{\IR}{\mathbb{R}}

\def\be#1{\begin{equation*}#1\end{equation*}}
\def\ben#1{\begin{equation}#1\end{equation}}
\def\bes#1{\begin{equation*}\begin{split}#1\end{split}\end{equation*}}
\def\besn#1{\begin{equation}\begin{split}#1\end{split}\end{equation}}

\def\mid{\vert}

\def\beqn#1\eeqn{\begin{align}#1\end{align}}
\def\beq#1\eeq{\begin{align*}#1\end{align*}}

\usepackage{graphicx}
\usepackage{latexsym}
\usepackage{amsmath,amsthm,amssymb,amscd}
\usepackage{epsf,amsmath}

\def\E{{\IE}}
\def\P{{\IP}}

\newcommand{\mcl}[1]{\mathcal{#1}}

\renewcommand\section{\@startsection {section}{1}{\z@}%
{-2.8ex \@plus -.8ex \@minus -.2ex}%
{1.0ex \@plus.2ex}%
{\center\small\sc\mathversion{bold}}}

\def\subsection#1{\@startsection {subsection}{2}{0pt}%
{-2.8ex \@plus -.8ex \@minus -.2ex}%
{.8ex \@plus.2ex}%
{\bf\mathversion{bold}}{#1}}

\def\subsubsection#1{\@startsection{subsubsection}{3}{0pt}%
{\medskipamount}%
{-10pt}%
{\normalsize\itshape}{\kern-2.2ex. #1.}}

\def\blfootnote{\xdef\@thefnmark{}\@footnotetext}

\makeatother

\begin{document}

\title{Martingale central limit theorems in $p$-Wasserstein distance}
\author{Xiao Fang$^*$, Yuta Koike$^\dagger$, Zi-Yao Su$^*$}
\date{\it The Chinese University of Hong Kong$^*$, University of Tokyo$^\dagger$} 
\maketitle

\noindent{\bf Abstract:} 
We obtain multivariate martingale central limit theorems in $p$-Wasserstein distance with respect to the $\ell_r$ norm in $\mathbb{R}^d$ for $p\geq 1$ and $r\in [1,\infty]$, which generalize the results for $p=1$ and $r=2$ in the literature. 
As corollaries, we obtain the Yurinskii coupling and Cram\'er-type moderate deviation results. 
We also provide an illustrative application to the stochastic gradient descent algorithm. 
To prove our main results, we combine Lindeberg's swapping argument with a new Gaussian convolution inequality controlling the $p$-Wasserstein distance between a Gaussian convolved with a perturbation and the Gaussian with matching mean and covariance matrix.
The latter is obtained by developing the recent line of research on $p$-Wasserstein bounds. 

\medskip

\noindent{\bf AMS 2020 subject classification:} 
60F05; 60F10; 60G46; 62E17

\noindent{\bf Keywords and phrases:}  
Cram\'er-type moderate deviations; martingale CLT, Lindeberg's method, $p$-Wasserstein distance, stochastic gradient descent, Yurinskii coupling




\section{Introduction}

Martingale central limit theorems (CLTs) are a fundamental tool for statistical inference with dependent, adaptive, or sequentially collected data (\cite{hall1980martingale,li2018applications}).  
Martingale CLTs are equally central in stochastic approximation and machine learning (\cite{polyak1992acceleration}).  
More recently, martingale and related Markov-chain CLTs have been used to study uncertainty quantification for stochastic gradient descent (SGD), temporal-difference learning, two-timescale stochastic approximation, and reinforcement learning algorithms; see, among others, \citet{anastasiou2019normal}, \citet{borkar2021ode}, \citet{hu2024central}, \citet{srikant2025rates}, and \citet{wu2025uncertainty}.  These applications make non-asymptotic distributional error bounds especially important, since finite-time guarantees are needed to assess the accuracy of confidence intervals, hypothesis tests, and uncertainty estimates produced by learning algorithms.

The non-asymptotic theory of martingale CLTs has a long history, but the available results
mostly focus on the 1-dimensional setting for various distributional distances (see, e.g., \cite{bolthausen1982exact,haeusler1988nonuniform,mourrat2013rate,rollin2018quantitative,fan2020wasserstein,dedecker2022rates,guo2024wasserstein})
In contrast, non-asymptotic multivariate martingale CLTs are comparatively sparse.  Existing results include bounds for smooth test functions with an application to averaged SGD (\cite{anastasiou2019normal}),  Yurinskii-type couplings and high-dimensional Gaussian strong approximations for martingales (\cite{cattaneo2025yurinskii}), Berry--Esseen bounds in hyper-rectangle Kolmogorov or convex distances 
(\cite{belloni2018high, wu2026berry}) and bounds in the $1$-Wasserstein distance for martingales and Markov-chain functionals (\cite{srikant2025rates,wu2025uncertainty}).
However, there are no $p$-Wasserstein bounds for $p>1$ available for general multivariate martingale CLTs; see, for example, the discussion in \cite{kong2026finite}. This is due to the technical difficulty that the duality representation of the 1-Wasserstein distance does not extend to general $p$-Wasserstein distances. 
Also, the connection between the Zolotarev and Wasserstein distances used in the one-dimensional results of \cite{dedecker2022rates} and \cite{guo2024wasserstein} is known in multiple dimensions only for $p=r=2$ (see \cite{bolbotowski2026sharp}).

The goal of this paper is to fill this gap by providing a general multivariate martingale CLT in $p$-Wasserstein distance, for arbitrary $p\ge 1$, with transportation cost measured in an $\ell_r$ norm, $r\in [1,\infty]$.
This is not only a natural question in its own right, but also a useful tool for deriving Yurinskii coupling and Cram\'er-type moderate deviation results (see \cref{sec:app}). The techniques for proving $p$-Wasserstein bounds in multivariate CLTs have been recently developed by \cite{LeNoPe15,bonis2020stein,fang2023p}. However, these techniques are not directly applicable to martingales and new ideas are needed.
In this paper, we further develop the above techniques for $p$-Wasserstein bounds and combine them with Lindeberg's swapping argument to handle martingale CLTs. 

In \cref{sec:main}, we first give a key Gaussian convolution inequality (\cref{prop:1}) that compares a Gaussian vector convolved with a small perturbation to the Gaussian distribution with matching covariance. 
This proposition involves much of our technical contribution, including a general upper bound for the $p$-Wasserstein distance with respect to the $\ell_r$ norm (\cref{lem:follmer}), a Bonis-type expansion for martingales (equation~\eqref{eq:bonis}) and an error bound for comparing two conditional expectations given slightly different conditions (\cref{lem:2}).
We then combine this key proposition with Lindeberg's swapping argument to obtain martingale CLTs in \cref{thm:3,cor:2,cor:1}, assuming first that the conditional covariance matrices add up to a constant matrix. 
\cref{thm:3} offers a natural extension of the 1-Wasserstein bound by \cite[Theorem~3.3]{wu2025uncertainty} to the stronger
\(\mcl W_{p,r}\) distance for general \(p\ge1\) and \(r\in[1,\infty]\).
Finally, in \cref{thm:2,thm:4}, we relax this assumption and obtain bounds involving the difference between the sum of conditional covariances and its expectation.

Our main results exhibit several features, each of which suggests a natural direction for further improvement. First, obtaining the $p$-Wasserstein rates established here generally requires finite $3p$-th moments of the martingale differences. Second, in contrast with the i.i.d.\ setting, the general convergence rate is of order $n^{-1/6}$ unless additional assumptions on the conditional covariances are imposed.
Third, under the finite $3p$-th moment assumptions considered here, our bounds also require $L^{3p/2}$ control of the fluctuations of the predictable quadratic variation around its mean.
Finally, the dimension dependence contains a multiplicative factor of order $d$, irrespective of the choice of $r$ in the $\mcl{W}_{p,r}$ bound. It would be interesting to determine whether any of these moment requirements, convergence rates, or dimension-dependent factors can be improved.


In \cref{sec:app}, we first apply our main results to the Yurinskii coupling for martingales and compare our result with that in \cite{cattaneo2025yurinskii}. We then deduce Cram\'er-type moderate deviation results from our $p$-Wasserstein bounds following the approach of \cite{fang2023p}. The result provides an extension of those in \cite{fan2024cramer} from one dimension to multiple dimensions. Finally, we provide an illustrative application to the SGD algorithm. Our result is potentially applicable to general stochastic approximation problems as mentioned in \cite{kong2026finite} and we leave such an application to future investigation.


In \cref{sec:proof}, we give the proof of our main results, leaving the proof of some technical lemmas to \cref{sec:lem}. The proofs of the applications are given in \cref{sec:proofapp}.

\paragraph{Concurrent work.} In a concurrent work, Morgane Austern (personal communication) and her coauthors considered $p$-Wasserstein bounds with respect to a norm different from the $\ell_r$ norm for random variables of a different structure from martingales. A common feature is that both of our works begin with a bound like in \cref{lem:follmer}.

During the preparation of this work, we became aware of the work of \cite{zhang2026gaussian}, which was posted on arXiv a few days ago. They obtained $p$-Wasserstein bounds with respect to the $\ell_2$ norm in $\mathbb{R}^d$ for $p\geq 2$ in the CLT for additive functionals of Markov chains and the associated martingales constructed by solving the Poisson equation. Their method is tailored to that setting and differs from ours.

\paragraph{Statement of AI use.} GPT 5.6 Pro was used solely to improve the exposition of the manuscript.

\paragraph{Acknowledgements.} We thank Jia-Xue Wang for helpful discussions.
Fang X. was partially supported by Hong Kong RGC GRF 14304822, 14303423, 14302124, 14304125 and a CUHK direct grant. 
Koike Y. was partially supported by JST CREST Grant Number JPMJCR2115 and JSPS KAKENHI Grant Numbers JP24K14848, JP26K02870.

\section{Main results}\label{sec:main}

\paragraph{Notations.}
We adopt the following notation conventions. 
If $x=(x_1,\dots, x_d)^\top$ is a (possibly random) vector, we write $|x|_r:=(|x_1|^r+\dots+|x_d|^r)^{1/r}$ for its $\ell^r$-norm, where $r \in [1,\infty)$ and $|x|_\infty:=\max_{1\leq j\leq d}|x_j|$.

If $A$ is a matrix, then $\|A\|_{r_1\to r_2}:=\sup_{|u|_{r_1}\leq 1}|Au|_{r_2}$ denotes the induced $\ell^{r_1}$--$\ell^{r_2}$ operator norm for $r_1, r_2\in [1,\infty]$.
Write $\|A\|_{r}=\|A\|_{r\to r}$.

If $X$ is a random vector and $A$ is a random matrix defined on the probability space $(\Omega, \mcl{F}, \P)$, we define
\[
\|X\|_{p,r}:=\bigl(\mathbb E|X|_r^p\bigr)^{1/p},
\qquad
\|A\|_{p,r}:=\bigl(\mathbb E\|A\|_r^p\bigr)^{1/p}.
\]
Thus, for a random vector, we first take the $\ell^r$-norm and then the $L^p(\P)$ norm.
Sometimes, we also write in short $|X|=|X|_2$ for a (random) vector and $\|X\|_p=\|X\|_{p,2}$ for a (random) vector.


Let $I_d$ denote the $d\times d$ identity matrix.

In addition, for two probability measures $\mu$ and $\nu$ on $\mathbb{R}^d$ with finite $p$-th moments, and for $r \in [1,\infty]$, the $p$-Wasserstein distance with respect to the $\ell^r$ norm is defined by
\[
\mcl W_{p,r}(\mu,\nu) 
:= \inf_{\pi \in \Pi(\mu,\nu)} 
\Biggl( 
\int_{\mathbb{R}^d \times \mathbb{R}^d} 
|x-y|_r^p \, d\pi(x,y) 
\Biggr)^{1/p},
\]
where $\Pi(\mu,\nu)$ denotes the collection of all couplings of $\mu$ and $\nu$. 
In particular, when $r=2$, this reduces to the standard $p$-Wasserstein distance.

We use $\phi(\cdot)$ to denote the $d$-dimensional standard normal density function.

We assume that the conditional expectations and moments appearing throughout the paper are well-defined and finite.

\medskip
\noindent

\subsection{The key proposition}

In this subsection, we provide a new Gaussian convolution inequality controlling the $p$-Wasserstein distance between a Gaussian convolved with a perturbation and the Gaussian with matching mean and covariance matrix. 
It is obtained by developing further the recent line of research on $p$-Wasserstein bounds (see the proof in \cref{sec:proof}).

\begin{proposition}\label{prop:1}
Let $\xi\in \IR^d$ be a random vector with $\E \xi=0$ and $\Cov(\xi)=V$.
Assume $\Sigma\succ0$. Let $G\in \IR^d$ be a Gaussian random vector with $\E G=0$, $\Cov(G)=\Sigma-V\succeq0$ and independent of $\xi$. 
Let $W=\xi+G$. Then,
for all \( p \ge 1 \) and \( r \in [1,\infty] \),  
the $(p,r)$–Wasserstein distance satisfies
\[
\mcl W_{p,r}(\mcl{L}(W),N(0,\Sigma))
\le Cp\|\xi\|_{3p,r}\|\Sigma^{-1/2}\xi\|_{3p,2}^2,
\]
where $C$ is a universal constant.
\end{proposition}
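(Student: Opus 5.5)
We reduce first to $\Sigma=I_d$ by a linear change of variables, and then control $\mcl W_{p,r}$ through a Föllmer/heat-flow interpolation that exploits the Gaussian component $G$ already present in $W$.

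\emph{Step 1: reduction.} Write $W=\Sigma^{1/2}\tilde W$, where $\tilde W=\tilde\xi+\tilde G$, $\tilde\xi=\Sigma^{-1/2}\xi$, and $\Cov(\tilde G)=I_d-\tilde V$. The target bound contains $\|\xi\|_{3p,r}$ in the original coordinates and only $\|\tilde\xi\|_{3p,2}^2$ in the normalized ones. We therefore do not bound $\mcl W_{p,r}$ after normalizing. Instead we keep the transport map in the original coordinates and use normalization only to control the score-type quantities.

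\emph{Step 2: a general upper bound for $\mcl W_{p,r}$.} We interpolate along the Ornstein--Uhlenbeck semigroup. Set $W_t=e^{-t}W+\sqrt{1-e^{-2t}}Z$ with $Z\sim N(0,\Sigma)$ independent, and let $\rho_t$ be its density. Transporting along the flow gives
\[
\mcl W_{p,r}(\mcl L(W),N(0,\Sigma))\le \int_0^\infty \bigl\|\,v_t(W_t)\,\bigr\|_{p,r}\,dt ,
\]
with velocity $v_t(x)=-\Sigma\nabla\log\rho_t(x)-x$. This is the kind of bound that \cref{lem:follmer} is meant to provide, and we may use it as such. By Tweedie's formula, $v_t(W_t)$ is a conditional expectation of a linear functional of $W$ given $W_t$. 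Since $W$ already contains the Gaussian $G$, we condition on $\xi$ as well and obtain a Bonis-type representation:
\[
v_t(W_t)=\E\Bigl[\textstyle\sum_{k\ge 2}\text{(Hermite terms in }\tilde\xi)\,\Big|\,W_t\Bigr].
\]
The mean and covariance of $\xi$ match those of the Gaussian, so the order-$0$ and order-$1$ terms vanish. The first surviving term is cubic and has size $e^{-3t}\,\xi\,(\tilde\xi^{\top}\cdots\tilde\xi)$ multiplied by a Hermite factor.

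\emph{Step 3: integrating in $t$.} For small $t$ we use the crude bound $\|v_t(W_t)\|_{p,r}\lesssim \|\xi\|_{p,r}+\cdots$. For large $t$ we use the cubic term. The Hermite factor is a polynomial in a Gaussian, and its $L^p$ norm grows like $p$, which produces the factor $Cp$. Hölder's inequality with exponents $(3p,3p,3p)$ gives $\|\xi\|_{3p,r}\|\tilde\xi\|_{3p,2}^2$, multiplied by $(1-e^{-2t})^{-1}$-type weights. The resulting time integral must converge with no logarithmic loss.

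\emph{Main obstacle.} The difficulty is that the conditional expectation in Step 2 is taken given $W_t$ rather than given the Gaussian part. This is where \cref{lem:2} (comparing conditional expectations under slightly different conditionings) enters. The argument would first compare the expectation given $W_t$ with a version in which $\xi$ is independent of the conditioning. The error in this comparison is itself of cubic order in $\tilde\xi$, which is what allows the cutoff in $t$ to be balanced without losing a $\log$ factor.
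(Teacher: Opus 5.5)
Your architecture (the transport bound of \cref{lem:follmer}, a Bonis-type Hermite expansion, hypercontractivity for the factor $p$, and \cref{lem:2}) matches the paper's. However, your claim that the order-one term vanishes is false, and that term is where most of the work lies. Write $\xi_\Sigma=\Sigma^{-1/2}\xi$ and $\widetilde I_{d,t}=I_d-e^{-2t}\Sigma^{-1/2}V\Sigma^{-1/2}$. After the $k=1$ Hermite term is combined with the Gaussian-regression term, what remains inside the conditional expectation is $e^{-t}\bigl(\xi\xi_\Sigma^\top-\mathbb E[\xi\xi_\Sigma^\top]\bigr)\widetilde I_{d,t}^{-1/2}Z$. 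Matching $\Cov(\xi)$ only makes its conditional expectation given $Z$ vanish. Given the actual conditioning vector $Z+e^{-t}\widetilde I_{d,t}^{-1/2}\xi_\Sigma$, it is nonzero and a priori only of quadratic order $|\xi|_r|\xi_\Sigma|_2$. To show it is cubic, the paper applies \cref{lem:2} to this centred term with $X=0$ and $Y=e^{-t}\widetilde I_{d,t}^{-1/2}\xi_\Sigma$ (truncated); this supplies the extra factor of $\xi_\Sigma$. Applying \cref{lem:2} to $\mathbb E[\xi\mid\cdot\,]$ itself, as your last paragraph suggests, gains only one factor of $\xi_\Sigma$ and yields a quadratic error.

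Your Step 3 also loses a logarithm, and \cref{lem:2} cannot repair that. A crude $O(\|\xi\|_{3p,r})$ bound on $[0,t_0]$, plus cubic terms weighted by $e^{-3t}(1-e^{-2t})^{-1}$ on $[t_0,\infty)$, gives $t_0\|\xi\|_{3p,r}+Cp\log(1/t_0)\|\xi\|_{3p,r}\|\xi_\Sigma\|_{3p,2}^2$. After optimizing $t_0$ this leaves an extra factor $\log\bigl(1/(p\|\xi_\Sigma\|_{3p,2}^2)\bigr)$.

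The missing idea is to fold $G$ into the noise and to split on the size of $\|\xi_\Sigma\|_{3p,2}$ rather than on $t$. Given $\xi$, the conditioning vector $F_t$ is $N(e^{-t}\xi_\Sigma,\widetilde I_{d,t})$.
\begin{itemize}
\item If $\|\xi_\Sigma\|_{3p,2}^2>1/2$, the crude bound integrated over all $t$ already gives $C\|\xi\|_{3p,r}(1+\sqrt p\,\|\xi_\Sigma\|_{3p,2})\le Cp\|\xi\|_{3p,r}\|\xi_\Sigma\|_{3p,2}^2$.
\item Otherwise $\|\Sigma^{-1/2}V\Sigma^{-1/2}\|_2\le\mathbb E|\xi_\Sigma|_2^2\le\frac12$, so $\widetilde I_{d,t}\succeq\frac12 I_d$ for every $t\ge0$. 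The Hermite variable $e^{-t}\widetilde I_{d,t}^{-1/2}\xi_\Sigma$ then has no $(1-e^{-2t})^{-1/2}$ blow-up, and no cutoff in $t$ is needed.
\end{itemize}

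Finally, with only $3p$-th moments the Hermite series must be truncated. The $k$-th term costs $e^{-kt}(k!)^{-1/2}p^{k/2}\bigl\||\xi|_r|\widetilde I_{d,t}^{-1/2}\xi_\Sigma|_2^k\bigr\|_p$, which involves moments of every order. Moreover, the identity of \cref{lem:1} is only justified for bounded perturbations. The paper replaces $\xi_\Sigma$ by $\widetilde\xi_\Sigma=\xi_\Sigma\mathbf 1\{|\xi_\Sigma|_2\le\eta_t(p)\}$ with $\eta_t(p)=\sqrt{(e^{2t}-1)/p}$. The $k\ge2$ terms then sum to $Cpe^{-2t}\bigl\||\xi|_r|\xi_\Sigma|_2^2\bigr\|_p$. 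The truncation is paid for by two extra terms:
\begin{itemize}
\item a second application of \cref{lem:2}, which switches the conditioning vector from $\xi_\Sigma$ to $\widetilde\xi_\Sigma$;
\item the covariance defect $\mathbb E[\xi\xi_\Sigma^\top\mathbf 1\{|\xi_\Sigma|_2>\eta_t(p)\}]$.
\end{itemize}
Both are handled by Markov's inequality and produce factors $\eta_t(p)^{-1}=\sqrt{p/(e^{2t}-1)}$, which are integrable at $t=0$.
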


\begin{remark}
(a) \cite{paulin2026theoretical} recently obtained a related bound on
\be{
\mcl W_{p,2}(\mathcal L(W),N(0,\Sigma-V))
}
by means of Gaussian-convolution
coupling inequalities, and used it to derive Wasserstein bias bounds for
stochastic-gradient kinetic Langevin/MCMC algorithms. 
In contrast,
we compare \(\mathcal L(W)\) with the Gaussian \(N(0,\Sigma)\), whose variance matches
that of \(W\). This removes the leading variance-mismatch term and yields a
smaller-order remainder. This smaller-order bound is crucial in the Lindeberg
argument used below to obtain a CLT.

(b) Lemma 1.6 in \cite{zhai2018high} gives a bound on $\mcl W_{2,2}(\mathcal L(W),N(0,\Sigma))$ when $\xi$ is bounded and $V$ is proportional to $\Sigma$. Apart from these additional conditions, the proof of this lemma relies on a variant of Talagrand's transportation inequality and the rotational invariance of the Euclidean norm. Consequently, extending this proof technique beyond the case $p=r=2$ appears challenging.
\end{remark}

\subsection{Martingale CLTs}

Let \( \xi_1, \dots, \xi_n \in \mathbb{R}^d \) be a martingale difference sequence adapted to a filtration \( \mathcal{F}_0, \dots, \mathcal{F}_n \). In particular,
\[
\mathbb{E} \left[ \xi_k \mid \mathcal{F}_{k-1} \right] = 0 \quad \text{a.s. for } 1 \leq k \leq n.
\]
Define the partial sums (assume that the $\xi$'s are normalized so that $S_n$ is of order $\Theta_p(1)$)
\[
S_0 = 0, \quad S_k = \xi_1 + \cdots + \xi_k, \quad \text{for } 1 \leq k \leq n.
\]
For \( 1 \leq k \leq n \), define the conditional and unconditional covariances
\bes{
&V_k = \mathbb{E} \left[ \xi_k \xi_k^{\top} \mid \mathcal{F}_{k-1} \right], \quad \Pi_k=\sum_{i=k}^n V_i, \quad \Pi_{n+1}=0, \quad \Sigma=\Pi_1,\\
& 
\bar{V}_k = \mathbb{E} \left[ \xi_k \xi_k^{\top} \right] = \mathbb{E} V_k, \quad 
\bar{\Sigma} = \operatorname{Var}(S_n) =\sum_{k=1}^n \bar{V}_k.
}

We begin with our first general result assuming $\Sigma=\bar \Sigma$ a.s. If $\Sigma\ne\bar \Sigma$, additional error terms necessarily appear involving their difference. See \cref{thm:2,thm:tail-cor22,thm:4}.

\begin{theorem}[The case $\Sigma=\bar \Sigma$ a.s.]\label{thm:3}
In the above setting, assume that $\Sigma=\bar \Sigma$ a.s. Let $M\succeq0$ be a deterministic symmetric $d\times d$ matrix such that $\Pi_k+M\succ0$ a.s. for $1\le k\le n$ (we will take $M=0$ in \cref{cor:2}, and $M=\sigma^2 I_d$ for some $\sigma^2>0$ in \cref{cor:1}).
We have
\besn{\label{eq:thm3}
&\mcl W_{p,r}(\mcl{L}(S_n), N(0, \bar\Sigma))\\
\leq& C p \sum_{k=1}^n
\left\|
\lVert\xi_k\,|\,\mcl{F}_{k-1}\rVert_{3p,r}
\lVert(\Pi_k+M)^{-1/2} \xi_k\,|\,\mcl{F}_{k-1}\rVert_{3p,2}^{2}
\right\|_{p}
+2 \|M^{1/2} Z\|_{p,r},
}
where $C$ is a universal constant and $Z\sim N(0,I_d)$. 
\end{theorem}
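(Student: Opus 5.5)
We will run a Lindeberg swapping argument in which the martingale increments are replaced one at a time by conditionally Gaussian pieces, and each swap is controlled by \cref{prop:1} applied conditionally. Enlarge the probability space with a standard Gaussian vector $Z\sim N(0,I_d)$ independent of everything. Since $\Sigma=\bar\Sigma$ is deterministic, $\Pi_{k}=\bar\Sigma-\sum_{i<k}V_i$ is $\mcl F_{k-1}$-measurable, and the same holds for $\Pi_{k+1}=\Pi_k-V_k$. This is the key structural observation. For $k=1,\dots,n+1$ define the hybrid vectors
\[
T_k:=S_{k-1}+(\Pi_k+M)^{1/2}Z ,
\]
with $Z$ independent of $\mcl F_n$. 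Then $T_{n+1}=S_n+M^{1/2}Z$ and $T_1=(\bar\Sigma+M)^{1/2}Z\sim N(0,\bar\Sigma+M)$. Adding the $M$ term is harmless: $\mcl W_{p,r}(\mcl L(S_n),\mcl L(S_n+M^{1/2}Z))\le\|M^{1/2}Z\|_{p,r}$, and likewise $\mcl W_{p,r}(N(0,\bar\Sigma+M),N(0,\bar\Sigma))\le\|M^{1/2}Z\|_{p,r}$ by the coupling $(\bar\Sigma^{1/2}Z+M^{1/2}Z')$ versus $\bar\Sigma^{1/2}Z$. These two comparisons produce the term $2\|M^{1/2}Z\|_{p,r}$. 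By the triangle inequality it then suffices to bound $\sum_{k=1}^n\mcl W_{p,r}(\mcl L(T_{k+1}),\mcl L(T_k))$.

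For a single swap we argue as follows. Conditionally on $\mcl F_{k-1}$, the vector $T_{k+1}-S_{k-1}=\xi_k+(\Pi_{k+1}+M)^{1/2}Z$ has the form $W=\xi+G$ of \cref{prop:1}. Here $\xi=\xi_k$ has conditional mean $0$ and conditional covariance $V_k$, and $G$ is Gaussian with covariance $(\Pi_k+M)-V_k\succeq0$ and is independent of $\xi_k$ given $\mcl F_{k-1}$. Meanwhile $T_k-S_{k-1}\sim N(0,\Pi_k+M)$ given $\mcl F_{k-1}$, with $\Pi_k+M\succ0$. Applying \cref{prop:1} with $\Sigma:=\Pi_k+M$ under the regular conditional law gives, for almost every $\omega$, a conditional coupling $(U,U')$ of the two conditional laws satisfying
\[
\E[|U-U'|_r^p\mid\mcl F_{k-1}]^{1/p}\le Cp\,\lVert\xi_k\,|\,\mcl F_{k-1}\rVert_{3p,r}\lVert(\Pi_k+M)^{-1/2}\xi_k\,|\,\mcl F_{k-1}\rVert_{3p,2}^2 .
\]
Gluing this coupling to the common shift $S_{k-1}$ yields a coupling of $T_{k+1}$ and $T_k$. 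Taking the $L^p$ norm over $\mcl F_{k-1}$ bounds $\mcl W_{p,r}(\mcl L(T_{k+1}),\mcl L(T_k))$ by the $k$-th summand of \eqref{eq:thm3}. Summing over $k$ finishes the proof.

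The main obstacle is making the conditional application rigorous. We must select the near-optimal conditional couplings measurably in $\omega$, so that the glued pair is a genuine random vector on an extended space. The standard route is to use the existence of optimal couplings together with a measurable selection theorem for optimal transport plans depending measurably on the marginals, or alternatively to work directly with regular conditional distributions and the joint measurability of the Wasserstein distance. We also need the conditional covariance of the Gaussian part to equal exactly $\Pi_{k+1}+M$, which is $\mcl F_{k-1}$-measurable. This is precisely where the hypothesis $\Sigma=\bar\Sigma$ a.s.\ is used, and it is why the theorem as stated would fail without it.
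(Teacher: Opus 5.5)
Your proposal is correct and follows essentially the paper's proof. Both smooth by an independent $M^{1/2}Z$ at a cost of $2\|M^{1/2}Z\|_{p,r}$, telescope in Lindeberg fashion, apply \cref{prop:1} conditionally on $\mathcal F_{k-1}$ with $\Sigma=\Pi_k+M$, and integrate the conditional couplings. The only difference is cosmetic: you build the hybrids from a single Gaussian $(\Pi_k+M)^{1/2}Z$ instead of the paper's $\sum_{i\ge k}V_i^{1/2}Z_i+M^{1/2}Z_0$, which gives the same conditional laws given $\mathcal F_{k-1}$ and makes the conditional independence from $\xi_k$ immediate.
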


\begin{remark}
(a) \cite[Theorem~3.3]{wu2025uncertainty}
obtained a 1-Wasserstein bound for vector-valued martingales under the deterministic
terminal quadratic variation condition. For $M\succ0$, translating their notation into ours, their bound reads as
\[
\begin{aligned}
\mcl W_{1,2}(\mcl{L}(S_n), N(0, \bar\Sigma))
\leq &
C\Big\{ \bigl(2+\log(dn\|\bar\Sigma+M\|_2)\bigr)^+
\sum_{k=1}^n
\E\!\left[
    |\xi_k|_2 |(\Pi_k+M)^{-1/2}\xi_k|_2^2 
\right]
\\
&\quad
+ \frac{1}{\sqrt n}\,
    \Tr\!\left\{\log(\bar\Sigma+M)-\log M\right\}
+ \sqrt{\Tr(M)} \Big\}.
\end{aligned}
\]
\cref{thm:3} offers a natural extension to
\(\mcl W_{p,r}\) distances for general \(p\ge1\) and \(r\in[1,\infty]\). 
Moreover, the linear growth in $p$ in \cref{eq:thm3} is optimal as will be argued in \cref{rem:optimalp}.

(b) For sums of independent random vectors, the optimal convergence rate in the
\(p\)-Wasserstein CLT can be obtained under a finite \((p+2)\)-moment
assumption on the summands (\cite{bonis2020stein}). This moment order
comes from the Rosenthal-type inequality used in the independent case. In the
martingale setting considered here, our argument instead relies on
\cref{prop:1}, whose one-step error involves \(3p\)-th moments of the increments.
Consequently, our bound requires a finite \(3p\)-th moment assumption. See a related \(4p\)-th moment condition required in the exchangeable-pair
approach of \cite{fang2023p}.
\end{remark}

To apply \cref{eq:thm3} and other results in the following, we need to bound $\|X\|_{p,r}$ for \(X\sim N(0,\Sigma)\). Such a bound was obtained by \cite[Lemma B.4]{cattaneo2025yurinskii} for $p=1$. 
The lemma below extends their result for all \(p\ge 1\). Its proof is deferred to \cref{sec:lem}.

\begin{lemma}\label{lem:gaussian-norm-bound}
Let \(X\sim N(0,\Sigma)\), where \(\Sigma\in\mathbb R^{d\times d}\) is symmetric positive
semidefinite. 
Then, for \(1\le r<\infty\),
\[
\begin{gathered}
\|X\|_{p,r}
\le C\sqrt r\left(\sum_{j=1}^d\Sigma_{jj}^{r/2}\right)^{1/r}
+C\sqrt p\,\|\Sigma^{1/2}\|_{2\to r},\\[-1pt]
\|X\|_{p,\infty}
\le C\sqrt{\max_{1\le j\le d}\Sigma_{jj}}\,
\bigl(\sqrt{\log(2d)}+\sqrt p\bigr),
\end{gathered}
\]
where $C$ is a universal constant.
\end{lemma}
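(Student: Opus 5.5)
The plan is the standard route: control the mean of $|X|_r$ (or $|X|_\infty$) by coordinatewise moments, and the fluctuation around the mean by Gaussian concentration for Lipschitz functions. Write $X=\Sigma^{1/2}Z$ with $Z\sim N(0,I_d)$ and set $f(z)=|\Sigma^{1/2}z|_r$. Then $f$ is Lipschitz with respect to the Euclidean norm, with constant $L=\|\Sigma^{1/2}\|_{2\to r}$. The Gaussian concentration inequality gives $\IP(|f(Z)-\E f(Z)|>t)\le 2e^{-t^2/(2L^2)}$, and integrating this tail bound yields $\|f(Z)-\E f(Z)\|_{L^p}\le C\sqrt p\,L$. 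Minkowski's inequality then gives
\[
\|X\|_{p,r}\le \E|X|_r + C\sqrt p\,\|\Sigma^{1/2}\|_{2\to r},
\]
so it remains to bound $\E|X|_r$.

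\textbf{Case $1\le r<\infty$.} By Jensen's inequality, $\E|X|_r\le (\E|X|_r^r)^{1/r}=(\sum_j \E|X_j|^r)^{1/r}$. Since $X_j\sim N(0,\Sigma_{jj})$, we have $\E|X_j|^r=\Sigma_{jj}^{r/2}\E|Z_1|^r\le (C\sqrt r)^r\Sigma_{jj}^{r/2}$, using the standard Gaussian moment bound $\|Z_1\|_{L^r}\le C\sqrt r$. This gives the first term $C\sqrt r(\sum_j\Sigma_{jj}^{r/2})^{1/r}$, which completes this case.

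\textbf{Case $r=\infty$.} Write $\sigma_{\max}^2=\max_j\Sigma_{jj}$. For the mean, the standard maximal inequality for Gaussians (via the moment generating function: $e^{\lambda\E\max_j|X_j|}\le \sum_j \E e^{\lambda|X_j|}\le 2d\,e^{\lambda^2\sigma_{\max}^2/2}$, then optimize over $\lambda$) gives $\E|X|_\infty\le C\sigma_{\max}\sqrt{\log(2d)}$. For the fluctuation, the Lipschitz constant is $\|\Sigma^{1/2}\|_{2\to\infty}=\max_j|e_j^\top\Sigma^{1/2}|_2=\sigma_{\max}$. The concentration step above then contributes $C\sqrt p\,\sigma_{\max}$, and combining the two pieces gives the second display.

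Every step is routine. The only point needing care is computing the Lipschitz constants and confirming that concentration holds uniformly for all $p\ge1$ with a universal constant. Both follow directly from the tail integral $\E|Y|^p\le 2\int_0^\infty p\,t^{p-1}e^{-t^2/(2L^2)}dt=(CL\sqrt p)^p$.
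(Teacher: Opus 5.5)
Your proposal is correct and follows essentially the same route as the paper. Both use Gaussian concentration for the $\|\Sigma^{1/2}\|_{2\to r}$-Lipschitz map $z\mapsto|\Sigma^{1/2}z|_r$ to control fluctuations, Jensen plus one-dimensional Gaussian moments for $\E|X|_r$ when $r<\infty$, and an exponential-moment maximal inequality for $\E|X|_\infty$. The only cosmetic differences are that the paper bounds the one-sided part $(f(Z)-\E f(Z))_+$ rather than using the two-sided tail, and it writes the maximal inequality in log-sum-exp form; neither changes the argument.
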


To demonstrate the typical order of the bound \cref{eq:thm3}, we give two corollaries below. In the first corollary, we assume a uniform lower bound on the conditional covariances $V_k$. Then, under suitable moment assumptions, we obtain a $\log n/\sqrt{n}$ rate.

\begin{corollary}[Nearly $n^{-1/2}$ rate under uniform ellipticity]\label{cor:2}
Assume the setting of \cref{thm:3}, and assume in addition that
$\bar \Sigma \succ 0$ and, for some constant $0<\alpha\le1$,
\[
        \frac{\alpha}{n}\bar \Sigma \preceq V_k,
        \qquad 1\le k\le n,
\]
almost surely. 
Then, for all \(p\ge 1\) and
\(r\in[1,\infty]\),
\ben{\label{eq:cor2.1-1}
\mcl W_{p,r}\bigl(\mathcal L(S_n),N(0,\bar \Sigma)\bigr)
\le \frac{Cp}{\alpha}
\sum_{k=1}^n
\frac{n}{n-k+1}
\left\|
   \|\xi_k\mid\mathcal F_{k-1}\|_{3p,r}
   \|\bar \Sigma^{-1/2}\xi_k\mid\mathcal F_{k-1}\|_{3p,2}^2
\right\|_{p},
}
where \(C\) is a universal constant.

In particular, if (recall that the $\xi$'s are normalized)
\ben{\label{eq:cor2.1cond}
   \|\xi_k\|_{3p,r}
   \|\bar \Sigma^{-1/2}\xi_k\|_{3p,2}^2
\leq \frac{K}{n^{3/2}}
}
for a constant $K$,
then
\ben{\label{eq:cor2.1-2}
\mcl W_{p,r}\bigl(\mathcal L(S_n),N(0,\Sigma)\bigr)
\le
\frac{CKp}{\alpha}\cdot\frac{\log(en)}{\sqrt n}.
}
\end{corollary}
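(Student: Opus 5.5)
This follows from \cref{thm:3} with $M=0$. Set $M=0$; then the Gaussian term $2\|M^{1/2}Z\|_{p,r}$ vanishes. We only need $\Pi_k\succ0$ a.s., and this comes from the ellipticity assumption: almost surely
\[
\Pi_k=\sum_{i=k}^n V_i\succeq \frac{\alpha(n-k+1)}{n}\bar\Sigma\succ0 .
\]
Since the matrix inverse is operator monotone and reverses the Loewner order, $\Pi_k^{-1}\preceq \frac{n}{\alpha(n-k+1)}\bar\Sigma^{-1}$. Hence for every vector $x$,
\[
|\Pi_k^{-1/2}x|_2^2=x^\top\Pi_k^{-1}x\le \frac{n}{\alpha(n-k+1)}|\bar\Sigma^{-1/2}x|_2^2 .
\]
Take $x=\xi_k$. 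The matrix $\Pi_k$ is random, but it is $\mathcal F_{k-1}$-measurable. Here we use that $\Sigma=\bar\Sigma$ a.s. is deterministic, so $\Pi_k=\bar\Sigma-\sum_{i<k}V_i$ is $\mathcal F_{k-1}$-measurable. The pointwise inequality therefore passes inside the conditional $L^{3p}$ norm:
\[
\|\Pi_k^{-1/2}\xi_k\mid\mathcal F_{k-1}\|_{3p,2}^2\le \frac{n}{\alpha(n-k+1)}\|\bar\Sigma^{-1/2}\xi_k\mid\mathcal F_{k-1}\|_{3p,2}^2 .
\]
Substituting this into \cref{eq:thm3} gives \cref{eq:cor2.1-1}.

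For \cref{eq:cor2.1-2}, the plan is to bound the $p$-th norm of the product of conditional norms by unconditional norms. We apply Hölder for conditional expectations with exponents $3$ and $3/2$:
\[
\E\bigl[A^pB^{2p}\bigr]\le \bigl(\E A^{3p}\bigr)^{1/3}\bigl(\E B^{3p}\bigr)^{2/3},
\]
where $A=\|\xi_k\mid\mathcal F_{k-1}\|_{3p,r}$ and $B=\|\bar\Sigma^{-1/2}\xi_k\mid\mathcal F_{k-1}\|_{3p,2}$. Note $\E A^{3p}=\E|\xi_k|_r^{3p}$, and similarly for $B$. This gives
\[
\bigl\|AB^2\bigr\|_p\le \|\xi_k\|_{3p,r}\|\bar\Sigma^{-1/2}\xi_k\|_{3p,2}^2\le K n^{-3/2}.
\]
Then
\[
\sum_{k=1}^n\frac{n}{n-k+1}\cdot Kn^{-3/2}=\frac{K}{\sqrt n}\sum_{j=1}^n\frac1j\le \frac{K\log(en)}{\sqrt n}.
\]
Finally, $\Sigma=\bar\Sigma$ a.s., so $N(0,\Sigma)$ in \cref{eq:cor2.1-2} coincides with $N(0,\bar\Sigma)$.

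The step needing the most care is checking that $\Pi_k$ is $\mathcal F_{k-1}$-measurable, so that the deterministic Loewner comparison transfers into the conditional norms. The conditional Hölder step is the other point to verify. Both are routine, and the harmonic sum is what produces the $\log(en)$ factor.
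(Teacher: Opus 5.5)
Your proposal is correct and follows essentially the same route as the paper. Both apply \cref{thm:3} with $M=0$, and use the $\mathcal F_{k-1}$-measurable Loewner bound $\Pi_k^{-1}\preceq \frac{n}{\alpha(n-k+1)}\bar\Sigma^{-1}$ inside the conditional norms, then H\"older in the outer expectation together with the harmonic sum $\sum_{j\le n}1/j\le\log(en)$ (and you correctly note that $N(0,\Sigma)=N(0,\bar\Sigma)$ in \cref{eq:cor2.1-2} since $\Sigma=\bar\Sigma$ a.s.).
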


\begin{remark}\label{rem:optimalp}
(a) Under the assumptions of \cref{cor:2}, if \(p,r,d\) are fixed and the condition \cref{eq:cor2.1cond} is satisfied, then
\cref{cor:2} yields
\[
    \mcl W_{p,r}\bigl(\mathcal L(S_n),N(0,\bar \Sigma)\bigr)
    = O\!\left(\frac{\log n}{\sqrt n}\right).
\]
Thus we recover the usual \(n^{-1/2}\) Berry--Esseen rate up to a logarithmic
factor. The linear dependence on \(p\) in our bound is also 
optimal as discussed by \cite{fang2023p}.

(b) A near-$n^{-1/2}$ rate has also been obtained for $1$-Wasserstein distance under a boundedness condition (\cite[Corollary~3.4]{wu2025uncertainty}) or an expected uniform ellipticity assumption (\cite[Theorem~1]{srikant2025rates}). However, their arguments work directly with
Lipschitz test functions, or with the corresponding Stein/Poisson equation, and
exploit cancellations at the level of expectations. Such a test-function
representation is not available for \(\mcl W_{p,r}\) when \(p>1\), where we instead use the coupling and Gaussian
convolution approach.
\end{remark}

In the general case, however, the conditional covariances $V_k$ can have wild behavior and we obtain a bound that is typically of the order $O(n^{-1/6})$.

\begin{corollary}[$n^{-1/6}$-rate bound]\label{cor:1}
Assume the setting of \cref{thm:3}; in particular, $\Sigma=\bar\Sigma$ a.s.  
Let
\[
A_{p,r}:=\sum_{k=1}^n
\left\|\|\xi_k\mid\mathcal F_{k-1}\|_{3p,r}
\|\xi_k\mid\mathcal F_{k-1}\|_{3p,2}^{2}\right\|_p.
\]
Then, for every $p\ge1$ and $r\in[1,\infty]$, we have
\ben{\label{eq:cor1}
\mcl W_{p,r}\!\left( \mathcal L(S_n),N(0,\bar\Sigma)\right)
\le
C(pA_{p,r})^{1/3}\|Z\|_{p,r}^{2/3},
}
where $C$ is a universal constant and $Z\sim N(0,I_d)$.
\end{corollary}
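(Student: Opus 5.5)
We derive \cref{cor:1} from \cref{thm:3} by taking $M=\sigma^2 I_d$ with a free parameter $\sigma^2>0$ and optimizing over $\sigma$ at the end. Since $\Pi_k\succeq 0$, we have $\Pi_k+\sigma^2 I_d\succeq \sigma^2 I_d\succ0$ a.s., so \cref{thm:3} applies. We also have $(\Pi_k+\sigma^2 I_d)^{-1}\preceq \sigma^{-2}I_d$, which gives the pointwise bound
\[
|(\Pi_k+\sigma^2 I_d)^{-1/2}\xi_k|_2\le \sigma^{-1}|\xi_k|_2 .
\]
The matrix $\Pi_k$ is $\mcl F_{k-1}$-measurable: it is a sum of $V_i$ for $i\ge k$, and $\Pi_k=\Sigma-\sum_{i<k}V_i$ with $\Sigma=\bar\Sigma$ deterministic. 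Hence the conditional norm satisfies
\[
\lVert(\Pi_k+M)^{-1/2}\xi_k\mid\mcl F_{k-1}\rVert_{3p,2}\le\sigma^{-1}\lVert\xi_k\mid\mcl F_{k-1}\rVert_{3p,2}
\]
almost surely. This measurability point is the only thing worth checking carefully, and it is exactly why the assumption $\Sigma=\bar\Sigma$ is used.

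Plugging this into \cref{eq:thm3}, the first term is at most $Cp\,\sigma^{-2}A_{p,r}$. The second term is $2\|\sigma Z\|_{p,r}=2\sigma\|Z\|_{p,r}$. Therefore, for every $\sigma>0$,
\[
\mcl W_{p,r}\bigl(\mcl L(S_n),N(0,\bar\Sigma)\bigr)\le Cp\,\sigma^{-2}A_{p,r}+2\sigma\|Z\|_{p,r}.
\]

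It remains to minimize over $\sigma$. We choose
\[
\sigma=\bigl(pA_{p,r}/\|Z\|_{p,r}\bigr)^{1/3},
\]
which balances the two terms. Both terms then become multiples of $(pA_{p,r})^{1/3}\|Z\|_{p,r}^{2/3}$, which yields \cref{eq:cor1}. The degenerate case $A_{p,r}=0$ forces $\xi_k=0$ a.s.; then $S_n=0$ and $\bar\Sigma=0$, so the distance vanishes. Alternatively, one can let $\sigma\downarrow0$. The case $\|Z\|_{p,r}>0$ always holds.

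There is no real obstacle here. The argument is a direct specialization of \cref{thm:3} followed by a scalar optimization.
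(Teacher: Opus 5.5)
Your proposal is correct and is essentially the paper's proof: apply \cref{thm:3} with $M=\sigma^2 I_d$ to get the bound $Cp\,A_{p,r}/\sigma^2+2\sigma\|Z\|_{p,r}$, then choose $\sigma=(pA_{p,r}/\|Z\|_{p,r})^{1/3}$, and let $\sigma\downarrow0$ when $A_{p,r}=0$. One small correction: the conditional comparison $\|(\Pi_k+M)^{-1/2}\xi_k\mid\mathcal F_{k-1}\|_{3p,2}\le\sigma^{-1}\|\xi_k\mid\mathcal F_{k-1}\|_{3p,2}$ follows from the pointwise bound and monotonicity of conditional expectation alone, so the hypothesis $\Sigma=\bar\Sigma$ is needed only inside \cref{thm:3}, not in this step.
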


\begin{remark}\label{rem:typical16}
In typical situations, because of the normalization, we expect $\xi_k\sim 1/\sqrt{n}$, hence $A_{p,r}\sim 1/\sqrt{n}$. In such situations, the bound in \cref{eq:cor1} is of the order $O(n^{-1/6})$.
\end{remark}

Finally, we consider the general case in which $\Sigma\ne \bar \Sigma$. Then, additional error terms involving $\Sigma-\bar \Sigma $ necessarily appear.
In the next result, we give a simple bound in the situation in which $\Sigma-\bar \Sigma\preceq \Delta_0$ for a fixed $d\times d$ positive semidefinite matrix $\Delta_0$. 

\begin{theorem}[The case \(\Sigma-\bar\Sigma\preceq\Delta_0\)]
\label{thm:2}
Under the martingale setup above, assume that
\[
    \Sigma-\bar\Sigma\preceq \Delta_0
    \qquad\text{almost surely},
\]
where \(\Delta_0\) is a deterministic symmetric positive semidefinite matrix.
Put
\[
    H:
      =\Delta_0-(\Sigma-\bar\Sigma).
\]
Then \(H\succeq0\) almost surely. For any deterministic symmetric
\(d\times d\) matrix \(M\succeq0\) such that \(\Pi_k+M\succ0\) a.s.
for \(1\le k\le n\), we have
\[
\begin{aligned}
\mcl W_{p,r}\bigl(\mathcal L(S_n),N(0,\bar\Sigma)\bigr)
\le\;&
Cp\sum_{k=1}^n
\Bigl\|
    \|\xi_k\mid\mathcal F_{k-1}\|_{3p,r}
    \|(\Pi_k+M)^{-1/2}\xi_k
        \mid\mathcal F_{k-1}\|_{3p,2}^{2}
\Bigr\|_p
\\
&\quad
+2\|M^{1/2}Z\|_{p,r}
+\|H^{1/2} Z\|_{p,r}
+\|\Delta_0^{1/2}Z\|_{p,r},
\end{aligned}
\]
where \(C\) is a universal constant and \(Z\sim N(0,I_d)\) is independent of everything else.
\end{theorem}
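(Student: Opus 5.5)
The plan is to reduce to \cref{thm:3} by appending one extra martingale increment that makes the total conditional covariance deterministic. Since $H=\Delta_0-(\Sigma-\bar\Sigma)\succeq0$ a.s. and $H$ is $\mathcal F_n$-measurable, we set $\xi_{n+1}:=H^{1/2}Z$ with $Z\sim N(0,I_d)$ independent of $\mathcal F_n$. We work with the filtration $\mathcal F_{n+1}:=\mathcal F_n\vee\sigma(Z)$. Then $\E[\xi_{n+1}\mid\mathcal F_n]=0$ and $V_{n+1}=H$, so the extended sequence $\xi_1,\dots,\xi_{n+1}$ is a martingale difference sequence. Its total conditional covariance is
\[
\Sigma'=\Sigma+H=\bar\Sigma+\Delta_0 ,
\]
which is deterministic. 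Taking expectations gives $\E\Sigma'=\bar\Sigma+\Delta_0$, so $\Sigma'=\bar\Sigma'$ a.s. and the hypothesis of \cref{thm:3} holds for the extended sequence, with limiting covariance $\bar\Sigma+\Delta_0$.

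Next we apply \cref{thm:3} to $S_{n+1}=S_n+\xi_{n+1}$, using the shifted matrix $M':=M+\Delta_0$ in place of $M$. The new tail sums are $\Pi_k'=\Pi_k+H$ for $k\le n$ and $\Pi'_{n+1}=H$. For the error terms, note that $(\Pi_k+H+M)^{-1}\preceq(\Pi_k+M)^{-1}$, so the increments $k\le n$ contribute at most what appears in the statement. The last increment needs more care. We have $H+M+\Delta_0\succeq H$, hence $|(H+M')^{-1/2}H^{1/2}Z|_2\le|Z|_2$. Without further input this leaves a term of size $\|H^{1/2}Z\|_{p,r}\,d$, which is not small.

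This last increment is the main obstacle, and it is why the approach should change. The simpler route is to avoid Lindeberg for the Gaussian step and use the triangle inequality together with Gaussian couplings instead:
\[
\mcl W_{p,r}(\mathcal L(S_n),N(0,\bar\Sigma))\le \mcl W_{p,r}(\mathcal L(S_n),\mathcal L(S_n+H^{1/2}Z))+\mcl W_{p,r}(\mathcal L(S_n+H^{1/2}Z),N(0,\bar\Sigma+\Delta_0))+\mcl W_{p,r}(N(0,\bar\Sigma+\Delta_0),N(0,\bar\Sigma)).
\]
We bound the three terms in turn. The first is at most $\|H^{1/2}Z\|_{p,r}$ by the obvious coupling. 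The third is at most $\|\Delta_0^{1/2}Z\|_{p,r}$, by writing $N(0,\bar\Sigma+\Delta_0)$ as the law of $X+\Delta_0^{1/2}Z'$ with $X\sim N(0,\bar\Sigma)$ and $Z'$ independent. For the middle term we rerun the Lindeberg proof of \cref{thm:3} on the first $n$ increments only, with target covariance $\bar\Sigma+\Delta_0$ and the Gaussian tail $H^{1/2}Z$ already present. Concretely, at step $k$ we compare $S_{k}+G_{k+1}$ with $S_{k-1}+G_k$, where conditionally on $\mathcal F_{k-1}$ the vector $G_k$ is centered Gaussian with covariance $\Pi_k+H+M$. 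The one-step error is controlled by \cref{prop:1} applied conditionally with $\Sigma=\Pi_k+H+M\succeq\Pi_k+M$. This gives the summands in the statement, using monotonicity of $(\cdot)^{-1/2}$ in the sense $|(\Pi_k+H+M)^{-1/2}x|_2\le|(\Pi_k+M)^{-1/2}x|_2$. The $M$-regularization is added and removed at a cost of $2\|M^{1/2}Z\|_{p,r}$, exactly as in \cref{thm:3}.

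The delicate point is to verify that the proof of \cref{thm:3} only uses that the final conditional covariance $\Pi_1+H=\bar\Sigma+\Delta_0$ is deterministic. This is what makes the endpoint an exact Gaussian $N(0,\bar\Sigma+\Delta_0+M)$. It should also use only that the intermediate covariances are predictable, with $\Pi_k+H$ being $\mathcal F_n$- rather than $\mathcal F_{k-1}$-measurable. To handle that measurability issue I would first try to condition on the whole path and use the monotonicity argument above. Failing that, I would realize $H^{1/2}Z$ through a conditional Gaussian construction inside the swapping.
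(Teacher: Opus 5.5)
Your second route has the same structure as the paper's proof. It uses the same three-term triangle inequality and the same Gaussian augmentation by $H^{1/2}Z$; the paper phrases this as appending $H^{1/2}Z_0$ as an $(n+1)$-st increment, which is your ``Gaussian tail already present''. It also runs the same conditional Lindeberg argument with \cref{prop:1} at $\Sigma=\Pi_k+H+M$ and $(\Pi_k+H+M)^{-1}\preceq(\Pi_k+M)^{-1}$.

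\textbf{The gap.} The step you flag as delicate is left open, and your diagnosis of it is wrong. $\Pi_k+H$ is not merely $\mathcal F_n$-measurable:
\[ \Pi_k+H=\Sigma-\sum_{i<k}V_i+\Delta_0-\Sigma+\bar\Sigma=\bar\Sigma+\Delta_0-\sum_{i<k}V_i , \]
which is $\mathcal F_{k-1}$-measurable, exactly as $\Pi_k$ is in the proof of \cref{thm:3}. This follows at once from your own observation that $\Sigma+H=\bar\Sigma+\Delta_0$ is deterministic.

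This fact is what the swap needs. Since $V_k$ is $\mathcal F_{k-1}$-measurable, so is $\Pi_{k+1}+H+M$. Your $G_{k+1}$ (i.e.\ $T_{k+1}+H^{1/2}Z+Y$ in the notation of the proof of \cref{thm:3}) has conditional law $N(0,\Pi_{k+1}+H+M)$ given $\mathcal F_n$. It therefore has that same law given $\mathcal F_{k-1}$ and is conditionally independent of $\xi_k$. That is precisely the hypothesis under which \cref{prop:1} can be applied conditionally on $\mathcal F_{k-1}$.

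Your suggested fallbacks would not supply it. Conditioning on the whole path makes $\xi_k$ deterministic, so \cref{prop:1}, which needs a centred perturbation whose covariance matches the Gaussian gap, has nothing to act on. And if the tail covariance really were only $\mathcal F_n$-measurable, $G_{k+1}$ would be a Gaussian mixture that is not conditionally independent of $\xi_k$, and the one-step bound would break.

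\textbf{Your first attempt was not actually blocked.} In the proof of \cref{thm:3}, the one-step error is controlled by the $\mcl W_{p,r}$-distance between $\mathcal L(\xi_k+T_{k+1}+Y\mid\mathcal F_{k-1})$ and $\mathcal L(V_k^{1/2}Z_k+T_{k+1}+Y\mid\mathcal F_{k-1})$. \cref{prop:1} is only one way to bound that distance. For the appended increment $\xi_{n+1}=H^{1/2}Z_0$ with $V_{n+1}=H$, both conditional laws given $\mathcal F_n$ are $N(0,H+M)$. So that step contributes zero, and the term of size $d\,\|H^{1/2}Z\|_{p,r}$ never appears.

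This is how the paper argues, keeping the original $M$; the shift to $M+\Delta_0$ is unnecessary. Rerun the proof of \cref{thm:3} for the augmented sequence to bound $\mcl W_{p,r}(\mathcal L(S_n+H^{1/2}Z_0),N(0,\bar\Sigma+\Delta_0))$ by the sum plus $2\|M^{1/2}Z\|_{p,r}$. Then finish with your three-term triangle inequality.
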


Note that $\|H^{1/2} Z\|_{p,r}$ can be bounded using \cref{lem:gaussian-norm-bound} conditionally on $H$.

If there is no almost sure upper bound for $\Sigma-\bar \Sigma$, but we are interested in comparing probabilities, then the error bound will involve $\P(\Sigma-\bar \Sigma\npreceq \Delta_0)$; see the proof of \cref{thm:tail-cor22} in the application to the Yurinskii coupling.

In general, we obtain a bound involving the moments of $\Sigma-\bar \Sigma$ as in the next result.

\begin{theorem}[The case \(\Sigma\neq\bar\Sigma\): two explicit forms]
\label{thm:4}
Let \(p\ge1\), \(r\in[1,\infty]\), and let \(Z\sim N(0,I_d)\) be independent.
Put
\[
 R_{p,r}:=\|S_n\|_{3p,r}+\|\bar\Sigma^{1/2}Z\|_{p,r}.
\]
Then the following bounds hold, with a universal constant \(C\).

\smallskip
\noindent
\textup{(i)} \emph{Elliptic form, corresponding to \cref{cor:2}.}
Assume \(\bar\Sigma\succ0\) and, for some \(0<\alpha\le1\),
\(V_k\succeq(\alpha/n)\bar\Sigma\) almost surely for \(1\le k\le n\).
Then
\begin{equation}\label{eq:thm23-elliptic}
\begin{aligned}
 &\mcl W_{p,r}\bigl(\mcl L(S_n),N(0,\bar\Sigma)\bigr)\\
 &\le\frac{Cp}{\alpha}\sum_{k=1}^n\frac n{n-k+1}
 \left\|\|\xi_k\mid\mathcal F_{k-1}\|_{3p,r}
 \|\bar\Sigma^{-1/2}\xi_k\mid\mathcal F_{k-1}\|_{3p,2}^2\right\|_p\\
 &\quad+C\|Z\|_{p,r}^{2/3}
 \left(R_{p,r}\|\Sigma-\bar\Sigma\|_{3p/2,2}\right)^{1/3}.
\end{aligned}
\end{equation}

\smallskip
\noindent
\textup{(ii)} \emph{Smoothing form, corresponding to \cref{cor:1}.}
Without an ellipticity assumption, and allowing singular \(\bar\Sigma\),
with \(A_{p,r}\) as in \cref{cor:1},
\begin{equation}\label{eq:thm23-smoothing}
 \mcl W_{p,r}\bigl(\mcl L(S_n),N(0,\bar\Sigma)\bigr)
 \le C\|Z\|_{p,r}^{2/3}
 \left(pA_{p,r}+R_{p,r}\|\Sigma-\bar\Sigma\|_{3p/2,2}\right)^{1/3}.
\end{equation}
\end{theorem}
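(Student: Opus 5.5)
The idea is to reduce to the case $\Sigma=\bar\Sigma$ by padding the martingale with extra Gaussian increments. I would then pay for the padding through a smoothing/interpolation argument at scale $\varepsilon$.

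\textbf{Padding.} Let $D:=\Sigma-\bar\Sigma$, which is $\mathcal F_{n-1}$-measurable, and fix a deterministic shift $\lambda\ge 0$, for instance $\lambda=\|D\|_{3p/2,2}$ or a larger level. Put $\Delta_0=\lambda I_d$. The matrix $\lambda I_d-D$ need not be positive semidefinite, so I would not pad with it directly. Instead I would use a stopping-type construction: a predictable truncation $\tilde\xi_k$ that stops the quadratic variation once $\Pi$ would exceed $\bar\Sigma+\lambda I_d$, followed by a final Gaussian increment with conditional covariance $\bar\Sigma+\lambda I_d-\tilde\Sigma\succeq0$. The padded martingale $\tilde S$ then has terminal covariance equal to the deterministic matrix $\bar\Sigma+\lambda I_d$.

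Concretely, I would define a stopped sum $\tilde S_n$, append an independent conditional Gaussian $H^{1/2}Z$, and write
\[
\mathcal W_{p,r}(\mathcal L(S_n),N(0,\bar\Sigma))\le \|S_n-\tilde S_n\|_{p,r}+\mathcal W_{p,r}(\mathcal L(\tilde S_n+H^{1/2}Z),N(0,\bar\Sigma+\lambda I))+\|H^{1/2}Z\|_{p,r}+\|\lambda^{1/2}Z\|_{p,r}.
\]
This is the same bookkeeping as in \cref{thm:2}. The middle term is handled by \cref{thm:3} or \cref{cor:2}; the extra Gaussian step contributes nothing to the Lindeberg sum because it is exactly Gaussian.

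\textbf{Controlling the padding terms.} $\|H^{1/2}Z\|_{p,r}$ is bounded via \cref{lem:gaussian-norm-bound}, conditionally on $H$, by roughly $\|Z\|_{p,r}\,\|\,\|H\|_2^{1/2}\|_p$. Since $\|H\|_2\le\lambda+\|D\|_2$, this term is of order $\|Z\|_{p,r}(\lambda+\|D\|_{p/2,2})^{1/2}$.

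The truncation error $\|S_n-\tilde S_n\|_{p,r}$ is where $R_{p,r}$ enters. On the event $\{\|D\|_2>\lambda\}$ we only know $|S_n-\tilde S_n|_r\le |S_n|_r+|\tilde S_n|_r$. Hölder's inequality with exponents $3$ and $3/2$ then gives
\[
\|S_n-\tilde S_n\|_{p,r}\lesssim R_{p,r}\,\P(\|D\|_2>\lambda)^{1/(3p)}\le R_{p,r}\bigl(\|D\|_{3p/2,2}/\lambda\bigr)^{1/2},
\]
using Markov's inequality at order $3p/2$. Here the $\|\bar\Sigma^{1/2}Z\|_{p,r}$ part of $R_{p,r}$ bounds the Gaussian piece of $\tilde S_n$.

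\textbf{Optimizing.} The two padding terms are $\|Z\|_{p,r}\lambda^{1/2}$ and $R_{p,r}(\|D\|_{3p/2,2}/\lambda)^{1/2}$. Optimizing over $\lambda$ yields a term of order $\|Z\|_{p,r}^{1/2}(R_{p,r}\|D\|_{3p/2,2})^{1/4}$. The stated exponent is $1/3$ rather than $1/4$, which indicates the intended route is different.

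I expect the cleaner route is the one used for \cref{cor:1}. There the $\varepsilon$-smoothing gives a main term $pA/\varepsilon^2$ plus a Gaussian cost $\varepsilon\|Z\|$, and optimizing in $\varepsilon$ produces the $(\cdot)^{1/3}\|Z\|^{2/3}$ shape. To match, I would run the Lindeberg argument of \cref{thm:3} with $M=\varepsilon^2 I$ directly on $S_n+\varepsilon Z$, where $\Pi_k$ is now random. The discrepancy $D$ then enters only through the final comparison of $N(0,\Sigma+\varepsilon^2 I)$, conditionally, with $N(0,\bar\Sigma+\varepsilon^2 I)$. This comparison would be made via a coupling whose cost is controlled by $\|(\Sigma+\varepsilon^2)^{1/2}-(\bar\Sigma+\varepsilon^2)^{1/2}\|\lesssim\|D\|/\varepsilon$. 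Cross terms between $S_n$ and this covariance error, obtained by Hölder with $3p$ and $3p/2$, would produce $R_{p,r}\|D\|_{3p/2,2}/\varepsilon^2$. Balancing $(pA_{p,r}+R_{p,r}\|D\|_{3p/2,2})/\varepsilon^2$ against $\varepsilon\|Z\|_{p,r}$ gives \eqref{eq:thm23-smoothing}.

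For (i), I would keep $M=0$ in the Lindeberg sum to obtain the elliptic sum exactly as in \cref{cor:2}. Only the final covariance correction would be smoothed at scale $\varepsilon$, giving the same optimized $1/3$-power term.

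\textbf{Main obstacle.} The hard step is obtaining the $D$-error with a factor $1/\varepsilon^2$ and the moment $\|D\|_{3p/2,2}$ paired with $\|S_n\|_{3p,r}$. This requires redoing the last Lindeberg step, or the Bonis-type expansion \eqref{eq:bonis}, with a random target covariance, and tracking the first-order term $\langle D,\nabla^2\rangle$ weighted against $S_n$. I would attempt it by writing the target as a conditional Gaussian given $\mathcal F_{n-1}$ and applying \cref{lem:2} to compare conditional expectations under the two covariances.
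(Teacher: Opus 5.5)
\textbf{There is a genuine gap: the route you adopt does not work, and the route you discard is the paper's proof, abandoned because of an exponent slip.}

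\textbf{The discarded first route works.} Your stopped martingale plus terminal Gaussian padding $H_\tau^{1/2}Z_0$, with the four-term triangle inequality, is exactly the paper's argument. Take $H_k=\bar\Sigma+\gamma^2I_d-\sum_{i\le k}V_i$ and $\tau=\max\{k:H_k\succeq 0\}$. H\"older with exponents $3$ and $3/2$ gives $\|X\mathbf 1_A\|_p\le\|X\|_{3p}\,\P(A)^{2/(3p)}$, not $\P(A)^{1/(3p)}$. Markov at order $3p/2$ then gives $\P(\|D\|_2>\gamma^2)^{2/(3p)}\le\gamma^{-2}\|D\|_{3p/2,2}$. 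By optional sampling, $S_\tau=\E[S_n\mid\mathcal F_\tau]$, so $\|S_n-S_\tau\|_{3p,r}\le 2\|S_n\|_{3p,r}$. Hence the truncation error is at most $2\gamma^{-2}\|S_n\|_{3p,r}\|D\|_{3p/2,2}$. Balancing this against $\gamma\|Z\|_{p,r}$ gives exactly $C\|Z\|_{p,r}^{2/3}(R_{p,r}\|D\|_{3p/2,2})^{1/3}$, not a $1/4$ power.

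\textbf{The adopted second route fails.} When $\Sigma$ is random, $\Pi_k=\sum_{i\ge k}V_i$ is not $\mathcal F_{k-1}$-measurable. Moreover, $T_{k+1}=\sum_{i>k}V_i^{1/2}Z_i$ is not conditionally independent of $\xi_k$ given $\mathcal F_{k-1}$, because the later $V_i$ depend on $\xi_k$. So \cref{prop:1} cannot be applied conditionally at the Lindeberg steps $k<n$. The randomness of $\Sigma$ contaminates the whole telescoping sum, not just a final comparison of $N(0,\Sigma+\varepsilon^2I_d)$ with $N(0,\bar\Sigma+\varepsilon^2I_d)$. The claimed $R_{p,r}\|D\|_{3p/2,2}/\varepsilon^2$ cross term is never derived. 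Making the remaining covariance predictable is precisely what the stopping construction is for.

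\textbf{What the first route still needs.} Three further details are required.

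First, your bound $\|H\|_2\le\lambda+\|D\|_2$ holds only on $\{\tau=n\}$, where $H_\tau=\gamma^2I_d-D$. On $\{\tau<n\}$ you only know $0\preceq H_\tau\preceq\bar\Sigma+\gamma^2I_d$. This is where $\|\bar\Sigma^{1/2}Z\|_{p,r}$ enters $R_{p,r}$, multiplied by $\P(\tau<n)^{1/p}\le\P(\tau<n)^{2/(3p)}\le\gamma^{-2}\|D\|_{3p/2,2}$. It does not enter through the truncation error.

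Second, the leftover term $\|Z\|_{p,r}(\E\|D\|_2^{p/2})^{1/p}\le\|Z\|_{p,r}\|D\|_{3p/2,2}^{1/2}$ is not of the stated form, so it needs a case split.
If $\|Z\|_{p,r}\|D\|_{3p/2,2}^{1/2}\le R_{p,r}$, the term is at most $\|Z\|_{p,r}^{2/3}(R_{p,r}\|D\|_{3p/2,2})^{1/3}$.
Otherwise, the independent coupling gives $\mcl W_{p,r}\le R_{p,r}\le\|Z\|_{p,r}^{2/3}(R_{p,r}\|D\|_{3p/2,2})^{1/3}$.

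Third, for (i) the stopping rule must be $\tau_\alpha=\max\{k:H_k\succeq\alpha(n-k)\bar\Sigma/n\}$. Then on $\{k\le\tau_\alpha\}$ the padded predictable covariance satisfies $H_{k-1}\succeq\alpha(n-k+1)\bar\Sigma/n$, so the weights $n/(n-k+1)$ of \cref{cor:2} survive. Stopping merely when $H_k\not\succeq0$ only yields $H_{k-1}\succeq V_k$. By monotonicity one still has $\{\tau_\alpha<n\}=\{\gamma^2I_d-D\not\succeq0\}$, so the probability estimate is unchanged.

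For (ii), run the $M=\sigma^2I_d$ argument of \cref{cor:1} on the stopped martingale, using that predictable stopping does not increase $A_{p,r}$.
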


In applying \cref{thm:4}, $\|S_n\|_{3p,r}$ may be bounded using the standard Rosenthal--Burkholder inequality. We record such an inequality in \cref{lem:4}.

\section{Applications}\label{sec:app}

In this section, we deduce the Yurinskii coupling and Cram\'er-type moderate deviation results from the $p$-Wasserstein bounds from \cref{sec:main}. We also provide an illustrative application to the stochastic gradient descent algorithm. The proofs of the applications are given in \cref{sec:proofapp}.

\subsection{The Yurinskii coupling}\label{sec:yurinskii}

Yurinskii’s coupling (\cite{yurinskii1978error}) has proven to be an important theoretical tool for developing
non-asymptotic distributional approximations in mathematical statistics and applied probability.
Given a sequence of random vectors $S_n$, the Yurinskii coupling takes the following form:
for each $n\geq 1$, $r\in [1,\infty]$ and $\eta>0$, there exists a Gaussian random vector $T_n(\eta)$ with 
\ben{\label{eq:defYurinskii}
\P(|S_n-T_n(\eta)|_r>\eta)\leq r_n(\eta),
}
where
$r_n(\eta)$ is a vanishing approximation error in certain asymptotic region.
Most of the literature on the Yurinskii coupling deals with the case in which $S_n$ is a sum of independent random vectors.
\cite{li2020uniform} and \cite{cattaneo2025yurinskii} considered the case in which $S_n$ is a martingale.
We refer to \cite{cattaneo2025yurinskii} for more references on the Yurinskii coupling.

As remarked in \cite[Remark~2.1]{cattaneo2025yurinskii}, each coupling variable $T_n(\eta)$ in \cref{eq:defYurinskii} depends on $\eta$, and, as such, \cref{eq:defYurinskii} cannot be used to deduce a bound on $\|S_n-T_n\|_{p,r}$. 
In contrast, one may deduce \cref{eq:defYurinskii} from a Wasserstein bound and Markov's inequality.
We obtain the following consequence of our $p$-Wasserstein bounds.

\begin{theorem}
\label{thm:tail-cor22}
Retain the martingale setup and notation of Section~2.2, and do not assume
$\Sigma=\bar\Sigma$ a.s. Put
\[
        \Delta:=\Sigma-\bar\Sigma .
\]
Let $A_{p,r}$ be as in Corollary~\ref{cor:1}. 
Then, for every $p\ge1$, $r\in[1,\infty]$, and $\eta>0$, there exists, on an
extension of the original probability space, a random vector
$T:=T(\eta)\sim N(0,\bar\Sigma)$ such that
\besn{\label{eq:thm3.1}
\mathbb P\left(|S_n-T|_r>3\eta\right)
&\le
\left[
\frac{
C(pA_{p,r})^{1/3}\|Z\|_{p,r}^{2/3}
}{\eta}
\right]^p                                                        \\
&\quad+
C\left(
\frac{\mathbb E\|\Delta\|_2\,\|Z\|_{1,r}^{2}}{\eta^2}
\right)^{1/3},
}
where $Z\sim N(0,I_d)$ and $C$ is a universal constant.
\end{theorem}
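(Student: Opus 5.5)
The plan is to reduce to the case $\Sigma=\bar\Sigma$ by modifying the martingale, in the spirit of the proof of \cref{thm:2}. We work on the event where $\Delta$ is small: fix a threshold $\delta>0$, to be optimized later, and set $\Delta_0:=\delta I_d$. On the event $\{\|\Delta\|_2\le\delta\}$ we have $\Sigma-\bar\Sigma\preceq\Delta_0$. We cannot apply \cref{thm:2} directly, since the assumption fails off this event. Instead we truncate the martingale with a stopping time and then append Gaussian increments, so that the augmented martingale has deterministic terminal quadratic variation $\bar\Sigma+\Delta_0$.

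\textbf{Step 1: stopping.} Let $\tau$ be the first $k$ such that $\sum_{i\le k+1}V_i\npreceq\bar\Sigma+\Delta_0$. This is a stopping time, because $V_{k+1}$ is $\mathcal F_k$-measurable. Set $\tilde\xi_k:=\xi_k\mathbf 1\{k\le\tau\}$. Then $\sum_k\tilde V_k\preceq\bar\Sigma+\Delta_0$ almost surely, and $\tilde S_n=S_n$ on $\{\tau\ge n\}\supseteq\{\|\Delta\|_2\le\delta\}$.

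\textbf{Step 2: appending Gaussian increments.} Add a final increment $\xi_{n+1}:=(\bar\Sigma+\Delta_0-\sum_k\tilde V_k)^{1/2}Z'$, with $Z'$ independent. Its conditional covariance makes the total deterministic, equal to $\bar\Sigma+\Delta_0$. \cref{thm:3}, applied with $M$ chosen as in \cref{cor:1} (or \cref{cor:1} itself, since a conditionally Gaussian step contributes a term that can be absorbed, or else is treated exactly by the convolution in \cref{prop:1}), gives a coupling $T'\sim N(0,\bar\Sigma+\Delta_0)$ with
\[
\|\tilde S_n+\xi_{n+1}-T'\|_{p,r}\le C(pA_{p,r})^{1/3}\|Z\|_{p,r}^{2/3}.
\]
Here $A_{p,r}$ for $\tilde\xi$ is bounded by that for $\xi$. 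Write $T'=T+\Delta_0^{1/2}Z''$ with $T\sim N(0,\bar\Sigma)$; this decomposition is realizable on an extension by gluing.

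\textbf{Step 3: assembling the tail bound.} By Markov's inequality,
\[
\mathbb P\bigl(|S_n-T|_r>3\eta\bigr)\le \mathbb P\bigl(|\tilde S_n+\xi_{n+1}-T'|_r>\eta\bigr)+\mathbb P\bigl(|\xi_{n+1}|_r+|\Delta_0^{1/2}Z''|_r>2\eta\bigr)+\mathbb P\bigl(\|\Delta\|_2>\delta\bigr).
\]
The first term gives the $p$-th power term in \cref{eq:thm3.1}. For the second, conditionally $\xi_{n+1}$ is Gaussian with covariance $H\preceq(\delta+\|\Delta\|_2)I$ on the good event; more crudely, $H\preceq 2\delta I$ up to using $\tau$. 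So $\mathbb E|\xi_{n+1}|_r\le C\sqrt\delta\,\|Z\|_{1,r}$, using $\|H^{1/2}Z\|_{1,r}\le\|H\|_2^{1/2}\|Z\|_{1,r}$ via the Gaussian comparison $H\preceq cI$ (Anderson's inequality). Markov's inequality then bounds this term by $C\sqrt\delta\,\|Z\|_{1,r}/\eta$. The third term is at most $\mathbb E\|\Delta\|_2/\delta$.

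\textbf{Step 4: optimizing $\delta$.} Balancing $\sqrt\delta\,\|Z\|_{1,r}/\eta$ against $\mathbb E\|\Delta\|_2/\delta$ gives $\delta^{3/2}=\eta\,\mathbb E\|\Delta\|_2/\|Z\|_{1,r}$. The resulting value is
\[
\bigl(\mathbb E\|\Delta\|_2\,\|Z\|_{1,r}^2/\eta^2\bigr)^{1/3},
\]
which is exactly the second term of \cref{eq:thm3.1}.

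The main obstacle is Step 1/2: the bound $H\preceq 2\delta I$ must hold for the stopped process. Since $\sum_k\tilde V_k$ could be much smaller than $\bar\Sigma$ when stopping occurs early, $H$ may be large. This only happens on $\{\tau<n\}\subseteq\{\Sigma\npreceq\bar\Sigma+\Delta_0\}$, so the plan is to intersect with the good event. On that event $H=\Delta_0-\Delta$, and $\|H\|_2\le2\delta$. The first term, together with $\mathbb P(\|\Delta\|_2>\delta)$, absorbs the rest.
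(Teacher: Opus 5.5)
Your proposal is correct and follows the paper's proof essentially step for step. Your $\tau$ is the paper's $\tau_0$ from the proof of \cref{thm:4} with $\gamma^2=\delta$, and the appended increment $H_\tau^{1/2}Z'$ has Lindeberg swap error exactly zero; the valid justification is your ``treated exactly'' option, not absorption into $A_{p,r}$. The gluing $T'=T+\sqrt{\delta}\,Z''$, the Markov bounds and the optimization in $\delta$ also match the paper. The one difference is in your favour: you bound $\xi_{n+1}$ on $\{\|\Delta\|_2\le\delta\}$, where $\|H\|_2\le 2\delta$, rather than on $\{\tau=n\}$, where only $\|H\|_2\le\delta+\|\Delta\|_2$ is available. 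This avoids the paper's extra term $\|Z\|_{1,r}(\mathbb E\|\Delta\|_2)^{1/2}/\eta$ and the need to absorb it into the cube-root term at the end.
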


\begin{remark}
As remarked in \cref{rem:typical16}, the first term on the right-hand side of \cref{eq:thm3.1} is of order $\sim (n^{-1/6}\eta^{-1})^p$ under suitable moment conditions, while \cite[Proposition~2.1]{cattaneo2025yurinskii} had the bound corresponding to $p=1$. Therefore, our error bound may be smaller under higher-moment assumptions.
\end{remark}

\subsection{Cram\'er-type moderate deviations}

Moderate deviations date back to
\cite{Cramer1938} who obtained expansions for tail probabilities for sums of independent
 random variables about the normal distribution. For independent and identically distributed (i.i.d.) random
 variables $X_1, \cdots, X_n$ with $\E X_1=0$ and $\Var(X_1)= 1$ such that $\E e^{|X_1|/b}\leq C < \infty$ for some $b>0$, it follows from \cite[Chapter~8, Eq.~(2.41)]{petrov2012sums} that
 \be{
 \left| \frac{\P(W>x)}{\P(Z>x)}-1\right|=O(1) (1+x^3)/\sqrt{n}
}
 for $0\leq x\leq O(1) n^{1/6}$, where $W=(X_1+\cdots +X_n)/ \sqrt{n}$, $Z\sim N(0,1)$ and $O(1)$ is bounded by a constant that depends on $b$ and $C$. 
 The range $0\leq x\leq O(1) n^{1/6}$ and the order of the error term $O(1) (1+x^3)/\sqrt{n}$ are optimal. \cite{von1967multi} obtained a multi-dimensional generalization of the result of \cite{Cramer1938} for sums of independent random vectors.
 In contrast, while one-dimensional moderate deviation results have been actively studied for the martingale CLT (see \cite{fan2024cramer} and the references therein), multi-dimensional results have not been available. Using our results in \cref{sec:main}, combined with the approach of \cite{fang2023p}, we fill in this gap in the literature. In particular, using \cref{cor:2} and \cref{thm:2}, we obtain the following result.


\begin{theorem}\label{thm:MDmartingale}
Retain the martingale setup and notation of \cref{sec:main}, and assume $\bar \Sigma=I_d$, $|\xi_k|\leq K/\sqrt{n}$, $-\frac{\gamma^2}{n} I_d\preceq \Sigma-\bar \Sigma\preceq \frac{\gamma^2}{n} I_d$, and, for some constant $0<\alpha\le1$,
\[
        \frac{\alpha}{n}\Sigma \preceq V_k,
        \qquad 1\le k\le n,
\]
almost surely. Then, we have, with $\delta=\log(en)/\sqrt{n}$,
\ben{\label{eq:thmMD}
\left|\frac{\P(|S_n|>x)}{\P(|Z|>x)}-1\right|\leq C(1+x)(1+|\log\delta|+x^2)\delta
}
for all $0\leq x\leq \delta^{-1/3}$,
where \(Z\sim N(0,I_d)\) and $C$ is a constant depending only on $K$, $\alpha$, $\gamma$ and $d$.

\end{theorem}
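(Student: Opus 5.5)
The plan is to follow the approach of \cite{fang2023p}: convert a $p$-Wasserstein bound, valid for every $p\ge1$ with constant linear in $p$, into a relative tail bound by optimizing over $p$. There are two steps. First, obtain $\mcl W_{p,2}(\mcl L(S_n),N(0,I_d))\le Cp\,\delta$ for all $p\ge1$, with $C$ depending on $K,\alpha,\gamma,d$. Second, show that such a family of bounds implies \cref{eq:thmMD}.

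\textbf{Step 1.} We cannot apply \cref{cor:2} directly, because $\Sigma\neq\bar\Sigma$. Instead we use \cref{thm:2} with $\Delta_0=\frac{\gamma^2}{n}I_d$ and $M=0$.
\begin{itemize}
\item Since $V_k\succeq\frac{\alpha}{n}\Sigma\succeq\frac{\alpha}{n}(1-\gamma^2/n)I_d$, we get $\Pi_k\succeq \frac{\alpha(n-k+1)}{2n}I_d$ for $n\ge 2\gamma^2$. Small $n$ are absorbed into the constant, since then the right side of \cref{eq:thmMD} is of order one.
\item Together with $|\xi_k|\le K/\sqrt n$, each summand in \cref{thm:2} is at most $\frac{K^3}{n^{3/2}}\cdot\frac{2n}{\alpha(n-k+1)}$. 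Summing over $k$ gives $Cp K^3\log(en)/(\alpha\sqrt n)$.
\item The extra terms are $\|H^{1/2}Z\|_{p,2}+\|\Delta_0^{1/2}Z\|_{p,2}$. Since $0\preceq H\preceq 2\gamma^2 n^{-1}I_d$, \cref{lem:gaussian-norm-bound}, applied conditionally on $H$, bounds them by $C\gamma(\sqrt d+\sqrt p)/\sqrt n\le Cp\,\delta$.
\end{itemize}
So $\mcl W_{p,2}\le C_0p\delta$ for all $p\ge1$.

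\textbf{Step 2.} Couple $S_n$ and $Z$ optimally for a given $p$. The triangle inequality gives, for $0<\epsilon$,
\[
\P(|S_n|>x)\le \P(|Z|>x-\epsilon)+\P(|S_n-Z|>\epsilon)\le \P(|Z|>x-\epsilon)+(C_0p\delta/\epsilon)^p .
\]
The lower bound is symmetric, using $\P(|Z|>x+\epsilon)$.
\begin{itemize}
\item Take $\epsilon=\delta(1+|\log\delta|+x^2)/(1+x)$ times a large constant.
\item For the chi-type tail we have $\P(|Z|>x\pm\epsilon)/\P(|Z|>x)=1+O((1+x)\epsilon)$, provided $x\epsilon\lesssim1$. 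This holds in the range $x\le\delta^{-1/3}$, since then $x\epsilon\lesssim \delta^{1/3}+\delta|\log\delta|$. This produces the main error $C(1+x)\epsilon$, which is the claimed order.
\item Choose $p\approx 1+|\log\delta|+x^2$ and make $C_0p\delta/\epsilon\le e^{-1}$. Then the remainder $e^{-p}\lesssim \delta\, e^{-x^2}$ is negligible relative to $\P(|Z|>x)\gtrsim e^{-x^2/2}(1+x)^{d-2}$ times the error.
\end{itemize}

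\textbf{Main obstacle.} The delicate part is tuning the constants. We need $C_0 p\delta/\epsilon\le e^{-1}$, that is $\epsilon\ge eC_0p\delta$, which with $p\sim 1+|\log\delta|+x^2$ forces $\epsilon\sim\delta(1+|\log\delta|+x^2)$. Note this has no division by $(1+x)$. The resulting error is then $(1+x)\epsilon=(1+x)(1+|\log\delta|+x^2)\delta$, which is exactly the right side of \cref{eq:thmMD}. So the correct choice is $\epsilon$ of that size, and the condition $x\epsilon\lesssim1$ on $x\le\delta^{-1/3}$ should be checked again with this $\epsilon$. It holds because $x^3\delta\le1$ and $x|\log\delta|\delta\to0$. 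Where $x\epsilon$ is merely bounded, the bound is trivially true after enlarging $C$.
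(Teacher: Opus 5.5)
Your proposal is correct and follows the paper's proof. Step 1 is exactly the paper's application of \cref{thm:2} with $M=0$ and $\Delta_0=\gamma^2I_d/n$: the bound $\Pi_k\succeq\frac{\alpha(n-k+1)}{2n}I_d$ for $n\ge 2\gamma^2$ and Gaussian-augmentation terms of order $\gamma\sqrt p/\sqrt n$ give $\mcl W_{p,2}\le Cp\delta$. Step 2 writes out the Markov-inequality and Gaussian-tail argument that the paper imports by citing \cite[Theorem~4.2]{fang2023p}, provided you use the choice $p\asymp 1+|\log\delta|+x^2$, $\epsilon\asymp(1+|\log\delta|+x^2)\delta$ from your final paragraph rather than the $\epsilon$ divided by $1+x$ in your bullet.
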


\begin{remark}
By comparison with the one-dimensional result, the scale $x=o(\delta^{-1/3})$ on which the relative error on the left-hand side of \cref{eq:thmMD} vanishes is optimal up to the $\log n$ factor.
The conditions in \cref{thm:MDmartingale} can be relaxed using other results in \cref{sec:main}. However, the convergence rate will be worse.
\end{remark}

\subsection{Stochastic approximation}

Recently, martingale CLTs with nonasymptotic error bounds have been actively applied to stochastic approximation algorithms; see \cite{anastasiou2019normal}, \cite{srikant2025rates}, \cite{wu2025uncertainty}, and \cite{kong2026finite}.

As a classical case, we consider the Polyak--Ruppert averaged stochastic gradient descent (SGD) algorithm. Fix $p\geq1$. Throughout this subsection, we work with the Euclidean norm ($r=2$). Let $f:\mathbb R^d\to\mathbb R$ be twice continuously differentiable, let $(\mathcal F_k)_{k\geq0}$ be a filtration, and assume that $\theta_0$ is $\mathcal F_0$-measurable. Consider the recursion
\[
    \theta_k
    =
    \theta_{k-1}
    -
    \eta_k\{\nabla f(\theta_{k-1})+\varepsilon_k\},
    \qquad k\geq1,
\]
where $\theta_k\in\mathbb R^d$, $\eta_k=\eta_0k^{-\alpha}$ with
$\eta_0>0$ and $\alpha\in(1/2,1)$, and each $\varepsilon_k$ is
$\mathcal F_k$-measurable with
\[
    \mathbb E(\varepsilon_k\mid\mathcal F_{k-1})=0.
\]
Let $\theta^*$ be the unique minimizer of $f$, and set
\[
    \Delta_k=\theta_k-\theta^*,
    \qquad
    \bar\theta_n=\frac1n\sum_{k=0}^{n-1}\theta_k,
    \qquad
    H=\nabla^2f(\theta^*).
\]

We assume that, for some constants $0<\mu\leq L<\infty$ and $L_H<\infty$,
\begin{equation}\label{eq:sgd-regularity}
    \mu I_d
    \preceq
    \nabla^2f(\theta)
    \preceq
    LI_d,
    \qquad
    \|\nabla^2f(\theta)-\nabla^2f(\theta')\|_2
    \leq
    L_H|\theta-\theta'|_2
\end{equation}
for all $\theta,\theta'\in\mathbb R^d$, and that $\eta_0L\leq1/2$ and
$\|\Delta_0\|_{2p}<\infty$. The noise satisfies
\begin{equation}\label{eq:sgd-moment}
    \|\varepsilon_k\|_{3p,2}\leq K_{3p}<\infty,\qquad k\geq1.
\end{equation}
Assume in addition that, for a deterministic symmetric matrix $V\succ0$,
\begin{equation}\label{eq:sgd-fixed-covariance}
    \mathbb E[\varepsilon_k\varepsilon_k^\top\mid\mathcal F_{k-1}]
    =V\quad\text{a.s.},\qquad k\geq1.
\end{equation}

As an application of \cref{thm:3}, we obtain the following result.
\begin{proposition}\label{prop:SGD}
Under the above assumptions, for every $n\geq2$,
\begin{equation}\label{eq:sgd-limit-bound}
\begin{aligned}
 &\mcl W_{p,2}\!\left(
 \mcl L\bigl(\sqrt n(\bar\theta_n-\theta^*)\bigr),
 \N(0,H^{-1}VH^{-1})\right)\\
 &\qquad\leq C\left\{
 \frac{\log(en)}{\sqrt n}
 +n^{1/2-\alpha}+n^{\alpha-1}\right\}.
\end{aligned}
\end{equation}
Here $C$ depends only on $p,d,\alpha,\eta_0,\mu,L,L_H,K_{3p}$,
$\|V^{-1}\|_2$, and $\|\Delta_0\|_{2p}$.
In particular, the bound is $Cn^{-1/4}$ when $\alpha=3/4$.
\end{proposition}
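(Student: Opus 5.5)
We decompose $\sqrt n(\bar\theta_n-\theta^*)$ into a linear martingale part plus remainders, in the standard Polyak--Juditsky way, and apply \cref{thm:3} to the martingale part. Write $\nabla f(\theta_{k-1})=H\Delta_{k-1}+R_{k-1}$ with $|R_{k-1}|_2\le L_H|\Delta_{k-1}|_2^2/2$. The recursion gives
\[
H\Delta_{k-1}=\eta_k^{-1}(\Delta_{k-1}-\Delta_k)-\varepsilon_k-R_{k-1}.
\]
Summing over $k=1,\dots,n$ and applying summation by parts to $\sum_k\eta_k^{-1}(\Delta_{k-1}-\Delta_k)$ yields
\[
\sqrt n(\bar\theta_n-\theta^*)=-\frac{1}{\sqrt n}\sum_{k=1}^n H^{-1}\varepsilon_k+\mathcal E_n .
\]
Here $\mathcal E_n$ collects three pieces: the boundary terms $n^{-1/2}H^{-1}(\eta_1^{-1}\Delta_0-\eta_n^{-1}\Delta_n)$, the telescoping piece $n^{-1/2}\sum_k(\eta_k^{-1}-\eta_{k-1}^{-1})H^{-1}\Delta_{k-1}$, and the nonlinearity $n^{-1/2}\sum_k H^{-1}R_{k-1}$.

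By the triangle inequality for $\mcl W_{p,2}$ and the trivial coupling,
\[
\mcl W_{p,2}\bigl(\mcl L(\sqrt n(\bar\theta_n-\theta^*)),N(0,H^{-1}VH^{-1})\bigr)\le \mcl W_{p,2}\bigl(\mcl L(S_n),N(0,H^{-1}VH^{-1})\bigr)+\|\mathcal E_n\|_{p,2},
\]
where $S_n=\sum_k\xi_k$ with $\xi_k=-n^{-1/2}H^{-1}\varepsilon_k$.

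\emph{Martingale term.} By \cref{eq:sgd-fixed-covariance}, $V_k=n^{-1}H^{-1}VH^{-1}$ is deterministic. Hence $\Sigma=\bar\Sigma=H^{-1}VH^{-1}$, so the hypotheses of \cref{thm:3} hold; moreover $V_k=\bar\Sigma/n$, so \cref{cor:2} applies with $\alpha=1$. The moment bound $\|\varepsilon_k\|_{3p,2}\le K_{3p}$ is not conditional, so we use \cref{eq:cor2.1-1} directly. Since $V_k$ is deterministic, it is cleaner to work with $\Pi_k=\frac{n-k+1}{n}\bar\Sigma$ in \cref{thm:3} with $M=0$. The $k$-th summand is then
\[
\frac{n}{n-k+1}\Bigl\|\|\xi_k\mid\mathcal F_{k-1}\|_{3p,2}^3\Bigr\|_p\cdot\|\bar\Sigma^{-1}\|_2 .
\]
By Jensen, $\bigl\|\|\xi_k\mid\mathcal F_{k-1}\|_{3p}^3\bigr\|_p\le\|\xi_k\|_{3p}^3$, since $\E[(\E[|\xi|^{3p}\mid\mathcal F])^{1}]=\E|\xi|^{3p}$; this is exactly an equality. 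Also $\|\xi_k\|_{3p}\le n^{-1/2}\mu^{-1}K_{3p}$ and $\|\bar\Sigma^{-1}\|_2\le L^2\|V^{-1}\|_2$. Summing $\sum_k\frac{1}{n-k+1}\cdot n^{-1/2}$ gives $C\log(en)/\sqrt n$.

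\emph{Remainders.} These require $L^{p}$ (and $L^{2p}$) moment bounds for the iterates, namely
\[
\|\Delta_k\|_{2p,2}\le C\eta_k^{1/2}=Ck^{-\alpha/2}.
\]
We prove this by the standard contraction argument. With $\eta_0L\le1/2$ and strong convexity, $\Delta_k=(I-\eta_k A_k)\Delta_{k-1}-\eta_k\varepsilon_k$, where $A_k=\int_0^1\nabla^2 f$ along the segment has spectrum in $[\mu,L]$, so $\|I-\eta_kA_k\|_2\le1-\eta_k\mu$. Expanding $|\Delta_k|^{2p}$ is messy. Instead we bound the martingale increments via the Burkholder inequality (\cref{lem:4}) applied to the unrolled recursion
\[
\Delta_k=\prod_j(I-\eta_jA_j)\Delta_0-\sum_j\Bigl(\prod_{i>j}(I-\eta_iA_i)\Bigr)\eta_j\varepsilon_j .
\]
The products are random, which breaks the martingale structure, so we would first linearize around $H$: take $\Delta_k=(I-\eta_kH)\Delta_{k-1}-\eta_k\varepsilon_k-\eta_kR_{k-1}$ as the deterministic-matrix recursion and treat $R$ by a crude bound. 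A cleaner option is a Lyapunov recursion for $\E|\Delta_k|^{2p}$, using the expansion $|a-\eta_k b|^{2p}$ together with the conditional mean zero of $\varepsilon_k$, as in the standard $L^{2p}$ SGD analyses. \textbf{This moment step is the main obstacle}; it needs only $2p\le3p$ noise moments.

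Given the iterate bound, the three remainder pieces are handled as follows.
\begin{itemize}
\item The boundary terms are $O(n^{-1/2}(1+n^{\alpha}n^{-\alpha/2}))=O(n^{\alpha/2-1/2})$, which is $\le n^{1/2-\alpha}+n^{\alpha-1}$ by AM--GM on the exponents.
\item The telescoping piece has $\eta_k^{-1}-\eta_{k-1}^{-1}\asymp k^{\alpha-1}$, so it is bounded by $n^{-1/2}\sum_k k^{\alpha-1}k^{-\alpha/2}\lesssim n^{\alpha/2-1/2}$, the same order.
\item The nonlinear piece gives $n^{-1/2}\sum_k\|\Delta_{k-1}\|_{2p}^2\lesssim n^{-1/2}\sum_k k^{-\alpha}\lesssim n^{1/2-\alpha}$.
\end{itemize}
Here $\|R\|_p\le C\|\Delta\|_{2p}^2$. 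Sharper handling of the boundary piece (e.g.\ $n^{-1/2}\eta_n^{-1}\|\Delta_n\|$) yields $n^{\alpha/2-1/2}$. That is always $\le \max(n^{1/2-\alpha},n^{\alpha-1})$, since $\alpha/2-1/2$ is the average of the two exponents. Combining everything gives \cref{eq:sgd-limit-bound}, and at $\alpha=3/4$ both power terms equal $n^{-1/4}$.
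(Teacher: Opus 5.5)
\textbf{There is a genuine gap in your remainder estimate.} Your boundary term $n^{-1/2}\eta_n^{-1}H^{-1}\Delta_n$ and your triangle-inequality bound on the telescoping piece are both of order $n^{(\alpha-1)/2}$. You dispose of this by claiming that $(\alpha-1)/2$ is the average of the exponents $1/2-\alpha$ and $\alpha-1$, but that average is $-1/4$. In fact $(\alpha-1)/2\le\max\{1/2-\alpha,\alpha-1\}$ holds only for $\alpha\le 2/3$. So your argument proves \cref{eq:sgd-limit-bound} only for $\alpha\in(1/2,2/3]$; at $\alpha=3/4$ it gives $n^{-1/8}$, not $n^{-1/4}$.

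Sharper estimates of $\mathcal E_n$ cannot fix this, because under the identity coupling with $S_n=-n^{-1/2}H^{-1}\sum_k\varepsilon_k$ the remainder really is that large. Already for quadratic $f$,
\[
\mathcal E_n=n^{-1/2}A_n\Delta_0-n^{-1/2}\sum_{k<n}(B_k^n-H^{-1})\varepsilon_k+n^{-1/2}H^{-1}\varepsilon_n,
\]
with $A_n,B_k^n$ as in \cref{lem:sgd-auxiliary}. For the last indices $n-k\lesssim n^{\alpha}$ the increments have not yet been averaged: $B_k^n\approx H^{-1}\{I_d-(I_d-\eta_nH)^{n-k}\}$, so $\|B_k^n-H^{-1}\|_2$ is bounded below there. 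Orthogonality of martingale increments then gives $\E|\mathcal E_n|_2^2\ge n^{-1}\lambda_{\min}(V)\sum_{k<n}\|B_k^n-H^{-1}\|_2^2\gtrsim n^{\alpha-1}$. This is the same phenomenon as $(\E|\Delta_n|_2^2)^{1/2}\asymp\sqrt{\eta_n}$ in your boundary term.

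\textbf{The missing idea}, which is what the paper does, is to keep the exact finite-$n$ weights in the martingale and pay for $B_k^n\neq H^{-1}$ only on the Gaussian side.
\begin{itemize}
\item Write $\sqrt n(\bar\theta_n-\theta^*)=T_n+\mathcal R_n$ with $T_n=-n^{-1/2}\sum_{k<n}B_k^n\varepsilon_k$. Then $\mathcal R_n$ contains only $n^{-1/2}A_n\Delta_0$ and the Hessian nonlinearity, so your moment bound gives $\|\mathcal R_n\|_{p,2}\le C(n^{-1/2}+n^{1/2-\alpha})$.
\item $T_n$ still has deterministic predictable covariance, so \cref{thm:3} applies with $M=0$. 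Its tails $\Pi^B_{k,n}$ are no longer multiples of the total, so \cref{cor:2} is unavailable. Instead, using $I_d\preceq\|V^{-1}\|_2V$, bound the $k$-th summand by $Cn^{-1/2}\operatorname{tr}[(\Pi^B_{k,n})^{-1}(\Pi^B_{k,n}-\Pi^B_{k+1,n})]$. Then telescope via $\operatorname{tr}\{A^{-1}(A-B)\}\le\log\det A-\log\det B$, which again yields $C\log(en)/\sqrt n$.
\item Finally, compare $N(0,\Gamma_n^{\mathrm{PR}})$ with $N(0,H^{-1}VH^{-1})$ through covariances: $\|\Gamma_n^{\mathrm{PR}}-H^{-1}VH^{-1}\|_2\le Cn^{-1}\sum_k\|B_k^n-H^{-1}\|_2\le Cn^{\alpha-1}$. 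The matrix square root is Lipschitz there because $H^{-1}VH^{-1}\succeq(L^2\|V^{-1}\|_2)^{-1}I_d$.
\end{itemize}
The gain is that the covariance comparison is linear in the weight defects, $n^{-1}\sum_k\|B_k^n-H^{-1}\|_2=O(n^{\alpha-1})$. Your coupling instead pays $(n^{-1}\sum_k\|B_k^n-H^{-1}\|_2^2)^{1/2}\asymp n^{(\alpha-1)/2}$, which is dominated by the roughly $n^{\alpha}$ unaveraged terms. The rest of your argument is sound: the uniformly elliptic martingale step via \cref{cor:2}, the Lyapunov-type iterate moment bound, and the nonlinear piece.
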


\begin{remark}
Under related assumptions, \cite{anastasiou2019normal} obtain
smooth-test-function bounds for averaged SGD, whereas
\cite[Theorem~3.4]{shao2022berry} and
\cite[Theorems~2 and~4]{sheshukova2026gaussian} give convex-distance
approximations; the latter work also obtains an $n^{-1/4}$ rate for the limiting-covariance target at step-size exponent $3/4$.
We instead obtains a bound on $\mcl W_{p,2}$.
These results are complementary.
\end{remark}

\section{Proofs of the main results}\label{sec:proof}

In this section, we prove the main results stated in \cref{sec:main}. We defer some lemmas and their proofs to \cref{sec:lem}. In all the proofs, we use $C$ to denote universal constants, whose value may differ in different expressions. Also, we assume that the moments appearing below are all finite; otherwise, the desired bounds are trivial.

\begin{proof}[Proof of \cref{prop:1}]
We first prove the case $\Sigma-V\succ0$. Put
\[
V_1:=V,\qquad V_2:=\Sigma-V,\qquad \xi_\Sigma:=\Sigma^{-1/2}\xi.
\]
On an extension of the probability space, write
\(G=V_2^{1/2}Z_1\), where \(Z_1\sim N(0,I_d)\), and let
\(Z_2\sim N(0,I_d)\) be independent of \((\xi,Z_1)\).

We begin by applying \cref{lem:follmer}, which generalizes the corresponding result in the literature by allowing $\Sigma\ne I_d$ and $r\ne 2$:
\ben{\label{eq:masterbound}
\mcl W_{p,r}\bigl(\mcl L(W),N(0,\Sigma)\bigr)
\leq 2\int_0^{\infty} \left\|\rho_t\!\left(F_t\right)\right\|_{p,r} \, dt,
\quad p \geq 1,
}
where
\[
F_t = e^{-t} \Sigma^{-1/2}W + \sqrt{1 - e^{-2t}} Z_2,
\quad
\rho_t(F_t) = e^{-t} \mathbb{E} \left[\left. W - \frac{1}{\sqrt{e^{2t}-1}} \Sigma^{1/2} Z_2 \,\right|\, F_t\right].
\]
Here $F_t$ is the image under $\Sigma^{-1/2}$ of the conditioning vector in
\cref{lem:follmer}. This invertible deterministic transformation preserves
its sigma-field and leaves the vector inside the conditional expectation,
and hence the $\ell_r$ cost in \cref{eq:masterbound}, unchanged.
Let
\[
V_{1,\Sigma}:= \Sigma^{-1/2}V_1 \Sigma^{-1/2},
\qquad
\widetilde I_{d,t} := I_d - e^{-2t}V_{1,\Sigma},
\qquad
c_t:=1-e^{-2t}\|V_{1,\Sigma}\|_2.
\]
Since
\(0\preceq V_{1,\Sigma}\prec I_d\), for every $t>0$,
\begin{equation}\label{eq:Idbound}
\widetilde I_{d,t}\succeq c_t I_d\succeq(1-e^{-2t})I_d,
\qquad
\|\widetilde I_{d,t}^{-1/2}\|_2\le c_t^{-1/2}.
\end{equation}

We may write
\[
\begin{aligned}
F_t
&= e^{-t}\Sigma^{-1/2}\xi
   +e^{-t}\Sigma^{-1/2}V_2^{1/2}Z_1
   +\sqrt{1-e^{-2t}}\,Z_2 \\
&= e^{-t}\Sigma^{-1/2}\xi+\widetilde I_{d,t}^{1/2}Z,
\end{aligned}
\]
where
\[
Z:=\widetilde I_{d,t}^{-1/2}
\left(
e^{-t}\Sigma^{-1/2}V_2^{1/2}Z_1
+\sqrt{1-e^{-2t}}\,Z_2
\right)\sim N(0,I_d).
\]
For each fixed $t>0$, this $Z$ is standard Gaussian and independent of $\xi$;
its realization may depend on $t$. By Gaussian regression,
\[
\mathbb E\!\left[
    V_2^{1/2}Z_1-\frac{1}{\sqrt{e^{2t}-1}}\Sigma^{1/2}Z_2
    \,\middle|\,\xi,Z
\right]
=
-e^{-t}V_1\Sigma^{-1/2}\widetilde I_{d,t}^{-1/2}Z .
\]
Hence, since $\widetilde I_{d,t}^{-1/2}$ is invertible,
\[
\rho_t(F_t)
= e^{-t}\mathbb E\!\left[
    \xi-e^{-t}V_1\Sigma^{-1/2}\widetilde I_{d,t}^{-1/2}Z
    \,\middle|\,
    e^{-t}\widetilde I_{d,t}^{-1/2}\xi_\Sigma+Z
\right].
\]
We first consider $\|\xi_\Sigma\|_{3p,2}^2>1/2$.
Since $V_1\Sigma^{-1/2}=\mathbb E[\xi\xi_\Sigma^\top]$, Minkowski's
inequality, the one-dimensional Gaussian moment bound, and H\"older's
inequality give
\[
\begin{aligned}
\|V_1\Sigma^{-1/2}\widetilde I_{d,t}^{-1/2}Z\|_{p,r}
&\le C\sqrt p\,\mathbb E\!\left[
|\xi|_r|\widetilde I_{d,t}^{-1/2}\xi_\Sigma|_2\right]\\
&\le \frac{C\sqrt p}{\sqrt{1-e^{-2t}}}
\|\xi\|_{3p,r}\|\xi_\Sigma\|_{3p,2}.
\end{aligned}
\]
Here the expectation in the first line integrates a copy of $\xi$
independent of $Z$. Conditional Jensen's inequality therefore yields
\[
\|\rho_t(F_t)\|_{p,r}
\le e^{-t}\|\xi\|_{3p,r}
+\frac{C\sqrt p\,e^{-2t}}{\sqrt{1-e^{-2t}}}
\|\xi\|_{3p,r}\|\xi_\Sigma\|_{3p,2}.
\]
Using
\[
\int_0^\infty e^{-t}\,dt=1,
\qquad
\int_0^\infty\frac{e^{-2t}}{\sqrt{1-e^{-2t}}}\,dt=1,
\]
we obtain from \cref{eq:masterbound}, $p\ge1$, and
$\|\xi_\Sigma\|_{3p,2}^2>1/2$ that
\[
\begin{aligned}
\mcl W_{p,r}(\mcl L(W),N(0,\Sigma))
&\le C\|\xi\|_{3p,r}
\left(1+\sqrt p\,\|\xi_\Sigma\|_{3p,2}\right)\\
&\le Cp\|\xi\|_{3p,r}\|\xi_\Sigma\|_{3p,2}^2.
\end{aligned}
\]

For the remainder of the proof, we assume without loss of generality that  $\|\xi_\Sigma\|_{3p,2}^2\le1/2$ and $\Sigma-V\succ0$ (the general case that $\Sigma-V\succeq 0$ follows by a simple approximation argument).  
Then
\ben{\label{eq:ctlower}
\|V_{1,\Sigma}\|_2
\le \mathbb E|\xi_\Sigma|_2^2
\le \|\xi_\Sigma\|_{3p,2}^2\le\frac12,
\qquad c_t\ge\frac12.
}
To retain the third-order bound, define
\[
\widetilde\xi_{\Sigma,t}
:=\xi_\Sigma\mathbf 1_{\{|\xi_\Sigma|_2\le \eta_t(p)\}},
\qquad
\eta_t(p):=\sqrt{\frac{e^{2t}-1}{p}},
\]
and
\begin{equation}\label{eq:taus}
\tau_s
:=
\xi+
\xi\sum_{k=1}^{\infty}
\frac{e^{-ks}}{k!}
\left\langle
    \left(\widetilde I_{d,s}^{-1/2}\widetilde\xi_{\Sigma,s}\right)^{\otimes k},
    \frac{\nabla^k\phi(Z)}{\phi(Z)}
\right\rangle,
\end{equation}
where, for a vector $v\in\mathbb{R}^d$, $v^{\otimes k}$ denotes the $k$-fold tensor product $v\otimes\cdots\otimes v$, 
and $\nabla^k\phi(z)$ denotes the $k$-th order derivative tensor of $\phi$ at $z$. 
The pairing $\langle \cdot,\cdot\rangle$ represents the natural contraction between tensors of the same order. 
For the fixed \(t\), write
\(\widetilde\xi_\Sigma:=\widetilde\xi_{\Sigma,t}\) for convenience of notation.
By \cref{lem:1},
\besn{\label{eq:bonis}
&\mathbb E\!\left[
   \xi-e^{-t}V_1\Sigma^{-1/2}\widetilde I_{d,t}^{-1/2}Z
   \,\middle|\,
   e^{-t}\widetilde I_{d,t}^{-1/2}\widetilde\xi_\Sigma+Z
\right] \\
&=
\mathbb E\!\left[
   \xi-e^{-t}V_1\Sigma^{-1/2}\widetilde I_{d,t}^{-1/2}Z-\tau_t
   \,\middle|\,
   e^{-t}\widetilde I_{d,t}^{-1/2}\widetilde\xi_\Sigma+Z
\right] \\
&=
\mathbb E\!\left[
   -e^{-t}V_1\Sigma^{-1/2}\widetilde I_{d,t}^{-1/2}Z
   -\xi\sum_{k=1}^{\infty}\frac{e^{-kt}}{k!}
   \left\langle
      \left(\widetilde I_{d,t}^{-1/2}\widetilde\xi_\Sigma\right)^{\otimes k},
      \frac{\nabla^k\phi(Z)}{\phi(Z)}
   \right\rangle
   \,\middle|\,
   e^{-t}\widetilde I_{d,t}^{-1/2}\widetilde\xi_\Sigma+Z
\right] \\
&=
\mathbb E\!\left[
   e^{-t}
   \left(
      \xi\widetilde\xi_\Sigma^\top
      -\mathbb E[\xi\widetilde\xi_\Sigma^\top]
   \right)
   \widetilde I_{d,t}^{-1/2}Z
   -e^{-t}
   \mathbb E\!\left[
      \xi\xi_\Sigma^\top\mathbf 1_{\{|\xi_\Sigma|_2>\eta_t(p)\}}
   \right]
   \widetilde I_{d,t}^{-1/2}Z
\right. \\
&\hspace{4.5cm}\left.
   -\xi
   \sum_{k=2}^{\infty}\frac{e^{-kt}}{k!}
   \left\langle
      \left(\widetilde I_{d,t}^{-1/2}\widetilde\xi_\Sigma\right)^{\otimes k},
      \frac{\nabla^k\phi(Z)}{\phi(Z)}
   \right\rangle
   \,\middle|\,
   e^{-t}\widetilde I_{d,t}^{-1/2}\widetilde\xi_\Sigma+Z
\right].
}
The regression identity and \cref{eq:bonis} give
\[
\|\rho_t(F_t)\|_{p,r}
\le e^{-t}(\mathcal R+\mathcal R_1+\mathcal R_2+\mathcal R_3),
\]
where
\[
\begin{aligned}
\mathcal R
&:=
\Bigg\|
\mathbb E\!\left[
   \xi-e^{-t}V_1\Sigma^{-1/2}\widetilde I_{d,t}^{-1/2}Z
   \,\middle|\,
   Z+e^{-t}\widetilde I_{d,t}^{-1/2}\xi_\Sigma
\right] \\
&\hspace{2.6cm}
-
\mathbb E\!\left[
   \xi-e^{-t}V_1\Sigma^{-1/2}\widetilde I_{d,t}^{-1/2}Z
   \,\middle|\,
   Z+e^{-t}\widetilde I_{d,t}^{-1/2}\widetilde\xi_\Sigma
\right]
\Bigg\|_{p,r}, \\
\mathcal R_1
&:= e^{-t}
\left\|
\mathbb E\!\left[
   \left(
      \xi\widetilde\xi_\Sigma^\top
      -\mathbb E[\xi\widetilde\xi_\Sigma^\top]
   \right)
   \widetilde I_{d,t}^{-1/2}Z
   \,\middle|\,
   e^{-t}\widetilde I_{d,t}^{-1/2}\widetilde\xi_\Sigma+Z
\right]
\right\|_{p,r}, \\
\mathcal R_2
&:= e^{-t}
\left\|
\mathbb E\!\left[
   \mathbb E\!\left[
      \xi\xi_\Sigma^\top\mathbf 1_{\{|\xi_\Sigma|_2>\eta_t(p)\}}
   \right]
   \widetilde I_{d,t}^{-1/2}Z
   \,\middle|\,
   e^{-t}\widetilde I_{d,t}^{-1/2}\widetilde\xi_\Sigma+Z
\right]
\right\|_{p,r}, \\
\mathcal R_3
&:=
\left\|
   \xi
   \sum_{k=2}^{\infty}
   \frac{e^{-kt}}{k!}
   \left\langle
      \left(\widetilde I_{d,t}^{-1/2}\widetilde\xi_\Sigma\right)^{\otimes k},
      \frac{\nabla^k\phi(Z)}{\phi(Z)}
   \right\rangle
\right\|_{p,r}.
\end{aligned}
\]

\noindent\textbf{Step 1: Control of \(\mathcal R\).}

By the triangle inequality,
\[
\mathcal R\le \mathcal R_\xi+\mathcal R_Z,
\]
where
\[
\begin{aligned}
\mathcal R_\xi
&:= \Bigg\|
\mathbb E\!\left[
    \xi
    \,\middle|\,
    Z+e^{-t}\widetilde I_{d,t}^{-1/2}\xi_\Sigma
\right]
-
\mathbb E\!\left[
    \xi
    \,\middle|\,
    Z+e^{-t}\widetilde I_{d,t}^{-1/2}\widetilde\xi_\Sigma
\right]
\Bigg\|_{p,r}, \\
\mathcal R_Z
&:= e^{-t}\Bigg\|
V_1\Sigma^{-1/2}\widetilde I_{d,t}^{-1/2}
\left\{
\mathbb E\!\left[
    Z
    \,\middle|\,
    Z+e^{-t}\widetilde I_{d,t}^{-1/2}\xi_\Sigma
\right]
-
\mathbb E\!\left[
    Z
    \,\middle|\,
    Z+e^{-t}\widetilde I_{d,t}^{-1/2}\widetilde\xi_\Sigma
\right]
\right\}
\Bigg\|_{p,r}.
\end{aligned}
\]

We first bound \(\mathcal R_\xi\). We will apply \cref{lem:2} with
\[
\begin{gathered}
X=e^{-t}\widetilde I_{d,t}^{-1/2}\widetilde\xi_\Sigma,
\qquad
Y=e^{-t}\widetilde I_{d,t}^{-1/2}(\xi_\Sigma-\widetilde\xi_\Sigma),\\[-1pt]
H(x,y,z)=e^t\Sigma^{1/2}\widetilde I_{d,t}^{1/2}(x+y),\\
\alpha_0=3p,
\quad \beta_0=\frac{3p}{2},
\quad \alpha_i=\beta_i=\gamma_i=3p\ (i=1,2).
\end{gathered}
\]
Then
\[
\begin{gathered}
Z+X+Y=Z+e^{-t}\widetilde I_{d,t}^{-1/2}\xi_\Sigma,
\qquad
Z+X=Z+e^{-t}\widetilde I_{d,t}^{-1/2}\widetilde\xi_\Sigma,\\[-1pt]
H(X,Y,Z)=\Sigma^{1/2}\xi_\Sigma=\xi,
\quad \partial_zH=0.
\end{gathered}
\]
Lemma~\ref{lem:2}, with $g\sim N(0,1)$ and $\|g\|_{3p/2}\le C\sqrt p$, gives
\besn{
\mathcal R_\xi
&\le C\|\xi\|_{3p,r}\Bigg[
\sqrt p\,e^{-t}
\left\|\widetilde I_{d,t}^{-1/2}
(\xi_\Sigma-\widetilde\xi_\Sigma)\right\|_{\frac{3p}{2},2} \\
&\qquad+e^{-2t}
\left\|\widetilde I_{d,t}^{-1/2}\widetilde\xi_\Sigma\right\|_{3p,2}
\left\|\widetilde I_{d,t}^{-1/2}
(\xi_\Sigma-\widetilde\xi_\Sigma)\right\|_{3p,2} \\
&\qquad+e^{-2t}
\left\|\widetilde I_{d,t}^{-1/2}
(\xi_\Sigma-\widetilde\xi_\Sigma)\right\|_{3p,2}^{2}\Bigg].
}
Since $\xi_\Sigma-\widetilde\xi_\Sigma
=\xi_\Sigma\mathbf 1_{\{|\xi_\Sigma|_2>\eta_t(p)\}}$, \cref{eq:Idbound} yields
\besn{
\left\|\widetilde I_{d,t}^{-1/2}
(\xi_\Sigma-\widetilde\xi_\Sigma)\right\|_{\frac{3p}{2},2}
&\le c_t^{-1/2}\eta_t(p)^{-1}
\left\||\xi_\Sigma|_2^2\right\|_{\frac{3p}{2}}
=\sqrt{\frac{p}{c_t(e^{2t}-1)}}\,\|\xi_\Sigma\|_{3p,2}^{2},\\
\left\|\widetilde I_{d,t}^{-1/2}\widetilde\xi_\Sigma\right\|_{3p,2}
&\le c_t^{-1/2}\|\xi_\Sigma\|_{3p,2},\\
\left\|\widetilde I_{d,t}^{-1/2}
(\xi_\Sigma-\widetilde\xi_\Sigma)\right\|_{3p,2}
&\le c_t^{-1/2}\|\xi_\Sigma\|_{3p,2}.
}
Thus
\[
\mathcal R_\xi
\le Cp\left\{\frac{e^{-t}}{\sqrt{c_t(e^{2t}-1)}}
+\frac{e^{-2t}}{c_t}\right\}
\|\xi\|_{3p,r}\|\xi_\Sigma\|_{3p,2}^{2}.
\]

It remains to bound \(\mathcal R_Z\). 
Applying the identity \(\mathbb E[Z\mid Z+U]=Z+U-\mathbb E[U\mid Z+U]\) with
\(U=e^{-t}\widetilde I_{d,t}^{-1/2}\xi_\Sigma\) and
\(U=e^{-t}\widetilde I_{d,t}^{-1/2}\widetilde\xi_\Sigma\) gives
\[
\begin{aligned}
&\mathbb E\!\left[
Z
\,\middle|\,
Z+e^{-t}\widetilde I_{d,t}^{-1/2}\xi_\Sigma
\right]
-
\mathbb E\!\left[
Z
\,\middle|\,
Z+e^{-t}\widetilde I_{d,t}^{-1/2}\widetilde\xi_\Sigma
\right] \\
&=
e^{-t}\widetilde I_{d,t}^{-1/2}(\xi_\Sigma-\widetilde\xi_\Sigma) \\
&\quad
-e^{-t}\widetilde I_{d,t}^{-1/2}
\left\{
\mathbb E\!\left[
\xi_\Sigma
\,\middle|\,
Z+e^{-t}\widetilde I_{d,t}^{-1/2}\xi_\Sigma
\right]
-
\mathbb E\!\left[
\widetilde\xi_\Sigma
\,\middle|\,
Z+e^{-t}\widetilde I_{d,t}^{-1/2}\widetilde\xi_\Sigma
\right]
\right\}.
\end{aligned}
\]
For every deterministic $v$, the rank-one identity gives
\[
\begin{aligned}
V_1\Sigma^{-1/2}\widetilde I_{d,t}^{-1}v
&=\mathbb E\!\left[\xi
\left\langle\widetilde I_{d,t}^{-1}\xi_\Sigma,v\right\rangle\right],\\
\left|V_1\Sigma^{-1/2}\widetilde I_{d,t}^{-1}v\right|_r
&\le \mathbb E\!\left[|\xi|_r
|\widetilde I_{d,t}^{-1}\xi_\Sigma|_2\right]|v|_2.
\end{aligned}
\]
Conditional Jensen's and H\"older's inequalities therefore imply
\[
\begin{aligned}
\mathcal R_Z
&\le e^{-2t}\mathbb E\!\left[|\xi|_r
|\widetilde I_{d,t}^{-1}\xi_\Sigma|_2\right]
\left(\|\xi_\Sigma-\widetilde\xi_\Sigma\|_{p,2}
+\|\xi_\Sigma\|_{p,2}+\|\widetilde\xi_\Sigma\|_{p,2}\right)\\
&\le\frac{3e^{-2t}}{c_t}
\|\xi\|_{3p,r}\|\xi_\Sigma\|_{3p,2}^2.
\end{aligned}
\]
Combining the bounds for $\mathcal R_\xi$ and $\mathcal R_Z$, using $c_t\le1$ and $p\ge1$, gives
\[
\mathcal R
\le Cp\left\{\frac{e^{-t}}{\sqrt{e^{2t}-1}\,c_t}
+\frac{e^{-2t}}{c_t}\right\}
\|\xi\|_{3p,r}\|\xi_\Sigma\|_{3p,2}^2.
\]

\noindent\textbf{Step 2: Control of \(\mathcal R_1\).}\par\nopagebreak

Let \(\tilde V_1:=\mathbb E(\xi\tilde\xi_\Sigma^\top)\). Then
\[
\mathbb E\!\left[\left(\xi\tilde\xi_\Sigma^\top-\tilde V_1\right)
\tilde I_{d,t}^{-1/2}Z\mid Z\right]=0.
\]
We will apply \cref{lem:2}, using \(\xi\) as the auxiliary random element, with
\[
\begin{gathered}
X=0,
\qquad Y=e^{-t}\tilde I_{d,t}^{-1/2}\tilde\xi_\Sigma,
\qquad
H(x,y,z)=\left(\xi\tilde\xi_\Sigma^\top-\tilde V_1\right)\tilde I_{d,t}^{-1/2}z,\\[-1pt]
Z+X+Y=Z+e^{-t}\tilde I_{d,t}^{-1/2}\tilde\xi_\Sigma,
\quad Z+X=Z,
\quad
\partial_zH=\left(\xi\tilde\xi_\Sigma^\top-\tilde V_1\right)\tilde I_{d,t}^{-1/2},\\[-1pt]
\rho=2,
\quad a=\frac{3p}{2},
\quad c=3p,
\quad \alpha_0=\frac{3p}{2},
\quad \beta_0=3p,
\quad \alpha_2=\beta_2=\gamma_2=3p.
\end{gathered}
\]
Lemma~\ref{lem:2} gives
\bes{
\frac{\mathcal R_1}{e^{-t}}
&\le
2e^{-t}
\left\|
\left(\xi\tilde\xi_\Sigma^\top-\tilde V_1\right)
\tilde I_{d,t}^{-1/2}
\right\|_{\frac{3p}{2},\,2\to r}
\left\|
\tilde I_{d,t}^{-1/2}\tilde\xi_\Sigma
\right\|_{3p,2} \\
&\quad+
4e^{-t}
\left\|
\left(\xi\tilde\xi_\Sigma^\top-\tilde V_1\right)
\tilde I_{d,t}^{-1/2}Z
\right\|_{\frac{3p}{2},r}
\left\|
\tilde I_{d,t}^{-1/2}\tilde\xi_\Sigma
\right\|_{3p,2}
\|g\|_{3p} \\
&\quad+
2e^{-2t}
\left\|
\left(\xi\tilde\xi_\Sigma^\top-\tilde V_1\right)
\tilde I_{d,t}^{-1/2}Z
\right\|_{3p,r}
\left\|
\tilde I_{d,t}^{-1/2}\tilde\xi_\Sigma
\right\|_{3p,2}^{2}.
}

We estimate the three factors on the right-hand side. First, by
\(\|uv^\top\|_{2\to r}\le |u|_r|v|_2\) and Jensen's inequality,
\[
\begin{aligned}
&\left\|
\left(\xi\tilde\xi_\Sigma^\top-\tilde V_1\right)
\tilde I_{d,t}^{-1/2}
\right\|_{\frac{3p}{2},\,2\to r} \\
&\quad\le
\left\|
\xi\tilde\xi_\Sigma^\top\tilde I_{d,t}^{-1/2}
\right\|_{\frac{3p}{2},\,2\to r}
+
\left\|
\tilde V_1\tilde I_{d,t}^{-1/2}
\right\|_{2\to r} \\
&\quad\le
\|\xi\|_{3p,r}
\left\|
\tilde I_{d,t}^{-1/2}\tilde\xi_\Sigma
\right\|_{3p,2}
+
\mathbb{E}\!\left[
|\xi|_r
\left|
\tilde I_{d,t}^{-1/2}\tilde\xi_\Sigma
\right|_2
\right] \\
&\quad\le
2\|\xi\|_{3p,r}
\left\|
\tilde I_{d,t}^{-1/2}\tilde\xi_\Sigma
\right\|_{3p,2}.
\end{aligned}
\]
Second, since \(Z\) is independent of \((\xi,\tilde\xi_\Sigma)\),
\[
\begin{aligned}
&\left\|
\left(\xi\tilde\xi_\Sigma^\top-\tilde V_1\right)
\tilde I_{d,t}^{-1/2}Z
\right\|_{\frac{3p}{2},r} \\
&\quad\le
C
\left\|
|\xi|_r
\left|
\left\langle
\tilde I_{d,t}^{-1/2}\tilde\xi_\Sigma,Z
\right\rangle
\right|
\right\|_{\frac{3p}{2}} \\
&\quad\le
C\|g\|_{3p}
\|\xi\|_{3p,r}
\left\|
\tilde I_{d,t}^{-1/2}\tilde\xi_\Sigma
\right\|_{3p,2} \\
&\quad\le
C\sqrt p\,
\|\xi\|_{3p,r}
\left\|
\tilde I_{d,t}^{-1/2}\tilde\xi_\Sigma
\right\|_{3p,2}.
\end{aligned}
\]
Finally, using \(|\tilde\xi_\Sigma|_2\le \eta_t(p)\), we have
\besn{
&\left\|
\left(\xi\tilde\xi_\Sigma^\top-\tilde V_1\right)
\tilde I_{d,t}^{-1/2}Z
\right\|_{3p,r} \\
&\quad\le
C
\left\|
|\xi|_r
\left|
\left\langle
\tilde I_{d,t}^{-1/2}\tilde\xi_\Sigma,Z
\right\rangle
\right|
\right\|_{3p} \\
&\quad\le
C\|g\|_{3p}
\left\|
\left|
\tilde I_{d,t}^{-1/2}\tilde\xi_\Sigma
\right|_2
|\xi|_r
\right\|_{3p} \\
&\quad\le
C\sqrt p\,
\|\tilde I_{d,t}^{-1/2}\|_2
\eta_t(p)
\|\xi\|_{3p,r}.
}
Combining the three estimates and using \cref{eq:Idbound}, we obtain
\[
\begin{aligned}
\mathcal R_1
&\le Cp e^{-2t}\left(1+
e^{-t}\|\tilde I_{d,t}^{-1/2}\|_2\eta_t(p)\right)
\|\xi\|_{3p,r}
\left\|\tilde I_{d,t}^{-1/2}\tilde\xi_\Sigma\right\|_{3p,2}^{2}\\
&\le Cp\left\{
\frac{e^{-2t}}{c_t}
+\frac{e^{-3t}\eta_t(p)}{c_t^{3/2}}\right\}
\|\xi\|_{3p,r}\|\xi_\Sigma\|_{3p,2}^{2}.
\end{aligned}
\]

\noindent\textbf{Step 3: Control of \(\mathcal R_2\).}

Conditional Jensen's, Markov's and H\"older's inequalities, together with
$\|g\|_p\le\sqrt p$ for $g\sim N(0,1)$, give
\[
\begin{aligned}
\mathcal R_2
&\le e^{-t}\left\|
\xi\xi_\Sigma^\top\mathbf 1_{\{|\xi_\Sigma|_2>\eta_t(p)\}}
\tilde I_{d,t}^{-1/2}Z\right\|_{p,r}\\
&\le \frac{e^{-t}\sqrt p}{\sqrt{e^{2t}-1}}
\left\|\tilde I_{d,t}^{-1/2}\right\|_2
\left\||\xi|_r|\xi_\Sigma|_2^2\right\|_p\|g\|_p\\
&\le\frac{Cp e^{-t}}{\sqrt{c_t(e^{2t}-1)}}
\|\xi\|_{3p,r}\|\xi_\Sigma\|_{3p,2}^{2}.
\end{aligned}
\]


\noindent\textbf{Step 4: Control of \(\mathcal R_3\).}  

Here we use the following additional conventions. 
For a multi-index $\alpha=(\alpha_1,\dots,\alpha_d)\in\mathbb{N}^d$ of nonnegative integers, we write $|\alpha|=\sum_{j=1}^d \alpha_j$, $\alpha!=\alpha_1!\cdots\alpha_d!$ and $x^\alpha = x_1^{\alpha_1}\cdots x_d^{\alpha_d}$; 
$H_\alpha(Z)$ denotes the Hermite polynomial associated with $\alpha$. We have
\[
\begin{aligned}
\mathcal R_3
&= \left\lVert \xi\,
   \sum_{k=2}^{\infty} \frac{e^{-k t}}{k!}
   \left\langle\bigl(\tilde I_{d,t}^{-1/2}\tilde{\xi}_\Sigma\bigr)^{\otimes k}, \frac{\nabla^k \phi(Z)}{\phi(Z)}\right\rangle
   \right\rVert_{p,r} \\[6pt]
&\leqslant \sum_{k=2}^{\infty} \frac{e^{-k t}}{k!}
   \left\lVert \xi\,
   \left\langle\bigl(\tilde{I}_{d,t}^{-1/2}\tilde{\xi}_\Sigma\bigr)^{\otimes k}, \frac{\nabla^k \phi(Z)}{\phi(Z)}\right\rangle
   \right\rVert_{p,r} \\[6pt]
&=\sum_{k=2}^{\infty} \frac{e^{-k t}}{k!}
   \left\lVert \xi\sum_{|\alpha|=k}
   \frac{k!}{\alpha!}\,\bigl(\tilde{I}_{d,t}^{-1/2}\tilde{\xi}_\Sigma\bigr)^{\alpha}\,H_\alpha(Z)
   \right\rVert_{p,r} \\[6pt]
&=\sum_{k=2}^{\infty} \frac{e^{-k t}}{k!}
   \left\lVert \Biggl(\sum_{|\alpha|=k}
   \frac{k!}{\alpha!}\,\bigl(\tilde I_{d,t}^{-1/2}\tilde{\xi}_\Sigma\bigr)^{\alpha}\,H_\alpha(Z)\Biggr)
   \,|\xi|_r \right\rVert_p \\[6pt]
&=\sum_{k=2}^{\infty} \frac{e^{-k t}}{k!}
   \Biggl(\mathbb{E}\Biggl[
   \mathbb{E}\Biggl(\Bigl\lvert\sum_{|\alpha|=k} \frac{k!}{\alpha!}
   \bigl(\tilde{I}_{d,t}^{-1/2}\tilde{\xi}_\Sigma\bigr)^\alpha H_\alpha(Z)\Bigr\rvert^p
   \,|\xi|_r^p \,\Big|\, \xi\Biggr)\Biggr]\Biggr)^{1/p}.
\end{aligned}
\]
From \cite[Lemma~3]{bonis2020stein}, we have, for any \( p \geq 1 \),
\ben{\label{lem:3}
\Bigl( \mathbb{E} \bigl| \sum_{\alpha} M_\alpha H_\alpha(Z) \bigr|^p  \Bigr)^{2/p} 
\leq \sum_{\alpha} \max\{1,\,p-1\}^{|\alpha|} \, \alpha! \, |M_\alpha|^2 ,
}
where 
\(M_\alpha\in\mathbb R\), \(\alpha\in\mathbb N^d\), have only finitely many
nonzero coefficients. 

Conditioning on \(\xi\), using the independence of \(Z\), and applying
\cref{lem:3},
\[
\begin{aligned}
\mathcal R_3
& \leqslant 
\sum_{k=2}^{\infty} \frac{e^{-k t}}{k!}
\Biggl(\mathbb{E}\Biggl[
\sum_{|\alpha|=k} \max(1,p-1)^k \alpha!\,
|\xi|_r^2\,
\Bigl\lvert \frac{k!}{\alpha!}\,(\tilde I_{d,t}^{-1/2}\tilde{\xi}_\Sigma)^\alpha \Bigr\rvert^2
\Biggr]^{p/2}\Biggr)^{1/p} \\
&=\sum_{k=2}^{\infty} \frac{e^{-k t}}{\sqrt{k!}}\,
   \max(1,p-1)^{k/2}
   \Biggl(\mathbb{E}\Biggl[\sum_{|\alpha|=k} \frac{k!}{\alpha!}\,
   | \xi|_r^2 \,
   \bigl\lvert (\tilde{I}_{d,t}^{-1/2}\tilde{\xi}_\Sigma)^{\alpha} \bigr\rvert^2
   \Biggr]^{p/2}\Biggr)^{1/p} \\[6pt]
&=\sum_{k=2}^{\infty} \frac{e^{-k t}}{\sqrt{k!}}\,
   \max(1,p-1)^{k/2}\,
   \bigg\lVert \,| \xi|_r\,| \tilde{I}_{d,t}^{-1/2}\tilde{\xi}_\Sigma|_2^k \,\bigg\rVert_p\\[6pt]
&\leqslant \sum_{k=2}^{\infty} \frac{e^{-k t}}{\sqrt{k!}}\,
   \max(1,p-1)^{k/2}\,
   \eta_t(p)^{\,k-2}\,
   \lVert \tilde{I}_{d,t}^{-1/2}\rVert_2^k\,
   \bigg\lVert \,| \xi|_r\,| \xi_\Sigma|_2^2 \,\bigg\rVert_p.
\end{aligned}
\]

Since $\max(1,p-1)\le p$, the preceding bound implies
\[
\begin{aligned}
\mathcal R_3
&\le \frac{pe^{-2t}}{c_t}
\sum_{k=2}^{\infty}\frac1{\sqrt{k!}}
\left(\frac{1-e^{-2t}}{c_t}\right)^{(k-2)/2}
\left\||\xi|_r|\xi_\Sigma|_2^2\right\|_p\\
&\le \frac{Cpe^{-2t}}{c_t}
\|\xi\|_{3p,r}\|\xi_\Sigma\|_{3p,2}^2,
\end{aligned}
\]
because $(1-e^{-2t})/c_t\le1$ and
$\sum_{k=2}^{\infty}1/\sqrt{k!}<\infty$.

Recall the decomposition
\[
\|\rho_t(F_t)\|_{p,r}
\le
e^{-t}(\mathcal R+\mathcal R_1+\mathcal R_2+\mathcal R_3).
\]
The elementary estimates
\[
\int_0^\infty
\left\{
\frac{e^{-2t}}{\sqrt{e^{2t}-1}\,c_t}
+
\frac{e^{-3t}}{c_t}
+
\frac{e^{-4t}\eta_t(p)}{c_t^{3/2}}
\right\}\,dt
\le
C
\]
(where we used \(c_t\geq 1/2\) from \cref{eq:ctlower}) and the preceding bounds yield
\[
\int_0^\infty \|\rho_t(F_t)\|_{p,r}\,dt
\le
Cp
\|\xi\|_{3p,r}\|\xi_\Sigma\|_{3p,2}^2.
\]
The claim follows from \cref{eq:masterbound}.
\end{proof}

\begin{proof}[Proof of \cref{thm:3}]
Let \(Z_0,Z_1,\ldots,Z_n\) be i.i.d. standard normal random vectors,
independent of \(\mathcal F_n\), and put \(Y=M^{1/2}Z_0\).  Define
\[
        G:=\sum_{i=1}^n V_i^{1/2}Z_i,
        \qquad
        T_k:=\sum_{i=k}^n V_i^{1/2}Z_i,
        \qquad 1\le k\le n+1,
\]
where \(T_{n+1}=0\). Since \(\sum_{i=1}^nV_i=\Sigma=\bar\Sigma\) a.s.,
conditionally on \(\mathcal F_n\) the random vector \(G\) is Gaussian with
covariance \(\Sigma\). Hence \(G\sim N(0,\Sigma)\).

The identity
\[
        \Pi_k=\Sigma-\sum_{i=1}^{k-1}V_i
\]
shows that \(\Pi_k\) and \(\Pi_{k+1}\) are \(\mathcal F_{k-1}\)-measurable.
Moreover, conditionally on \(\mathcal F_{k-1}\),
\[
        T_{k+1}+Y\sim N(0,\Pi_{k+1}+M),
        \qquad
        V_k^{1/2}Z_k+T_{k+1}+Y\sim N(0,\Pi_k+M).
\]
Also \(T_{k+1}+Y\) is conditionally independent of \(\xi_k\) given
\(\mathcal F_{k-1}\). 

By the triangle inequality, we have
\besn{\label{eq:triangle}
&\mcl W_{p,r}\bigl(\mathcal L(S_n),N(0,\Sigma)\bigr)\\
&\le W_{p,r}\bigl(\mathcal L(S_n+Y),\mathcal L(G+Y))+W_{p,r}\bigl(\mathcal L(S_n+Y),\mathcal L(S_n))+W_{p,r}\bigl(\mathcal L(G),\mathcal L(G+Y))\\
&\le
\mcl W_{p,r}\bigl(\mathcal L(S_n+Y),\mathcal L(G+Y)\bigr)
     +2\|Y\|_{p,r}  \\
&=
\mcl W_{p,r}\bigl(\mathcal L(S_n+Y),\mathcal L(G+Y)\bigr)
     +2\|M^{1/2}Z\|_{p,r},
}
where \(Z\sim N(0,I_d)\).

Let
\[
        U_k:=S_k+T_{k+1}+Y,
        \qquad 0\le k\le n .
\]
Then \(U_n=S_n+Y\) and \(U_0=G+Y\). Hence Lindeberg's telescoping argument
and the triangle inequality give
\[
\mcl W_{p,r}\bigl(\mathcal L(S_n+Y),\mathcal L(G+Y)\bigr)
\le
\sum_{k=1}^n R_k,
\]
where
\[
        R_k:=\mcl W_{p,r}\bigl(\mathcal L(U_k),\mathcal L(U_{k-1})\bigr).
\]
Integrating conditional optimal couplings given \(\mathcal F_{k-1}\) yields
\[
\begin{aligned}
R_k
&\le
\left\{
\mathbb E\left[
\mcl W_{p,r}^p\!\left(
\mathcal L(U_k\mid\mathcal F_{k-1}),
\mathcal L(U_{k-1}\mid\mathcal F_{k-1})
\right)
\right]
\right\}^{1/p}   \\
&=
\left\{
\mathbb E\left[
\mcl W_{p,r}^p\!\left(
\mathcal L(\xi_k+T_{k+1}+Y\mid\mathcal F_{k-1}),
\mathcal L(V_k^{1/2}Z_k+T_{k+1}+Y\mid\mathcal F_{k-1})
\right)
\right]
\right\}^{1/p}.
\end{aligned}
\]

We now apply \cref{prop:1} conditionally on \(\mathcal F_{k-1}\), with
\[
        \xi=\xi_k,\qquad V=V_k,\qquad \Sigma=\Pi_k+M.
\]
Here the total covariance is \(\Pi_k+M\succ0\), while the independent
Gaussian covariance is \(\Pi_{k+1}+M\succeq0\), including zero when
\(k=n\) and \(M=0\). The universal constant in \cref{prop:1} is uniform
over the conditional distributions.
Therefore, almost surely,
\[
\begin{aligned}
&\mcl W_{p,r}\!\left(
\mathcal L(\xi_k+T_{k+1}+Y\mid\mathcal F_{k-1}),
\mathcal L(V_k^{1/2}Z_k+T_{k+1}+Y\mid\mathcal F_{k-1})
\right)  \\
&\qquad\le
Cp
\|\xi_k\mid\mathcal F_{k-1}\|_{3p,r}
\|(\Pi_k+M)^{-1/2}\xi_k\mid\mathcal F_{k-1}\|_{3p,2}^{2}.
\end{aligned}
\]
Substituting this estimate into the preceding bound for \(R_k\), summing over
\(k\), and using the smoothing inequality \cref{eq:triangle} proves \eqref{eq:thm3}.
\end{proof}

\begin{proof}[Proof of \cref{cor:2}]
Since $\Pi_k=\bar\Sigma-\sum_{i<k}V_i$ is
$\mathcal F_{k-1}$-measurable, the covariance lower bounds give
\[
\begin{gathered}
\Pi_k\succeq\frac{\alpha(n-k+1)}n\bar\Sigma\succ0,
\qquad\Pi_k^{-1}\preceq\frac{n}{\alpha(n-k+1)}\bar\Sigma^{-1},\\
\|\Pi_k^{-1/2}\xi_k\mid\mathcal F_{k-1}\|_{3p,2}^2
\le\frac{n}{\alpha(n-k+1)}
\|\bar\Sigma^{-1/2}\xi_k\mid\mathcal F_{k-1}\|_{3p,2}^2.
\end{gathered}
\]
Applying \cref{thm:3} with $M=0$ proves the bound in \cref{eq:cor2.1-1}.
For \cref{eq:cor2.1-2}, H\"older's inequality in the outer expectation and the condition \cref{eq:cor2.1cond} give
\[
\begin{aligned}
&\left\|
   \|\xi_k\mid\mathcal F_{k-1}\|_{3p,r}
   \|\bar\Sigma^{-1/2}\xi_k\mid\mathcal F_{k-1}\|_{3p,2}^2
\right\|_p\\
&\quad\le
\left\|\|\xi_k\mid\mathcal F_{k-1}\|_{3p,r}\right\|_{3p}
\left\|\|\bar\Sigma^{-1/2}\xi_k\mid\mathcal F_{k-1}\|_{3p,2}\right\|_{3p}^2\\
&\quad=
\|\xi_k\|_{3p,r}\|\bar\Sigma^{-1/2}\xi_k\|_{3p,2}^2
\le\frac{K}{n^{3/2}}.
\end{aligned}
\]
Substitution into \cref{eq:cor2.1-1} yields
\[
\mcl W_{p,r}\bigl(\mathcal L(S_n),N(0,\bar\Sigma)\bigr)
\le\frac{CpK}{\alpha\sqrt n}\sum_{k=1}^n\frac1{n-k+1}
\le\frac{CpK}{\alpha}\frac{\log(en)}{\sqrt n}.
\]
\end{proof}

\begin{proof}[Proof of \cref{cor:1}]
By \cref{thm:3} with $M=\sigma^2I_d$, for every $\sigma>0$,
\begin{equation}\label{eq:cor22-sigma-explicit}
\begin{gathered}
\Pi_k+\sigma^2I_d\succeq\sigma^2I_d\succ0
\quad(1\le k\le n),\\
\mcl W_{p,r}\!\left(\mathcal L(S_n),N(0,\bar\Sigma)\right)
\le\frac{CpA_{p,r}}{\sigma^2}+2\sigma\|Z\|_{p,r}.
\end{gathered}
\end{equation}
If $A_{p,r}>0$, choosing
\[
\sigma=\left(\frac{pA_{p,r}}{\|Z\|_{p,r}}\right)^{1/3}
\]
gives \eqref{eq:cor1}. If $A_{p,r}=0$, let $\sigma\downarrow0$ in
\eqref{eq:cor22-sigma-explicit}.
\end{proof}


\begin{proof}[Proof of \cref{thm:2}]
Let $Z_0\sim N(0,I_d)$ be independent of $\mathcal F_n$ and append
$H^{1/2}Z_0$ as the $(n+1)$-st increment, enlarging the filtration by
$Z_0$ only at time $n+1$. Then
\[
\begin{gathered}
\widetilde S:=S_n+H^{1/2}Z_0,
\qquad\Sigma+H=\bar\Sigma+\Delta_0,\\
\Pi_k+H=\bar\Sigma+\Delta_0-\sum_{i<k}V_i,
\qquad1\le k\le n.
\end{gathered}
\]
Thus $\Pi_k+H$ is $\mathcal F_{k-1}$-measurable. At step $n+1$, both smoothed conditional laws are $N(0,H+M)$, this step introduces zero error in the Lindeberg's swapping argument.
For $k\le n$, inverse order and quadratic forms give
\[
\begin{aligned}
\Pi_k+H+M&\succeq\Pi_k+M\succ0,\\
(\Pi_k+H+M)^{-1}&\preceq(\Pi_k+M)^{-1},\\
\xi_k^\top(\Pi_k+H+M)^{-1}\xi_k
&\le\xi_k^\top(\Pi_k+M)^{-1}\xi_k,\\
\|(\Pi_k+H+M)^{-1/2}\xi_k\mid\mathcal F_{k-1}\|_{3p,2}^2
&\le\|(\Pi_k+M)^{-1/2}\xi_k\mid\mathcal F_{k-1}\|_{3p,2}^2.
\end{aligned}
\]
Thus, from \cref{thm:3},
\[
\begin{aligned}
\mcl W_{p,r}\bigl(\mathcal L(\widetilde S),N(0,\bar\Sigma+\Delta_0)\bigr)
\le\;&
Cp\sum_{k=1}^n
\Bigl\|
    \|\xi_k\mid\mathcal F_{k-1}\|_{3p,r}
    \|(\Pi_k+M)^{-1/2}\xi_k
        \mid\mathcal F_{k-1}\|_{3p,2}^{2}
\Bigr\|_p
\\
&\quad
+2\|M^{1/2}Z\|_{p,r}.
\end{aligned}
\]

It remains to remove the Gaussian augmentation and to return to the target
\(N(0,\bar\Sigma)\). By the triangle inequality,
\[
\begin{aligned}
\mcl W_{p,r}\bigl(\mathcal L(S_n),N(0,\bar\Sigma)\bigr)
\le\;&
\mcl W_{p,r}\bigl(\mathcal L(S_n),\mathcal L(\widetilde S)\bigr)
\\
&\quad
+
\mcl W_{p,r}\bigl(\mathcal L(\widetilde S),N(0,\bar\Sigma+\Delta_0)\bigr)
\\
&\quad
+
\mcl W_{p,r}\bigl(N(0,\bar\Sigma+\Delta_0),N(0,\bar\Sigma)\bigr).
\end{aligned}
\]
For the first term, using the coupling
\(\widetilde S=S_n+H^{1/2}Z_0\), we have
\[
    \mcl W_{p,r}\bigl(\mathcal L(S_n),\mathcal L(\widetilde S)\bigr)
    \le
    \|H^{1/2}Z_0\|_{p,r}.
\]
The last Gaussian comparison term is bounded by $\|\Delta_0^{1/2} Z\|_{p,r}$.
Combining the preceding estimates proves the theorem.
\end{proof}

\begin{proof}[Proof of \cref{thm:4}]
Following \cite[Proof of Lemma~B.8]{cattaneo2025yurinskii}, fix \(\gamma>0\)
and \(0\le a\le1\) with \(V_k\succeq a\bar\Sigma/n\).
Take \(a=\alpha\) for the elliptic bound and \(a=0\) for the smoothing bound.
For \(Z_0\sim N(0,I_d)\) independent of \(\mathcal F_n\), put
\[
\begin{aligned}
 H_k&:=\bar\Sigma+\gamma^2I_d-\sum_{i=1}^kV_i,\qquad 0\le k\le n,\\
 \tau_a&:=\max\left\{0\le k\le n:
 H_k\succeq\frac{a(n-k)}n\bar\Sigma\right\},\\
 \widetilde S^{(a)}&:=S_{\tau_a}+H_{\tau_a}^{1/2}Z_0.
\end{aligned}
\]
The stopping set is nonempty, since \(H_0-a\bar\Sigma\succeq0\), and
\[
\begin{aligned}
 &\left(H_{k+1}-\frac{a(n-k-1)}n\bar\Sigma\right)
  -\left(H_k-\frac{a(n-k)}n\bar\Sigma\right)
 =-V_{k+1}+\frac an\bar\Sigma\preceq0,\\
 &\{k\le\tau_a\}
 =\left\{H_k\succeq\frac{a(n-k)}n\bar\Sigma\right\}\in\mathcal F_{k-1},\\
 &H_{\tau_a}\succeq0,\qquad
 \sum_{k=1}^nV_k\mathbf1_{\{k\le\tau_a\}}+H_{\tau_a}
 =\bar\Sigma+\gamma^2I_d.
\end{aligned}
\]
Thus \(\tau_a\) is a bounded stopping time. Adjoining \(Z_0\) at time \(n+1\)
makes
\[
 \xi_1\mathbf1_{\{1\le\tau_a\}},\ldots,
 \xi_n\mathbf1_{\{n\le\tau_a\}},H_{\tau_a}^{1/2}Z_0
\]
a martingale difference sequence with sum \(\widetilde S^{(a)}\).
At stopped steps and at the terminal conditional Gaussian step, Lindeberg swapping errors are zero.

Write \(\Delta=\Sigma-\bar\Sigma\). Monotonicity and Markov's inequality give
\begin{equation}\label{eq:thm23-tau-prob}
\begin{aligned}
 \{\tau_a<n\}
 &=\{\gamma^2I_d-\Delta\not\succeq0\}
 \subseteq\{\|\Delta\|_2>\gamma^2\},\\
 \mathbb P(\tau_a<n)^{2/(3p)}
 &\le\gamma^{-2}\|\Delta\|_{3p/2,2}.
\end{aligned}
\end{equation}
Independent Gaussian addition and the triangle inequality yield
\[
\begin{aligned}
 &\mcl W_{p,r}\bigl(\mcl L(S_n),N(0,\bar\Sigma)\bigr)\\
 &\le\mcl W_{p,r}\bigl(\mcl L(\widetilde S^{(a)}),
                         N(0,\bar\Sigma+\gamma^2I_d)\bigr)\\
 &\quad+\|S_n-S_{\tau_a}\|_{p,r}
       +\|H_{\tau_a}^{1/2}Z_0\|_{p,r}+\gamma\|Z\|_{p,r}.
\end{aligned}
\]
By bounded optional sampling, conditional Jensen and H\"older,
\begin{equation}\label{eq:thm23-tail-bound}
\begin{aligned}
 S_{\tau_a}&=\E(S_n\mid\mathcal F_{\tau_a}),\qquad
 \|S_n-S_{\tau_a}\|_{3p,r}\le2\|S_n\|_{3p,r},\\
 \|S_n-S_{\tau_a}\|_{p,r}
 &\le\|S_n-S_{\tau_a}\|_{3p,r}\mathbb P(\tau_a<n)^{2/(3p)}\\
 &\le2\|S_n\|_{3p,r}\gamma^{-2}\|\Delta\|_{3p/2,2}.
\end{aligned}
\end{equation}
Conditionally on \(\mathcal F_n\), independent Gaussian addition and the
convexity of \(|\cdot|_r^p\) give covariance monotonicity for every \(r\).
On \(\{\tau_a=n\}\),
\[
\begin{aligned}
 0\preceq H_{\tau_a}
 &=\gamma^2I_d-\Delta\preceq(\gamma^2+\|\Delta\|_2)I_d,\\
 \|\mathbf1_{\{\tau_a=n\}}H_{\tau_a}^{1/2}Z_0\|_{p,r}
 &\le\|Z\|_{p,r}
 \left\|\mathbf1_{\{\tau_a=n\}}(\gamma^2+\|\Delta\|_2)^{1/2}\right\|_p\\
 &\le\|Z\|_{p,r}
 \left\{\gamma+(\E\|\Delta\|_2^{p/2})^{1/p}\right\}.
\end{aligned}
\]
On \(\{\tau_a<n\}\), \(0\preceq H_{\tau_a}\preceq\bar\Sigma+\gamma^2I_d\).
Since \(\mathbb P(\tau_a<n)^{1/p}\le
\mathbb P(\tau_a<n)^{2/(3p)}\le1\), \cref{eq:thm23-tau-prob} gives
\[
\begin{aligned}
 \|\mathbf1_{\{\tau_a<n\}}H_{\tau_a}^{1/2}Z_0\|_{p,r}
 &\le\mathbb P(\tau_a<n)^{1/p}
 \left\{\|\bar\Sigma^{1/2}Z\|_{p,r}+\gamma\|Z\|_{p,r}\right\}\\
 &\le\gamma^{-2}\|\bar\Sigma^{1/2}Z\|_{p,r}\|\Delta\|_{3p/2,2}
       +\gamma\|Z\|_{p,r}.
\end{aligned}
\]
Combining these estimates,
\begin{equation}\label{eq:thm23-comparison}
\begin{aligned}
 &\|S_n-S_{\tau_a}\|_{p,r}
       +\|H_{\tau_a}^{1/2}Z_0\|_{p,r}+\gamma\|Z\|_{p,r}\\
 &\le C\|Z\|_{p,r}(\E\|\Delta\|_2^{p/2})^{1/p}
       +C\gamma\|Z\|_{p,r}
       +C\gamma^{-2}R_{p,r}\|\Delta\|_{3p/2,2}.
\end{aligned}
\end{equation}

For \(a=\alpha\), the augmented tail covariance on \(\{k\le\tau_\alpha\}\) is
\[
\begin{aligned}
 H_{k-1}&\succeq\frac{\alpha(n-k+1)}n\bar\Sigma\succ0,\\
 \|H_{k-1}^{-1/2}\xi_k\mid\mathcal F_{k-1}\|_{3p,2}^2
 &\le\frac n{\alpha(n-k+1)}
       \|\bar\Sigma^{-1/2}\xi_k\mid\mathcal F_{k-1}\|_{3p,2}^2.
\end{aligned}
\]
The conditional Lindeberg proof of \cref{thm:3}, with
\(M=0\), therefore gives
\begin{equation}\label{eq:thm23-elliptic-stopped-bound}
\begin{aligned}
 &\mcl W_{p,r}\bigl(\mcl L(\widetilde S^{(\alpha)}),
                   N(0,\bar\Sigma+\gamma^2I_d)\bigr)\\
 &\le\frac{Cp}{\alpha}\sum_{k=1}^n\frac n{n-k+1}
 \left\|\|\xi_k\mid\mathcal F_{k-1}\|_{3p,r}
 \|\bar\Sigma^{-1/2}\xi_k\mid\mathcal F_{k-1}\|_{3p,2}^2\right\|_p.
\end{aligned}
\end{equation}
For \(a=0\), predictable stopping only decreases \(A_{p,r}\).
The proof of \cref{cor:1}, with \(M=\sigma^2I_d\succ0\), yields
\begin{equation}\label{eq:thm23-modified-smoothing}
\begin{aligned}
 &\mcl W_{p,r}\bigl(\mcl L(\widetilde S^{(0)}),
                   N(0,\bar\Sigma+\gamma^2I_d)\bigr)\\
 &\le\inf_{\sigma>0}
 \left\{\frac{CpA_{p,r}}{\sigma^2}+2\sigma\|Z\|_{p,r}\right\}
 \le C(pA_{p,r})^{1/3}\|Z\|_{p,r}^{2/3}.
\end{aligned}
\end{equation}
Combine each auxiliary bound with \cref{eq:thm23-comparison}.
If \(R_{p,r}\|\Delta\|_{3p/2,2}>0\), choose
\[
\begin{aligned}
 \gamma&=\left(\frac{R_{p,r}\|\Delta\|_{3p/2,2}}{\|Z\|_{p,r}}\right)^{1/3},\\
 \gamma\|Z\|_{p,r}+\gamma^{-2}R_{p,r}\|\Delta\|_{3p/2,2}
 &=2\|Z\|_{p,r}^{2/3}
       \left(R_{p,r}\|\Delta\|_{3p/2,2}\right)^{1/3}.
\end{aligned}
\]
For a zero product, let \(\gamma\downarrow0\).
The remaining lower-moment term $\|Z\|_{p,r}(\E\|\Delta\|_2^{p/2})^{1/p}$ in \cref{eq:thm23-comparison} is removed by the following two bounds:
\[
\begin{aligned}
 &\text{if }\ \|Z\|_{p,r}\|\Delta\|_{3p/2,2}^{1/2}\le R_{p,r},\\
 &\qquad
 \|Z\|_{p,r}(\E\|\Delta\|_2^{p/2})^{1/p}
 \le\|Z\|_{p,r}\|\Delta\|_{3p/2,2}^{1/2}
 \le\|Z\|_{p,r}^{2/3}
       \left(R_{p,r}\|\Delta\|_{3p/2,2}\right)^{1/3};\\[2pt]
 &\text{if }\ \|Z\|_{p,r}\|\Delta\|_{3p/2,2}^{1/2}>R_{p,r},\\
 &\qquad
 \mcl W_{p,r}\bigl(\mcl L(S_n),N(0,\bar\Sigma)\bigr)
 \le\|S_n\|_{p,r}+\|\bar\Sigma^{1/2}Z\|_{p,r}
 \le R_{p,r}\\
 &\qquad\hspace{3em}
 \le\|Z\|_{p,r}^{2/3}
       \left(R_{p,r}\|\Delta\|_{3p/2,2}\right)^{1/3}.
\end{aligned}
\]
The second bound uses an independent coupling. 
Finally, combining \cref{eq:thm23-elliptic-stopped-bound} and \cref{eq:thm23-modified-smoothing} with the above upper bounds on \cref{eq:thm23-comparison} proves \cref{eq:thm23-elliptic,eq:thm23-smoothing}.
\end{proof}

\section{Lemmas and their proofs}\label{sec:lem}

\begin{proof}[Proof of \cref{lem:gaussian-norm-bound}]
Let \(A=\Sigma^{1/2}\), and let \(Z\sim N(0,I_d)\). Then
\(X\stackrel{d}{=}AZ\).

We first consider \(1\le r<\infty\). Set
\[
S_r:=
\left(\sum_{j=1}^d\Sigma_{jj}^{r/2}\right)^{1/r},
\qquad
T_r:=\|A\|_{2\to r}.
\]
Define
\[
f(z):=|Az|_r,\qquad z\in\mathbb R^d.
\]
Then \(f\) is \(T_r\)-Lipschitz with respect to the Euclidean norm. Indeed, for all
\(z,z'\in\mathbb R^d\),
\[
|f(z)-f(z')|
\le |A(z-z')|_r
\le \|A\|_{2\to r}|z-z'|_2
=
T_r|z-z'|_2.
\]
If \(T_r=0\), then \(AZ=0\) almost surely and the desired estimate is trivial. Otherwise,
by the Gaussian concentration inequality for Lipschitz functions,
\[
\mathbb P\{f(Z)-\mathbb Ef(Z)>t\}
\le
\exp\left(-\frac{t^2}{2T_r^2}\right),
\qquad t>0.
\]
Therefore, for \(p\ge1\),
\[
\begin{aligned}
\mathbb E\left[(f(Z)-\mathbb Ef(Z))_+^p\right]
&=
\int_0^\infty
p t^{p-1}
\mathbb P\{f(Z)-\mathbb Ef(Z)>t\}\,dt        \\
&\le
\int_0^\infty
p t^{p-1}
\exp\left(-\frac{t^2}{2T_r^2}\right)\,dt     \\
&=
2^{p/2}\Gamma(p/2+1)T_r^p.
\end{aligned}
\]
Using the standard bound
\[
\left(2^{p/2}\Gamma(p/2+1)\right)^{1/p}\le C\sqrt p,
\]
we obtain
\[
\|(f(Z)-\mathbb Ef(Z))_+\|_p
\le
C\sqrt p\,T_r.
\]
Since
\[
f(Z)\le \mathbb Ef(Z)+(f(Z)-\mathbb Ef(Z))_+,
\]
we have
\ben{\label{eq:concentration}
\|X\|_{p,r}
=
\|f(Z)\|_p
\le
\mathbb Ef(Z)+C\sqrt p\,T_r.
}

It remains to bound \(\mathbb Ef(Z)\). By Jensen's inequality,
\[
\mathbb Ef(Z)
=
\mathbb E|X|_r
\le
\left(\mathbb E|X|_r^r\right)^{1/r}.
\]
Since \(X_j\sim N(0,\Sigma_{jj})\),
\[
\mathbb E|X|_r^r
=
\sum_{j=1}^d\mathbb E|X_j|^r
=
\mathbb E|g|^r
\sum_{j=1}^d\Sigma_{jj}^{r/2},
\]
where \(g\sim N(0,1)\). Hence
\[
\mathbb Ef(Z)
\le
\|g\|_r
\left(\sum_{j=1}^d\Sigma_{jj}^{r/2}\right)^{1/r}.
\]
Using the standard Gaussian moment bound \(\|g\|_r\le C\sqrt r\), we get
\[
\mathbb Ef(Z)
\le
C\sqrt r
\left(\sum_{j=1}^d\Sigma_{jj}^{r/2}\right)^{1/r}.
\]
Combining this with the concentration estimate gives
\[
\|X\|_{p,r}
\le
C\sqrt r
\left(\sum_{j=1}^d\Sigma_{jj}^{r/2}\right)^{1/r}
+
C\sqrt p\,\|\Sigma^{1/2}\|_{2\to r}.
\]

We now consider \(r=\infty\). Put
\[
\sigma^2:=\max_{1\le j\le d}\Sigma_{jj}.
\]
For every \(t>0\),
\[
\begin{aligned}
\mathbb E\|X\|_\infty
&=
\mathbb E\max_{1\le j\le d}|X_j|                                      \\
&\le
t\log\sum_{j=1}^d
\mathbb E\left(e^{X_j/t}+e^{-X_j/t}\right)                              \\
&=
t\log\sum_{j=1}^d
2\exp\left(\frac{\Sigma_{jj}}{2t^2}\right)                               \\
&\le
t\log(2d)+\frac{\sigma^2}{2t}.
\end{aligned}
\]
Choosing \(t=\sigma/\sqrt{2\log(2d)}\) yields
\[
\mathbb E\|X\|_\infty
\le
\sigma\sqrt{2\log(2d)}.
\]
Also, with $e_j$ denoting the $j$th coordinate vector,
\[
\|A\|_{2\to\infty}
=
\max_{1\le j\le d}|A e_j|_2=\max_{1\le j\le d}\sqrt{e_j^\top \Sigma e_j}
=
\sqrt{\max_{1\le j\le d}\Sigma_{jj}}
=
\sigma.
\]
Applying the same concentration argument in \cref{eq:concentration} to
$f(z)=|Az|_\infty$
gives
\[
\|(f(Z)-\mathbb Ef(Z))_+\|_p
\le
C\sqrt p\,\|A\|_{2\to\infty}
=
C\sqrt p\,\sigma.
\]
Therefore,
\[
\|X\|_{p,\infty}
\le
\mathbb E\|X\|_\infty+C\sqrt p\,\sigma
\le
C\sigma\left(\sqrt{\log(2d)}+\sqrt p\right).
\]
This proves the lemma.
\end{proof}

The following lemma is the starting point for bounding the $\mcl{W}_{p,r}$ distance. 
The usual case $r=2$ and $\Sigma=I_d$ used in \cite{LeNoPe15} and \cite{bonis2020stein} follows from the Otto-Villani theorem [\cite{otto2000generalization}].
Our proof for the general case follows from a modification of \cite[Proposition~3.1]{fang2024sharp}.

\begin{lemma}\label{lem:follmer}
Let $p\geq 1$, $r\in [1,\infty]$, and
let $\mu$ be a probability measure on $\mathbb R^d$ with finite \(p\)-th moment. Let $\Sigma$ be a symmetric, positive definite matrix.
We can construct two random vectors $W\sim\mu$ and $Z\sim N(0,I_d)$ defined on a common probability space such that
\[
\||W-\Sigma^{1/2} Z|_r\|_{L^p}\leq 2\int_0^\infty \|\rho_t(F_t)\|_{p,r} dt,
\]
where $Z_2\sim N(0, I_d)$ is independent of $W$ and
\[
F_t = e^{-t} W + \sqrt{1 - e^{-2t}} \Sigma^{1/2} Z_2, \quad \rho_t(F_t) = e^{-t} \mathbb{E} \left[\left. W - \frac{1}{\sqrt{e^{2t}-1}} \Sigma^{1/2} Z_2 \,\right|\, F_t\right].
\]
\end{lemma}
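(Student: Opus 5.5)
The plan is to reduce to the case $\Sigma=I_d$ and then run the Föllmer/Ornstein--Uhlenbeck interpolation argument of \cite[Proposition~3.1]{fang2024sharp}, tracking the $\ell_r$ cost throughout.

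\textbf{Reduction and interpolating path.} Set $\widetilde W:=\Sigma^{-1/2}W$. Since $\Sigma^{1/2}$ is a deterministic invertible linear map, it suffices to construct a coupling of $\widetilde W$ with $Z$ and measure the cost of $\Sigma^{1/2}(\widetilde W-Z)$ in $|\cdot|_r$. Consider the Ornstein--Uhlenbeck interpolation $\widetilde F_t=e^{-t}\widetilde W+\sqrt{1-e^{-2t}}Z_2$, which equals $\Sigma^{-1/2}F_t$. Let $\mu_t$ be its law, so that $\mu_0=\mathcal L(\widetilde W)$ and $\mu_t\to N(0,I_d)$. Its score has the Tweedie-type representation
\[
\nabla\log\mu_t(\widetilde F_t)=-\frac{1}{\sqrt{1-e^{-2t}}}\,\mathbb E[Z_2\mid\widetilde F_t].
\]
The marginals satisfy the continuity equation $\partial_t\mu_t+\nabla\cdot(\mu_t v_t)=0$ with velocity field
\[
v_t(x)=-x-\nabla\log\mu_t(x)=-\mathbb E\Bigl[e^{-t}\widetilde W-\tfrac{e^{-2t}}{\sqrt{1-e^{-2t}}}Z_2\,\Big|\,\widetilde F_t=x\Bigr].
\]
Multiplying by $\Sigma^{1/2}$ gives $-\Sigma^{-1/2}\rho_t(F_t)$, up to the exact normalization of the $Z_2$ coefficient; I expect the factor $e^{-t}$ to match after simplification, since $e^{-t}/\sqrt{e^{2t}-1}=e^{-2t}/\sqrt{1-e^{-2t}}$.

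\textbf{Coupling via the flow.} Following Otto--Villani, or more elementarily the flow-map argument, I would solve the ODE $\dot X_t=v_t(X_t)$ (equivalently, work on the time-reversed side) to produce a transport from $\mu_0$ toward $\mu_T$. Pushing forward by $\Sigma^{1/2}$ then gives
\[
\bigl\|\,|\Sigma^{1/2}X_0-\Sigma^{1/2}X_T|_r\bigr\|_{L^p}\le\int_0^T\bigl\|\,|\Sigma^{1/2}v_t(X_t)|_r\bigr\|_{L^p}\,dt
\]
by Minkowski's integral inequality. Because $X_t\sim\mu_t$, the integrand equals $\|\rho_t(F_t)\|_{p,r}$. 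Letting $T\to\infty$ and gluing couplings then gives a coupling of $W$ and $\Sigma^{1/2}Z$.

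\textbf{Main obstacle: regularity of $v_t$.} Near $t=0$ the field $v_t$ may not be Lipschitz, so the flow is not obviously well defined. I would first handle this by regularization: replace $W$ with $W+\epsilon$-Gaussian noise (equivalently, start the flow at a small time $s>0$), where $v_t$ is smooth with controlled growth. I would then use lower semicontinuity of Wasserstein distances as $s\to0$, together with the extra $W_{p,r}$ cost of $O(\epsilon)$. The factor $2$ in the statement leaves room for a cruder argument: one can use the superposition principle of Ambrosio--Gigli--Savaré, which allows nonunique flows, and this route could account for the constant. As a fallback, I would bound $\mathcal W_{p,r}(\mu_s,\mu_{s+h})\le\int_s^{s+h}\|v\|$ directly using the continuity-equation characterization of Benamou--Brenier type. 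That characterization holds for any norm cost via the dynamic formulation, and taking a limit gives the result.
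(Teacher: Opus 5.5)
Your proposal is correct but takes a different route from the paper. The paper never solves an ODE. It takes the F\"ollmer process $X$ for the law of $\Sigma^{-1/2}W$ and invokes a cited result that $B_s=X_s-\int_0^s u^{-1/2}\nabla\log f_u(X_u/\sqrt u)\,du$ is a Brownian motion, so $W-\Sigma^{1/2}B_1$ equals the integrated drift pathwise. It then combines Minkowski's inequality, the fact that $X_u/\sqrt u$ has the law of the interpolation at time $u$, Bonis's score identity, and the substitution $u=e^{-2t}$ (so $du/\sqrt u=2e^{-t}dt$). This gives $2\int_0^\infty e^{-t}\|\rho_t(F_t)\|_{p,r}\,dt$, which is where the factor $2$ comes from.

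You instead transport deterministically along the Ornstein--Uhlenbeck marginals (the Otto--Villani picture). Your velocity computation is right and the normalisation matches exactly: $\Sigma^{1/2}v_t(\Sigma^{-1/2}F_t)=-\rho_t(F_t)$. Your phrase ``gives $-\Sigma^{-1/2}\rho_t(F_t)$'' is a slip. Your argument yields the inequality with $2$ replaced by $1$, so you need neither the superposition principle nor any slack from the constant.

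The regularisation you sketch closes easily. For $t\ge s>0$ the density of $\mu_t$ is smooth and strictly positive, so $v$ is locally Lipschitz uniformly on $[s,T]\times B$ for compact $B$. Also $\int_s^T\!\int|v_t|\,d\mu_t\,dt<\infty$, since $\mu$ has a finite first moment. The standard representation theorem for the continuity equation (Ambrosio--Gigli--Savar\'e, Prop.~8.1.8) then gives characteristics defined for $\mu_s$-a.e.\ starting point with $(X_t)_\#\mu_s=\mu_t$. Next, $\mathcal W_{p,r}(\mathcal L(W),\mathcal L(F_s))\le\|F_s-W\|_{p,r}\to0$ as $s\to0$, and $\mathcal L(F_T)\to N(0,\Sigma)$ as $T\to\infty$. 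The triangle inequality plus existence of an optimal $\mathcal W_{p,r}$-coupling then produces $W$ and $Z$.

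In short, the paper outsources all well-posedness to one theorem about the F\"ollmer drift and gets an explicit coupling directly. Your route is self-contained modulo classical continuity-equation facts and gives a slightly better constant.
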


\begin{proof}
We follow the Föllmer-process argument used in
\cite[Proposition~3.1]{fang2024sharp}, which is a constructive version of the
score-based transportation bound of \cite{otto2000generalization}.

Let \(W_0\sim\mu\), let \(Z_2\sim N(0,I_d)\) be independent of \(W_0\), and put
\[
    \bar W:=\Sigma^{-1/2}W_0 .
\]
For \(s\in(0,1)\), define
\[
    X[s]:=\sqrt{s}\,\bar W+\sqrt{1-s}\,Z_2,
\]
and let \(f_s\) be the density of \(X[s]\) with respect to the standard Gaussian
measure \(N(0,I_d)\). For \(s\in(0,1)\), this density is smooth and strictly
positive.

Let \(X=(X_s)_{s\in[0,1]}\) be the Föllmer process associated with the law of
\(\bar W\). Equivalently, \(X_1\) has the same distribution as \(\bar W\), and
conditionally on \(X_1\), the process \(X\) is a \(d\)-dimensional Brownian
bridge from \(0\) to \(X_1\) on \([0,1]\). By
\cite[Theorem~2.1 and Proposition~2.5]{koike2026note}, we have, almost surely,
\[
    \int_0^1
    \frac{|\nabla\log f_u(X_u/\sqrt u)|_2}{\sqrt u}\,du<\infty,
\]
and
\[
    B_s
    :=
    X_s-\int_0^s
    \frac{\nabla \log f_u(X_u/\sqrt u)}{\sqrt u}\,du,
    \qquad 0\le s\le 1,
\]
is a standard Brownian motion. Hence
\[
    W:=\Sigma^{1/2}X_1\sim\mu,
    \qquad
    \Sigma^{1/2}Z:=\Sigma^{1/2}B_1\sim N(0,\Sigma),
\]
and
\[
    W-\Sigma^{1/2}Z
    =
    \int_0^1
    \frac{\Sigma^{1/2}\nabla\log f_u(X_u/\sqrt u)}{\sqrt u}\,du .
\]
This implies
\[
\begin{aligned}
    \big\||W-\Sigma^{1/2}Z|_r\big\|_{L^p}
    &\le
    \int_0^1
    \frac{
    \big\|\Sigma^{1/2}\nabla\log f_u(X_u/\sqrt u)\big\|_{p,r}
    }{\sqrt u}\,du .
\end{aligned}
\]
By the construction of the Föllmer process, \(X_u/\sqrt u\) has the same law as
\(X[u]\). Thus
\[
    \big\|\Sigma^{1/2}\nabla\log f_u(X_u/\sqrt u)\big\|_{p,r}
    =
    \big\|\Sigma^{1/2}\nabla\log f_u(X[u])\big\|_{p,r}.
\]
By the score representation for Gaussian convolutions in
\cite[Lemma~2]{bonis2020stein},
\[
    \nabla\log f_u(X[u])
    =
    \sqrt u\,
    \E\left[
        \bar W-\sqrt{\frac{u}{1-u}}\,Z_2
        \,\middle|\, X[u]
    \right].
\]
Multiplying by \(\Sigma^{1/2}\), we obtain
\[
    \Sigma^{1/2}\nabla\log f_u(X[u])
    =
    \sqrt u\,
    \E\left[
        W_0-\sqrt{\frac{u}{1-u}}\,\Sigma^{1/2}Z_2
        \,\middle|\, \Sigma^{1/2}X[u]
    \right].
\]
Now make the change of variables
\[
    u=e^{-2t}, \qquad t\in[0,\infty).
\]
Then
\[
    \Sigma^{1/2}X[u]
    =
    e^{-t}W_0+\sqrt{1-e^{-2t}}\,\Sigma^{1/2}Z_2
    =
    F_t,
\]
and
\[
    \sqrt{\frac{u}{1-u}}
    =
    \frac{1}{\sqrt{e^{2t}-1}}.
\]
Consequently,
\[
    \Sigma^{1/2}\nabla\log f_{e^{-2t}}(X[e^{-2t}])
    =
    e^{-t}
    \E\left[
        W_0-\frac{1}{\sqrt{e^{2t}-1}}\Sigma^{1/2}Z_2
        \,\middle|\, F_t
    \right]
    =
    \rho_t(F_t).
\]
Hence
\[
\begin{aligned}
    \big\||W-\Sigma^{1/2}Z|_r\big\|_{L^p}
    &\le
    \int_0^1
    \frac{
    \big\|\Sigma^{1/2}\nabla\log f_u(X[u])\big\|_{p,r}
    }{\sqrt u}\,du  \\
    &=
    2\int_0^\infty
    e^{-t}\|\rho_t(F_t)\|_{p,r}\,dt  \\
    &\le
    2\int_0^\infty
    \|\rho_t(F_t)\|_{p,r}\,dt .
\end{aligned}
\]

\end{proof}

\begin{lemma}\label{lem:1}
For \(\tau_s\) defined in \cref{eq:taus}, the conditional expectation satisfies
\[
\mathbb{E}\!\left(
\tau_s
\,\middle|\,
Z+e^{-s}\widetilde I_{d,s}^{-1/2}\widetilde\xi_{\Sigma,s}
\right)=0.
\]
\end{lemma}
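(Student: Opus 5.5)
The identity comes from a Gaussian Taylor expansion and the Bayes formula for the density of the sum. Fix $s$ and write
\[
u:=e^{-s}\widetilde I_{d,s}^{-1/2}\widetilde\xi_{\Sigma,s},\qquad Q:=Z+u,
\]
where $Z\sim N(0,I_d)$ is independent of $(\xi,\widetilde\xi_{\Sigma,s})$. Since $\widetilde\xi_{\Sigma,s}$ is bounded by $\eta_s(p)$, the vector $u$ is bounded. Hence the law of $Q$ has the density
\[
q(y)=\mathbb E[\phi(y-u)].
\]
To prove the claim, I will show that $\mathbb E[\tau_s\,h(Q)]=0$ for every bounded measurable $h$.

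\textbf{Step 1 (rewrite $\tau_s$).} Because $\phi$ is entire and $u$ is bounded, Taylor's formula in tensor form gives
\[
\phi(Z-u)=\sum_{k\ge0}\frac{(-1)^k}{k!}\langle u^{\otimes k},\nabla^k\phi(Z)\rangle,
\qquad
\phi(Z+u)=\sum_{k\ge0}\frac{1}{k!}\langle u^{\otimes k},\nabla^k\phi(Z)\rangle .
\]
The series in \cref{eq:taus}, with $k=0$ included, is exactly the second one. Therefore
\[
\tau_s=\xi\,\frac{\phi(Z+u)}{\phi(Z)}.
\]
This reading matches the earlier step in \cref{eq:bonis}, where the $k=1$ term produced $\xi\widetilde\xi_\Sigma^\top\widetilde I^{-1/2}Z$ through $\nabla\phi(z)/\phi(z)=-z$. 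In other words, $\tau_s$ is $\xi$ times a likelihood ratio. Before relying on this, I will check the sign convention once more against \cref{eq:bonis}; the argument goes through with either sign after reflecting $Z\mapsto -Z$.

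\textbf{Step 2 (change of variables).} Condition on $(\xi,u)$ and integrate over $Z$:
\[
\mathbb E[\tau_s h(Q)]=\mathbb E\Big[\xi\int \phi(z+u)\,h(z+u)\,dz\Big]=\mathbb E[\xi]\int\phi(y)h(y)\,dy=0 .
\]
The middle step substitutes $y=z+u$, which removes all dependence on $u$, and the last step uses $\mathbb E\xi=0$. Strictly this shows that $\tau_s$ is orthogonal to functions of the shifted variable $Z-u$ rather than $Z+u$, so I must be careful to match the conditioning variable. With the sign choice $Q=Z+u$ and $\tau_s=\xi\,\phi(Z-u)/\phi(Z)$, the change of variables is $y=z-u$, and the integral again equals $\mathbb E\xi\int\phi(y)h(y)\,dy=0$. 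The sign of the series is fixed by the sign of $\nabla^k\phi$ in the convention being used, so I will present the proof as this exact likelihood-ratio identity. I will then verify at first order that it reproduces \cref{eq:bonis}, since that equation is where the lemma is actually used.

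\textbf{Step 3 (justifying the exchanges).} I need Fubini and the term-by-term identification of the series. Since $|u|\le e^{-s}c_s^{-1/2}\eta_s(p)<\infty$, the Taylor series converges absolutely and locally uniformly, dominated by
\[
\sum_k \frac{|u|^k\,|\nabla^k\phi(Z)|}{k!\,\phi(Z)}\;\lesssim\; e^{|u||Z|+|u|^2},
\]
using Hermite polynomial bounds. Together with the finite moment of $\xi$, this domination justifies interchanging expectation and summation. I expect this domination estimate, along with pinning down the sign convention, to be the only delicate points.
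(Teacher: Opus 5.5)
Your argument is correct, and the first computation in your Step 2 already completes it. The hedge that follows is mistaken and should be deleted.

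\textbf{The hedge.} By \eqref{eq:taus}, which has no factor $(-1)^k$, your Step 1 gives unambiguously $\tau_s=\xi\,\phi(Z+u)/\phi(Z)=\xi\exp(-\langle u,Z\rangle-|u|^2/2)$. Its $k=1$ term is $-e^{-s}\xi\,\widetilde\xi_{\Sigma,s}^\top\widetilde I_{d,s}^{-1/2}Z$, which is exactly the term \eqref{eq:bonis} uses. Now condition on $\xi$, which determines $u$ and is independent of $Z$. This gives $\mathbb E[\tau_s h(Z+u)\mid\xi]=\xi\int\phi(z+u)h(z+u)\,dz=\xi\,\mathbb E h(Z)$, whose mean is zero. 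That is orthogonality to functions of $Z+u$, which is precisely the claim, not orthogonality to functions of $Z-u$.

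Your alternative $\tau_s=\xi\phi(Z-u)/\phi(Z)$ would instead give $\xi\int\phi(y)h(y+2u)\,dy$. This depends on $u$, so it does not vanish in general. The reflection $Z\mapsto -Z$ is also not a symmetry of the statement, because it sends the conditioning variable $Z+u$ to $-(Z'-u)$. So the sign is forced by the definition, not a matter of convention.

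\textbf{Comparison with the paper.} The paper proves the same identity from the test-function side. It Taylor-expands the Ornstein--Uhlenbeck semigroup $T_sh(x)=\mathbb E[h(e^{-s}x+\sqrt{1-e^{-2s}}Z)]$ around $a_s=\sqrt{1-e^{-2s}}\,\widetilde I_{d,s}^{-1/2}\widetilde\xi_{\Sigma,s}$. It then converts derivatives into Hermite weights via $\nabla^kT_sh(x)=(-1)^k(e^{2s}-1)^{-k/2}\mathbb E[\phi(Z)^{-1}\nabla^k\phi(Z)\,h(e^{-s}x+\sqrt{1-e^{-2s}}Z)]$, starting from $\mathbb E[\xi\,T_sh(0)]=0$. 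This is Bonis's expansion. It produces $\tau_s$ directly in the series form that is later estimated term by term in $\mathcal R_3$.

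Your density-side argument instead sums the series in closed form and uses translation invariance of Lebesgue measure. It is shorter and avoids both the derivative formula for $T_sh$ and its analyticity. It also makes your Step 3 more than necessary. The Taylor identity holds pointwise for every $u$ because $\phi$ is entire. Moreover, $\mathbb E[\phi(Z+u)/\phi(Z)\mid\xi]=1$ gives $\mathbb E|\tau_s|=\mathbb E|\xi|<\infty$. So neither a domination estimate nor the truncation defining $\widetilde\xi_{\Sigma,s}$ is needed for this lemma; the truncation matters only for the bound on $\mathcal R_3$.
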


\begin{proof}
For notational simplicity, write
\[
u_s:=\widetilde I_{d,s}^{-1/2}\widetilde\xi_{\Sigma,s},
\qquad
a_s:=\sqrt{1-e^{-2s}}\,u_s,
\qquad
\widetilde F_s:=Z+e^{-s}u_s.
\]
Because \(u_s\) is bounded by the truncation in
\(\widetilde\xi_{\Sigma,s}\), the series
\be{
1+\sum_{k=1}^{\infty}\frac{e^{-ks}}{k!}
\left\langle u_s^{\otimes k},
\frac{\nabla^k\phi(Z)}{\phi(Z)}\right\rangle
}
converges in $L^2$.

Given a bounded measurable function \(h:\mathbb R^d\to\mathbb R\) and \(s>0\),
define
\[
T_sh(x)=\mathbb E\left[
h\left(e^{-s}x+\sqrt{1-e^{-2s}}Z\right)
\right],
\qquad x\in\mathbb R^d.
\]
We have
\[
\nabla^kT_sh(x)
=
\frac{(-1)^k}{(e^{2s}-1)^{k/2}}
\mathbb E\left[
\frac{\nabla^k\phi(Z)}{\phi(Z)}
h\left(e^{-s}x+\sqrt{1-e^{-2s}}Z\right)
\right],
\]
and hence
\[
\begin{aligned}
0
&=\mathbb E[\xi T_sh(0)] \\
&=\mathbb E\left[\xi\left(
T_sh(a_s)
+\sum_{k=1}^{\infty}\frac{(-1)^k}{k!}
\left\langle a_s^{\otimes k},\nabla^kT_sh(a_s)\right\rangle
\right)\right] \\
&=\mathbb E\left[
h\left(e^{-s}a_s+\sqrt{1-e^{-2s}}Z\right)\xi
\left(
1+\sum_{k=1}^{\infty}\frac{e^{-ks}}{k!}
\left\langle u_s^{\otimes k},
\frac{\nabla^k\phi(Z)}{\phi(Z)}\right\rangle
\right)\right] \\
&=\mathbb E\left[
h\left(\sqrt{1-e^{-2s}}\,\widetilde F_s\right)\tau_s
\right].
\end{aligned}
\]
Since multiplication by the nonzero scalar \(\sqrt{1-e^{-2s}}\) is invertible,
the preceding identity, valid for every bounded measurable \(h\), proves
\(\mathbb E(\tau_s\mid\widetilde F_s)=0\).
\end{proof}

The following lemma quantifies the change in a conditional expectation caused by an
additive perturbation of the conditioning random vector. It will be used repeatedly in
the proof of \cref{prop:1}.

\begin{lemma}\label{lem:2}
Let $p\geq 1$ and $r\in [1,\infty]$.
Let $X,Y,Z$ be random vectors in $\mathbb{R}^d$ such that $Z \sim N(0,I_d)$ and $Z$ is independent of $(X,Y)$.
For a measurable function $H:\mathbb{R}^{3d}\to\mathbb{R}^d$ that is continuously differentiable in the $Z$-argument, assume that all terms in the estimates below are finite, and define
\[
G_t := Z+X+tY,\qquad t\in[0,1].
\]
Then
\besn{\label{eq:integralbound}
&\big\|\mathbb{E}[H(X,Y,Z)\mid Z+X+Y]
-\mathbb{E}[H(X,Y,Z)\mid Z+X]\big\|_{p,r} \\
&\le
\int_0^1 \Big(
\big\|\mathbb{E}\!\big[\tfrac{\partial H}{\partial Z}(X,Y,Z)\,\big|\,G_t\big]\,Y\big\|_{p,r}\\[2pt]
&\qquad\qquad
+\big\|\mathbb{E}\!\big[\tfrac{\partial H}{\partial Z}(X,Y,Z)\,Y\,\big|\,G_t\big]\big\|_{p,r}\\[2pt]
&\qquad\qquad
+\big\|\mathbb{E}\!\big(H(X,Y,Z)\,\langle Z,Y\rangle\,\big|\,G_t\big)\big\|_{p,r}\\[2pt]
&\qquad\qquad
+\big\|\mathbb{E}\!\big(H(X,Y,Z)Z^{\top}\,\big|\,G_t\big)\,Y\big\|_{p,r}\\[2pt]
&\qquad\qquad
+\big\|\mathbb{E}\!\big(\langle Y,Z\rangle\,\big|\,G_t\big)\,
        \mathbb{E}\!\big(H(X,Y,Z)\,\big|\,G_t\big)\big\|_{p,r}\\[2pt]
&\qquad\qquad
+\big\|\langle Y,\mathbb{E}[Z\,|\,G_t]\rangle\,
        \mathbb{E}\!\big(H(X,Y,Z)\,\big|\,G_t\big)\big\|_{p,r}
\Big)\,dt .
}
Moreover, for any
\[
\rho,a,c,\alpha_0,\beta_0,\alpha_1,\beta_1,\gamma_1,
\alpha_2,\beta_2,\gamma_2\in[1,\infty]
\]
satisfying
\[
\frac1p=\frac1a+\frac1c,\qquad
\frac1p=\frac1{\alpha_0}+\frac1{\beta_0},\qquad
\frac1p=\frac1{\alpha_1}+\frac1{\beta_1}+\frac1{\gamma_1},\qquad
\frac1p=\frac1{\alpha_2}+\frac1{\beta_2}+\frac1{\gamma_2},
\]
we have
\[
\begin{aligned}
&\left\|\mathbb{E}[H(X,Y,Z)\mid Z+X+Y]
-\mathbb{E}[H(X,Y,Z)\mid Z+X]\right\|_{p,r} \\
&\le
2\left\|
\frac{\partial H}{\partial Z}(X,Y,Z)
\right\|_{a,\rho\to r}
\|Y\|_{c,\rho} \\
&\quad+
4\|H(X,Y,Z)\|_{\alpha_0,r}
\|Y\|_{\beta_0,2}\|g\|_{\beta_0} \\
&\quad+
4\|H(X,Y,Z)\|_{\alpha_1,r}
\|X\|_{\beta_1,2}\|Y\|_{\gamma_1,2} \\
&\quad+
2\|H(X,Y,Z)\|_{\alpha_2,r}
\|Y\|_{\beta_2,2}\|Y\|_{\gamma_2,2},
\end{aligned}
\]
where \(\|g\|_{\beta_0}\) denotes the
\(L^{\beta_0}\)-norm of a one-dimensional standard Gaussian.

The same conclusions hold if $H$ also depends on an auxiliary random element $U$
such that $Z$ is independent of $(X,Y,U)$.
\end{lemma}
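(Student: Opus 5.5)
Interpolate between the two conditioning vectors. Put $G_t=Z+X+tY$ and $\Psi(t):=\mathbb E[H(X,Y,Z)\mid G_t]$, viewed as the random vector $\psi_t(G_t)$ with $\psi_t(x)=\mathbb E[H\mid G_t=x]$. The goal is to bound the difference $\Psi(1)-\Psi(0)$ of the two conditional expectations (realized on a common probability space via the map $t\mapsto\psi_t(G_t)$). By Minkowski's inequality for the Bochner integral, it then suffices to bound $\|\frac{d}{dt}\psi_t(G_t)\|_{p,r}$ pointwise in $t$. Since $Z$ is Gaussian and independent of $(X,Y)$, $G_t$ has the smooth positive density $q_t(x)=\mathbb E\phi(x-X-tY)$. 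Writing $\psi_t=N_t/q_t$ with $N_t(x)=\mathbb E[H(X,Y,x-X-tY)\phi(x-X-tY)]$, everything is explicit. I will first prove the bound for bounded, compactly supported $H$ and bounded $X,Y$, so that differentiation under the expectation is legitimate, and then pass to the general case by truncation and Fatou's lemma.

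\textbf{The six terms.} Differentiate $\psi_t(G_t)$ in $t$. There are two contributions: the explicit $t$-dependence of $N_t$ and $q_t$, and the chain rule through $G_t$, which contributes $\nabla\psi_t(G_t)\,Y$.

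From $\partial_t N_t$ we get, using $\partial_t\phi(x-X-tY)=\langle x-X-tY,Y\rangle\phi$, the terms $-\mathbb E[\partial_zH\,Y\mid G_t]$ and $\mathbb E[H\langle Z,Y\rangle\mid G_t]$. From $\partial_t q_t$ we get $-\mathbb E[\langle Y,Z\rangle\mid G_t]\,\psi_t(G_t)$.

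The gradient term is handled the same way. Using $\nabla_x[H\phi]=\partial_zH\,\phi - H Z^\top\phi$ and the analogous formula for $\nabla q_t$, it becomes
\[
\mathbb E[\partial_z H\mid G_t]\,Y-\mathbb E[HZ^\top\mid G_t]\,Y+\langle Y,\mathbb E[Z\mid G_t]\rangle\,\psi_t(G_t).
\]
Note that $Y$ stays outside the conditional expectation here, because it multiplies $\nabla\psi_t$ evaluated at $G_t$. Collecting these terms and taking norms gives exactly the six terms of \cref{eq:integralbound}.

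\textbf{The explicit bound.} Treat each of the six terms in turn.

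\begin{enumerate}
\item The two $\partial_zH$ terms. For the term with $Y$ inside the conditional expectation, use conditional Jensen and then H\"older: $|\partial_zH\,Y|_r\le\|\partial_zH\|_{\rho\to r}|Y|_\rho$. The term with $Y$ outside is bounded by H\"older directly. This gives the factor $2$ in front of $\|\partial_zH\|_{a,\rho\to r}\|Y\|_{c,\rho}$.
\item $\mathbb E[H\langle Z,Y\rangle\mid G_t]$. By conditional Jensen, it is at most $\|\,|H|_r\,|\langle Z,Y\rangle|\,\|_p$. H\"older and the independence of $Z$ from $Y$ give $\|\langle Z,Y\rangle\|_{\beta_0}=\|Y\|_{\beta_0,2}\|g\|_{\beta_0}$. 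The same argument handles $\mathbb E[\langle Y,Z\rangle\mid G_t]\psi_t$, after applying Jensen jointly.
\item $\mathbb E[HZ^\top\mid G_t]Y$ and $\langle Y,\mathbb E[Z\mid G_t]\rangle\psi_t$. Here $Y$ sits outside the conditioning, so Jensen cannot be applied directly. Rewrite $Z=G_t-X-tY$. The $G_t$-part is measurable, so it combines with the other terms and cancels between the pair: the two terms share the factor $G_t$ with opposite signs. What remains is $(X+tY)$-weighted expressions. These give the $\|H\|_{\alpha_1,r}\|X\|_{\beta_1,2}\|Y\|_{\gamma_1,2}$ and $\|H\|_{\alpha_2,r}\|Y\|_{\beta_2,2}\|Y\|_{\gamma_2,2}$ terms via three-factor H\"older, and the remaining $\|g\|$-term as in item 2.
\end{enumerate}

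Keeping track of these occurrences gives the stated constants $2,4,4,2$.

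\textbf{Main obstacle.} I expect the hardest step to be item 3. One must verify that the $G_t$-parts really cancel between those two terms, rather than leaving a $\|G_t\|$-type factor, which would be unbounded relative to the claimed estimate. I would first check this by writing $\mathbb E[HZ^\top\mid G_t]-\mathbb E[Z\mid G_t]^\top\psi_t$, which is a conditional covariance, and expanding $Z=G_t-X-tY$ inside it. The $G_t$ contributions cancel exactly by the covariance structure.

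The auxiliary-$U$ extension is immediate, since $U$ enters only through $H$ and remains independent of $Z$.
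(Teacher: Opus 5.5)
Your derivation of the six-term bound \eqref{eq:integralbound} is the paper's. The paper writes $\mathbb{E}[H\mid G_t]=f(t)/g(t)$ with $z_t=G_t-x'-ty'$, and $\frac{d}{dt}z_t=Y-y'$ packages your split into the explicit $t$-dependence ($Y$ inside the conditional expectations) and the chain rule through $G_t$ ($Y$ outside).

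The routes differ for the explicit bound. The paper derives it from \eqref{eq:integralbound}, where the six terms are already normed separately, so no cancellation is available. It substitutes $Z=G_t-X-tY$ separately in $\mathbb{E}[HZ^\top\mid G_t]Y$ and in $\langle Y,\mathbb{E}[Z\mid G_t]\rangle\,\mathbb{E}[H\mid G_t]$. It then re-expands the leftover factor $G_t^\top Y=Z^\top Y+X^\top Y+tY^\top Y$, and these extra contributions are what produce the constants $4,4,2$. So your concern that an uncancelled $G_t$ would be ``unbounded relative to the claimed estimate'' is misplaced: $G_t=Z+X+tY$ is controlled by the same norms.

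Your pairing $-\mathrm{Cov}(H,Z\mid G_t)Y=\mathrm{Cov}(H,X+tY\mid G_t)Y$ is nonetheless correct. You must apply it to the pointwise identity for $\frac{d}{dt}\mathbb{E}[H\mid G_t]$ before taking norms, not to \eqref{eq:integralbound}. It yields the sharper constants $2,2,2,1$. No $Z$ survives the cancellation, so your item 3 contributes no $\|g\|_{\beta_0}$-term, contrary to what you wrote. Your claim of recovering exactly $2,4,4,2$ is therefore a miscount, though a harmless one, since smaller constants imply the stated inequality. The paper's route has the advantage that the explicit bound follows mechanically from the first assertion; yours gives better constants.

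Your truncation-plus-Fatou justification of the differentiation goes beyond the paper, which omits this step. Note only that truncating $X,Y$ changes the conditioning variable. The limit passage therefore needs convergence of the conditional expectations themselves, not only of the right-hand sides; this can be obtained from the explicit Gaussian-convolution ratio $N_t/q_t$.
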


\begin{proof}
The proof of the lemma is inspired by \cite[Lemma~6.3]{bonis2020stein}.
We give the proof without $U$. For the auxiliary-variable version, one replaces the
joint law of $(X,Y)$ below by that of $(X,Y,U)$ and carries the $U$-coordinate as an
unchanged argument of $H$ throughout.
As another remark, the differentiations below can be justified under the stated integrability assumptions and we omit the technical discussions.

Write
\[
\begin{aligned}
&\mathbb{E}[H(X,Y,Z)\mid Z+X+Y]
-\mathbb{E}[H(X,Y,Z)\mid Z+X] \\
&\qquad
=
\int_0^1
\frac{d}{dt}\,
\mathbb{E}[H(X,Y,Z)\mid G_t]\,dt .
\end{aligned}
\]

Let $\nu$ be the joint law of $(X,Y)$ on $\mathbb{R}^{2d}$.
For $t\in[0,1]$, define
\[
z_t(x',y'):=Z+(X-x')+t(Y-y')=G_t-x'-ty'.
\]
Set
\[
\begin{aligned}
f(t)
&=
\int
H\!\left(x',y',z_t(x',y')\right)
\phi\!\left(z_t(x',y')\right)\,d\nu(x',y'),\\
g(t)
&=
\int
\phi\!\left(z_t(x',y')\right)\,d\nu(x',y').
\end{aligned}
\]
Then
\[
\frac{f(t)}{g(t)}
=
\mathbb{E}\!\left(H(X,Y,Z)\mid G_t\right).
\]

Differentiating $f$ and $g$ gives
\[
\begin{aligned}
f'(t)
&=
\int
\Big[
\frac{\partial H}{\partial Z}\!\left(x',y',z_t(x',y')\right)(Y-y') \\
&\qquad\qquad
-
H\!\left(x',y',z_t(x',y')\right)
\left\langle z_t(x',y'),Y-y'\right\rangle
\Big]
\phi\!\left(z_t(x',y')\right)\,d\nu(x',y'),
\end{aligned}
\]
and
\[
g'(t)
=
-\int
\left\langle z_t(x',y'),Y-y'\right\rangle
\phi\!\left(z_t(x',y')\right)\,d\nu(x',y').
\]
Hence, by the definition of conditional expectation,
\[
\begin{aligned}
\frac{f'(t)}{g(t)}
&=
\mathbb{E}\!\left(\left.\frac{\partial H}{\partial Z}(X,Y,Z)\right|G_t\right)Y
-
\mathbb{E}\!\left(\left.\frac{\partial H}{\partial Z}(X,Y,Z)Y\right|G_t\right)\\
&\quad
+
\mathbb{E}\!\left(H(X,Y,Z)\langle Z,Y\rangle\mid G_t\right)
-
\mathbb{E}\!\left(H(X,Y,Z)Z^\top\mid G_t\right)Y,
\end{aligned}
\]
and
\[
\frac{g'(t)}{g(t)}
=
\mathbb{E}\!\left(\langle Y,Z\rangle\mid G_t\right)
-
\left\langle Y,\mathbb{E}\!\left(Z\mid G_t\right)\right\rangle .
\]
Therefore,
\[
\begin{aligned}
\frac{d}{dt}\mathbb{E}\!\left(H(X,Y,Z)\mid G_t\right)
&=
\left(\frac{f}{g}\right)'(t)\\
&=
\mathbb{E}\!\left(\left.\frac{\partial H}{\partial Z}(X,Y,Z)\right|G_t\right)Y
-
\mathbb{E}\!\left(\left.\frac{\partial H}{\partial Z}(X,Y,Z)Y\right|G_t\right)\\
&\quad
+
\mathbb{E}\!\left(H(X,Y,Z)\langle Z,Y\rangle\mid G_t\right)
-
\mathbb{E}\!\left(H(X,Y,Z)Z^\top\mid G_t\right)Y\\
&\quad
-
\mathbb{E}\!\left(\langle Y,Z\rangle\mid G_t\right)
\mathbb{E}\!\left(H(X,Y,Z)\mid G_t\right)\\
&\quad
+
\left\langle Y,\mathbb{E}\!\left(Z\mid G_t\right)\right\rangle
\mathbb{E}\!\left(H(X,Y,Z)\mid G_t\right).
\end{aligned}
\]
Consequently, by the integral Minkowski inequality and the triangle inequality,
\[
\begin{aligned}
&\big\|\mathbb{E}[H(X,Y,Z)\mid Z+X+Y]
-\mathbb{E}[H(X,Y,Z)\mid Z+X]\big\|_{p,r}\\
&\le
\int_0^1
\left\|
\frac{d}{dt}\mathbb{E}\!\left(H(X,Y,Z)\mid G_t\right)
\right\|_{p,r}\,dt\\
&\le
\int_0^1 \Big(
\big\|\mathbb{E}\!\big[\tfrac{\partial H}{\partial Z}(X,Y,Z)\,\big|\,G_t\big]\,Y\big\|_{p,r}
+
\big\|\mathbb{E}\!\big[\tfrac{\partial H}{\partial Z}(X,Y,Z)Y\,\big|\,G_t\big]\big\|_{p,r}\\
&\qquad\quad
+
\big\|\mathbb{E}\!\big(H(X,Y,Z)\langle Z,Y\rangle\,\big|\,G_t\big)\big\|_{p,r}
+
\big\|\mathbb{E}\!\big(H(X,Y,Z)Z^\top\,\big|\,G_t\big)Y\big\|_{p,r}\\
&\qquad\quad
+
\big\|\mathbb{E}\!\big(\langle Y,Z\rangle\,\big|\,G_t\big)
        \mathbb{E}\!\big(H(X,Y,Z)\,\big|\,G_t\big)\big\|_{p,r}\\
&\qquad\quad
+
\big\|\langle Y,\mathbb{E}[Z\,|\,G_t]\rangle
        \mathbb{E}\!\big(H(X,Y,Z)\,\big|\,G_t\big)\big\|_{p,r}
\Big)\,dt .
\end{aligned}
\]
This proves the first assertion.

For the additional estimate, we start from the preceding integral bound \cref{eq:integralbound}.
First, we bound the two terms involving \(\partial_ZH\) using the
\(\rho\to r\) operator norm. By conditional Jensen's inequality and
Hölder's inequality,
\[
\begin{aligned}
&\left\|
\mathbb{E}\!\left[
\left.\frac{\partial H}{\partial Z}(X,Y,Z)\right|G_t
\right]Y
\right\|_{p,r} \\
&\qquad\le
\left\|
\mathbb{E}\!\left[
\left.
\left\|
\frac{\partial H}{\partial Z}(X,Y,Z)
\right\|_{\rho\to r}
\right|G_t
\right]
|Y|_\rho
\right\|_p \\
&\qquad\le
\left\|
\frac{\partial H}{\partial Z}(X,Y,Z)
\right\|_{a,\rho\to r}
\|Y\|_{c,\rho}.
\end{aligned}
\]
Similarly,
\[
\begin{aligned}
\left\|
\mathbb{E}\!\left[
\left.
\frac{\partial H}{\partial Z}(X,Y,Z)Y
\right|G_t
\right]
\right\|_{p,r}
&\le
\left\|
\frac{\partial H}{\partial Z}(X,Y,Z)Y
\right\|_{p,r} \\
&\le
\left\|
\frac{\partial H}{\partial Z}(X,Y,Z)
\right\|_{a,\rho\to r}
\|Y\|_{c,\rho}.
\end{aligned}
\]
Therefore,
\[
\begin{aligned}
&\left\|
\mathbb{E}\!\left[
\left.\frac{\partial H}{\partial Z}(X,Y,Z)\right|G_t
\right]Y
\right\|_{p,r}
+
\left\|
\mathbb{E}\!\left[
\left.
\frac{\partial H}{\partial Z}(X,Y,Z)Y
\right|G_t
\right]
\right\|_{p,r} \\
&\qquad\le
2\left\|
\frac{\partial H}{\partial Z}(X,Y,Z)
\right\|_{a,\rho\to r}
\|Y\|_{c,\rho}.
\end{aligned}
\]

Next, using
\[
Z=G_t-X-tY
\]
and the \(G_t\)-measurability of \(G_t\), we have
\[
\begin{aligned}
\mathbb{E}\!\big(H(X,Y,Z)Z^\top\mid G_t\big)Y
&=
G_t^\top Y\,\mathbb{E}\!\big(H(X,Y,Z)\mid G_t\big) \\
&\quad
-\mathbb{E}\!\big(H(X,Y,Z)X^\top\mid G_t\big)Y \\
&\quad
-t\,\mathbb{E}\!\big(H(X,Y,Z)Y^\top\mid G_t\big)Y,
\end{aligned}
\]
and
\[
\begin{aligned}
Y^\top\mathbb{E}[Z\mid G_t]\,
\mathbb{E}\!\big(H(X,Y,Z)\mid G_t\big)
&=
G_t^\top Y\,\mathbb{E}\!\big(H(X,Y,Z)\mid G_t\big) \\
&\quad
-Y^\top\mathbb{E}[X\mid G_t]\,
\mathbb{E}\!\big(H(X,Y,Z)\mid G_t\big) \\
&\quad
-t\,Y^\top\mathbb{E}[Y\mid G_t]\,
\mathbb{E}\!\big(H(X,Y,Z)\mid G_t\big).
\end{aligned}
\]
After inserting these identities and splitting
\[
G_t^\top Y=Z^\top Y+X^\top Y+tY^\top Y,
\]
we will estimate the remaining terms in \cref{eq:integralbound} by conditional Jensen's inequality and
Hölder's inequality.

First,
\[
\begin{aligned}
&\big\|
\mathbb{E}\!\big(H(X,Y,Z)Y^\top Z\mid G_t\big)
\big\|_{p,r}
+
\big\|
\mathbb{E}\!\big(Y^\top Z\mid G_t\big)
\mathbb{E}\!\big(H(X,Y,Z)\mid G_t\big)
\big\|_{p,r} \\
&\quad
+
2\big\|
Y^\top Z\,\mathbb{E}\!\big(H(X,Y,Z)\mid G_t\big)
\big\|_{p,r} \\
&\qquad\le
4\|H(X,Y,Z)\|_{\alpha_0,r}
\|Y^\top Z\|_{\beta_0}.
\end{aligned}
\]
Since \(Z\) is independent of \(Y\) and \(Z\sim N(0,I_d)\),
\[
\|Y^\top Z\|_{\beta_0}
=
\|Y\|_{\beta_0,2}\|g\|_{\beta_0}.
\]
Hence
\[
\begin{aligned}
&\big\|
\mathbb{E}\!\big(H(X,Y,Z)Y^\top Z\mid G_t\big)
\big\|_{p,r}
+
\big\|
\mathbb{E}\!\big(Y^\top Z\mid G_t\big)
\mathbb{E}\!\big(H(X,Y,Z)\mid G_t\big)
\big\|_{p,r} \\
&\quad
+
2\big\|
Y^\top Z\,\mathbb{E}\!\big(H(X,Y,Z)\mid G_t\big)
\big\|_{p,r} \\
&\qquad\le
4\|H(X,Y,Z)\|_{\alpha_0,r}
\|Y\|_{\beta_0,2}\|g\|_{\beta_0}.
\end{aligned}
\]

Second,
\[
\begin{aligned}
&2\left\|
Y^\top X\,\mathbb{E}\!\left(H(X,Y,Z)\mid G_t\right)
\right\|_{p,r}
+
\left\|
\mathbb{E}\!\left(H(X,Y,Z)X^\top\mid G_t\right)Y
\right\|_{p,r} \\
&\quad
+
\left\|
Y^\top\mathbb{E}[X\mid G_t]\,
\mathbb{E}\!\left(H(X,Y,Z)\mid G_t\right)
\right\|_{p,r} \\
&\qquad\le
4\|H(X,Y,Z)\|_{\alpha_1,r}
\|X\|_{\beta_1,2}
\|Y\|_{\gamma_1,2}.
\end{aligned}
\]

Third,
\[
\begin{aligned}
&2t\left\|
Y^\top Y\,\mathbb{E}\!\left(H(X,Y,Z)\mid G_t\right)
\right\|_{p,r}
+
t\left\|
\mathbb{E}\!\left(H(X,Y,Z)Y^\top\mid G_t\right)Y
\right\|_{p,r} \\
&\quad
+
t\left\|
Y^\top\mathbb{E}[Y\mid G_t]\,
\mathbb{E}\!\left(H(X,Y,Z)\mid G_t\right)
\right\|_{p,r} \\
&\qquad\le
4t\|H(X,Y,Z)\|_{\alpha_2,r}
\|Y\|_{\beta_2,2}
\|Y\|_{\gamma_2,2}.
\end{aligned}
\]

Combining these estimates and integrating over \(t\in[0,1]\), we obtain
\[
\begin{aligned}
&\left\|\mathbb{E}[H(X,Y,Z)\mid Z+X+Y]
-\mathbb{E}[H(X,Y,Z)\mid Z+X]\right\|_{p,r} \\
&\le
\int_0^1
\Bigg[
2\left\|
\frac{\partial H}{\partial Z}(X,Y,Z)
\right\|_{a,\rho\to r}
\|Y\|_{c,\rho} \\
&\qquad\qquad
+
4\|H(X,Y,Z)\|_{\alpha_0,r}
\|Y\|_{\beta_0,2}\|g\|_{\beta_0} \\
&\qquad\qquad
+
4\|H(X,Y,Z)\|_{\alpha_1,r}
\|X\|_{\beta_1,2}\|Y\|_{\gamma_1,2} \\
&\qquad\qquad
+
4t\|H(X,Y,Z)\|_{\alpha_2,r}
\|Y\|_{\beta_2,2}\|Y\|_{\gamma_2,2}
\Bigg]\,dt \\
&=
2\left\|
\frac{\partial H}{\partial Z}(X,Y,Z)
\right\|_{a,\rho\to r}
\|Y\|_{c,\rho} \\
&\quad
+
4\|H(X,Y,Z)\|_{\alpha_0,r}
\|Y\|_{\beta_0,2}\|g\|_{\beta_0} \\
&\quad
+
4\|H(X,Y,Z)\|_{\alpha_1,r}
\|X\|_{\beta_1,2}\|Y\|_{\gamma_1,2} \\
&\quad
+
2\|H(X,Y,Z)\|_{\alpha_2,r}
\|Y\|_{\beta_2,2}\|Y\|_{\gamma_2,2}.
\end{aligned}
\]
This proves the additional assertion.
\end{proof}

The following lemma provides a Rosenthal--Burkholder bound for a
martingale sum. It can be used to bound $\|S_n\|_{3p,r}$ in applying \cref{thm:4}.

\begin{lemma}[Rosenthal--Burkholder inequality]
\label{lem:4}
Let \((\xi_i,\mathcal F_i)_{i=1}^n\) be an \(\mathbb R^d\)-valued martingale
difference sequence. Let \(q\ge2\) and \(r\in[1,\infty]\). Define
\[
    r_\circ:=
    \begin{cases}
        2, & 1\le r<2,\\
        r, & 2\le r\le\infty,
    \end{cases}
    \qquad
    C_r:=
    \begin{cases}
        d^{1/r-1/2}, & 1\le r<2,\\
        1, & 2\le r\le\infty,
    \end{cases}
\]
and
\[
    \kappa_r:=
    \begin{cases}
        1, & 1\le r\le2,\\
        \sqrt{r-1}, & 2<r<\infty,\\
        \sqrt{\log(ed)}, & r=\infty .
    \end{cases}
\]
Then
\[
\begin{aligned}
    \left\|
        \sum_{i=1}^n \xi_i
    \right\|_{q,r}
    \le\;&
    C C_r
    \bigg\{
        q\|\max_{1\le i\le n}|\xi_i|_{r_\circ}\|_q
        \\
    &\qquad\qquad
        +
        \sqrt q\,\kappa_r
        \left\|
            \left(
                \sum_{i=1}^n
                \E\bigl[|\xi_i|_{r_\circ}^2\mid\mathcal F_{i-1}\bigr]
            \right)^{1/2}
        \right\|_q
    \bigg\},
\end{aligned}
\]
where \(C>0\) is a universal constant.
\end{lemma}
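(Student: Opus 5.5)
The plan is to reduce the $\ell_r$ bound to scalar martingale estimates through a comparison of norms, and then apply the classical Rosenthal--Burkholder inequality in the scalar or Hilbert case. We treat the ranges $1\le r<2$, $2\le r<\infty$ and $r=\infty$ separately.

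\emph{Case $1\le r<2$.} By Hölder's inequality, $|x|_r\le d^{1/r-1/2}|x|_2$. This produces the factor $C_r$ and replaces $r$ by $r_\circ=2$. For $\ell_2$-valued martingales, a Hilbert space is $(2,1)$-smooth, so Pinelis' Rosenthal inequality applies directly. It gives
\[
\Bigl\|\sum_i\xi_i\Bigr\|_{q,2}\le C\Bigl(q\,\|\max_i|\xi_i|_2\|_q+\sqrt q\,\Bigl\|\Bigl(\sum_i\E[|\xi_i|_2^2\mid\mathcal F_{i-1}]\Bigr)^{1/2}\Bigr\|_q\Bigr),
\]
which is the claim with $\kappa_r=1$.

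\emph{Case $2\le r<\infty$.} The space $\ell_r$ is $(2,\sqrt{r-1})$-smooth: the function $x\mapsto|x|_r^2$ has second derivative bounded by $2(r-1)$ times the squared norm. Pinelis' martingale Rosenthal inequality in $(2,D)$-smooth Banach spaces then yields the same display with $|\cdot|_r$ in place of $|\cdot|_2$ and an extra factor $D=\sqrt{r-1}$ in front of the square-function term. Here $r_\circ=r$.

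\emph{Case $r=\infty$.} We use the standard trick of comparing with $\ell_{r'}$ for $r'=\log(ed)\vee 2$. For $x\in\mathbb R^d$ we have $|x|_\infty\le|x|_{r'}\le d^{1/r'}|x|_\infty\le e|x|_\infty$. Applying the previous case with $r'$ gives the factor $\sqrt{r'-1}\le\sqrt{\log(ed)}$. Converting the $\ell_{r'}$ norms of $\xi_i$ back to $\ell_\infty$ costs only the constant $e$.

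The main obstacle is obtaining the sharp Pinelis-type Rosenthal inequality with constants linear in $q$ for the max term and $\sqrt q$ for the square-function term, uniformly in smoothness. We would cite Pinelis (1994, Theorem 4.1). If a self-contained argument is preferred, we would derive the inequality from the exponential (Bernstein/Freedman-type) tail inequality for martingales in $(2,D)$-smooth spaces. The steps are: truncate the increments at the level of $\max_i|\xi_i|$, then integrate the tail bound $\P(|S_n|>x,\ \text{square function}\le s^2)\le 2\exp\bigl(-x^2/(C D^2 s^2 + Cxb)\bigr)$, and finally use the good-$\lambda$ or Hitczenko decoupling argument to pass from bounded to general increments. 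This is the step where care is needed to keep the $q$ versus $\sqrt q$ dependence.
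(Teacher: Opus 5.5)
Your proposal is correct and follows essentially the same route as the paper: the same three-case split, Pinelis's $(2,\sqrt{r-1})$-smoothness of $\ell_r$ combined with his Theorem~4.1 for $2\le r<\infty$, comparison with $\ell_s$ for $s\approx\log d$ when $r=\infty$, and the H\"older comparison $|x|_r\le d^{1/r-1/2}|x|_2$ for $1\le r<2$. Your choice $s=\log(ed)\vee 2$ in place of the paper's $2\vee\log d$ makes no difference, and the self-contained Freedman/good-$\lambda$ alternative you sketch is not needed, since the paper simply cites Pinelis.
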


\begin{proof}
We first consider the case \(2\le r<\infty\). Apply the
Rosenthal--Burkholder inequality for martingales in \((2,D)\)-smooth Banach
spaces to
\[
    X=(\mathbb R^d,|\cdot|_r).
\]
For \(2\le r<\infty\), this space is
\((2,\sqrt{r-1})\)-smooth; see \cite[Proposition~2.1]{pinelis1994optimum}. Hence, by
\cite[Theorem~4.1]{pinelis1994optimum}, for \(q\ge2\),
\[
\begin{aligned}
    \left\|\sum_{i=1}^n\xi_i\right\|_{q,r}
    &\le
    \left\|\sup_{0\le j\le n}|\sum_{i=1}^j\xi_i|_r\right\|_q
    \\
    &\le
    C
    \bigg\{
        q\|\max_{1\le i\le n}|\xi_i|_r\|_q
        +
        \sqrt{q(r-1)}
        \left\|
            \left(
                \sum_{i=1}^n
                \E[|\xi_i|_r^2\mid\mathcal F_{i-1}]
            \right)^{1/2}
        \right\|_q
    \bigg\}.
\end{aligned}
\]
This proves the claimed bound when \(2\le r<\infty\).

Next consider \(r=\infty\). Let
\[
    s:=2\vee\log d .
\]
Then \(2\le s<\infty\), \(d^{1/s}\le e\), and \(s-1\le C\log(ed)\).
For every \(x\in\mathbb R^d\),
\[
    |x|_\infty\le |x|_s\le d^{1/s}|x|_\infty\le e|x|_\infty.
\]
Applying the already proved finite-\(r\) case with \(r=s\), we obtain
\[
\begin{aligned}
    \left\|
        \sum_{i=1}^n\xi_i
    \right\|_{q,\infty}
    &\le
    \left\|
        \sum_{i=1}^n\xi_i
    \right\|_{q,s}
    \\
    &\le
    C
    \bigg\{
        q\|\max_{1\le i\le n}|\xi_i|_s\|_q
        \\
    &\qquad\qquad
        +
        \sqrt{q(s-1)}
        \left\|
            \left(
                \sum_{i=1}^n
                \E[|\xi_i|_s^2\mid\mathcal F_{i-1}]
            \right)^{1/2}
        \right\|_q
    \bigg\}
    \\
    &\le
    C
    \bigg\{
        q\|\max_{1\le i\le n}|\xi_i|_\infty\|_q
        \\
    &\qquad\qquad
        +
        \sqrt{q\log(ed)}
        \left\|
            \left(
                \sum_{i=1}^n
                \E[|\xi_i|_\infty^2\mid\mathcal F_{i-1}]
            \right)^{1/2}
        \right\|_q
    \bigg\}.
\end{aligned}
\]
This is the desired \(r=\infty\) bound.

Finally, let \(1\le r<2\). Since
\[
    |x|_r\le d^{1/r-1/2}|x|_2,
    \qquad x\in\mathbb R^d,
\]
we have
\[
    \left\|
        \sum_{i=1}^n\xi_i
    \right\|_{q,r}
    \le
    d^{1/r-1/2}
    \left\|
        \sum_{i=1}^n\xi_i
    \right\|_{q,2}.
\]
Applying the already proved \(r=2\) case gives
\[
\begin{aligned}
    \left\|
        \sum_{i=1}^n\xi_i
    \right\|_{q,r}
    \le\;&
    C d^{1/r-1/2}
    \bigg\{
        q\|\max_{1\le i\le n}|\xi_i|_2\|_q
        \\
    &\qquad\qquad
        +
        \sqrt q
        \left\|
            \left(
                \sum_{i=1}^n
                \E[|\xi_i|_2^2\mid\mathcal F_{i-1}]
            \right)^{1/2}
        \right\|_q
    \bigg\}.
\end{aligned}
\]
This is the asserted bound for \(1\le r<2\). The proof is complete.
\end{proof}

\section{Proofs of the applications}\label{sec:proofapp}

\begin{proof}[Proof of \cref{thm:tail-cor22}]
Fix $\gamma>0$. Let \(\widetilde S:=\widetilde S^{(0)}\) and
\(\tau:=\tau_0\) be the stopped Gaussian augmentation and stopping time
constructed in the proof of \cref{thm:4} with parameter $\gamma$.
Its total predictable quadratic variation is
$\bar\Sigma+\gamma^2I_d$. Conditional on $\mathcal F_n$, its terminal
increment and its Gaussian replacement both have law $N(0,H_{\tau_0})$,
so their Lindeberg swapping error is zero. Moreover, stopping only
decreases the corresponding $A_{p,r}$ term. Therefore, by the proof of
\cref{cor:1},
\[
\mcl W_{p,r}\!\left(
\mathcal L(\widetilde S),N(0,\bar\Sigma+\gamma^2I_d)
\right)
\le
C(pA_{p,r})^{1/3}\|Z\|_{p,r}^{2/3}.
\]
Hence, possibly enlarging the probability space, there exists
$\widetilde T\sim N(0,\bar\Sigma+\gamma^2I_d)$ such that
\[
\mathbb P\left(|\widetilde S-\widetilde T|_r>\eta\right)
\le
\left[
\frac{
C(pA_{p,r})^{1/3}\|Z\|_{p,r}^{2/3}
}{\eta}
\right]^p .
\]
Using a regular conditional distribution for the Gaussian decomposition
\(N(0,\bar\Sigma+\gamma^2I_d)=N(0,\bar\Sigma)*N(0,\gamma^2I_d)\), we may
extend this coupling further so that
\[
    \widetilde T=T+\gamma Z_1,
    \qquad
    T\sim N(0,\bar\Sigma),
    \qquad
    Z_1\sim N(0,I_d),
\]
with \(T\) and \(Z_1\) independent, without changing the joint law of
\((\widetilde S,\widetilde T)\).

By the same arguments comparing the stopped problem with the original one (cf. \cite[Lemma~B.8 and the proof of Proposition~2.1]
{cattaneo2025yurinskii}), we have the following comparison. 
Note that
\(\{\tau<n\}\subseteq\{\|\Delta\|_2>\gamma^2\}\), whereas on
\(\{\tau=n\}\),
\[
S_n-\widetilde S=-(\gamma^2I_d-\Delta)^{1/2}Z_0.
\]
On \(\{\tau=n\}\), the conditional Gaussian bound
\[
\mathbb E\!\left[
|(\gamma^2I_d-\Delta)^{1/2}Z_0|_r
\,\middle|\,\mathcal F_n
\right]
\le
\|Z\|_{1,r}\|\gamma^2I_d-\Delta\|_2^{1/2}
\]
holds. Consequently, Markov's inequality and Cauchy--Schwarz give
\[
\begin{aligned}
\mathbb P\left(|S_n-\widetilde S|_r>\eta\right)
&\le
\mathbb P(\tau<n)
+
\frac1\eta
\mathbb E\!\left[
\mathbf 1_{\{\tau=n\}}
|H_\tau^{1/2}Z_0|_r
\right] \\
&\le
\frac{\mathbb E\|\Delta\|_2}{\gamma^2}
+
\frac{\gamma\|Z\|_{1,r}}{\eta}
+
\frac{\|Z\|_{1,r}\{\mathbb E\|\Delta\|_2\}^{1/2}}{\eta}.
\end{aligned}
\]
Moreover,
\[
\mathbb P\left(|\widetilde T-T|_r>\eta\right)
\le
\frac{\gamma\|Z\|_{1,r}}{\eta}.
\]
Combining these two bounds yields
\[
\mathbb P\left(|S_n-\widetilde S|_r>\eta\right)
+
\mathbb P\left(|\widetilde T-T|_r>\eta\right)
\le
\frac{\mathbb E\|\Delta\|_2}{\gamma^2}
+
\frac{2\gamma\|Z\|_{1,r}}{\eta}
+
\frac{\|Z\|_{1,r}\{\mathbb E\|\Delta\|_2\}^{1/2}}{\eta}.
\]
Since
\[
        |S_n-T|_r
        \le
        |S_n-\widetilde S|_r
        +
        |\widetilde S-\widetilde T|_r
        +
        |\widetilde T-T|_r ,
\]
we have
\[
\begin{aligned}
\mathbb P\left(|S_n-T|_r>3\eta\right)
&\le
\left[
\frac{
C(pA_{p,r})^{1/3}\|Z\|_{p,r}^{2/3}
}{\eta}
\right]^p                                                     \\
&\quad+
\frac{\mathbb E\|\Delta\|_2}{\gamma^2}
+
\frac{2\gamma\|Z\|_{1,r}}{\eta}
+
\frac{\|Z\|_{1,r}\{\mathbb E\|\Delta\|_2\}^{1/2}}{\eta}.
\end{aligned}
\]
The proof is complete by optimizing over $\gamma$.
\end{proof}

\begin{proof}[Proof of \cref{thm:MDmartingale}]
We first derive the Wasserstein estimate needed below. For
\(n\ge\max\{1,2\gamma^2\}\), put \(m_k=n-k+1\). The assumptions give
\[
 \Sigma\succeq(1-\gamma^2/n)I_d\succeq\tfrac12 I_d,
 \qquad
 \Pi_k\succeq\frac{\alpha m_k}{2n}I_d\succ0.
\]
Apply \cref{thm:2} with
\[
        M=0,
        \qquad
        \Delta_0=\gamma^2I_d/n,
        \qquad
        H=(1+\gamma^2/n)I_d-\Sigma.
\]
Since \(0\preceq H\preceq2\gamma^2I_d/n\), the two Gaussian-augmentation
terms in \cref{thm:2} are bounded by \(C\gamma\sqrt p/\sqrt n\).
As \(|\xi_k|\le K/\sqrt n\), \cref{thm:2} and the Gaussian moment bound give
\[
\begin{aligned}
\mcl W_{p,2}\bigl(\mcl L(S_n),N(0,I_d)\bigr)
&\le
\frac{Cp}{\sqrt n}\sum_{k=1}^n\frac1{m_k}
+\frac{C\gamma\sqrt p}{\sqrt n} \\
&\le Cp\,\delta,
\qquad p\ge1,
\end{aligned}
\]
where \(C\) depends only on \(K,\alpha,\gamma\), and \(d\). For
\(1\le n<2\gamma^2\), the bound
\(\mcl W_{p,2}\le K\sqrt n+\|Z\|_{p,2}\) gives the same estimate after
increasing \(C\). For all sufficiently large \(n\), the desired result
\cref{eq:thmMD} follows from \cite[Theorem~4.2]{fang2023p}. The finitely
many remaining \(n\) are covered by increasing \(C\), since
\(\mathbb P(|Z|>x)\) is bounded away from zero on their prescribed
ranges \(0\le x\le\delta^{-1/3}\).
\end{proof}

To prepare for the proof of \cref{prop:SGD}, we need the following lemma obtained by a standard computation.

\begin{lemma}\label{lem:sgd-auxiliary}
Under the assumptions of \cref{prop:SGD}, put
\[
 A_n=\sum_{i=0}^{n-1}\prod_{j=1}^{i}(I_d-\eta_jH),
\]
and, for $1\leq k\leq n-1$, define
\begin{equation}\label{eq:sgd-weight}
 B_k^n=\eta_k\sum_{i=k}^{n-1}
 \prod_{\ell=k+1}^{i}(I_d-\eta_\ell H).
\end{equation}
Empty products equal $I_d$. There is a constant $C$, with the same allowed
dependence as in \cref{prop:SGD}, such that
\begin{gather}
 \sup_{n\geq1}\|A_n\|_2\leq C,
 \qquad
 \|\Delta_k\|_{2p}\leq C(k+1)^{-\alpha/2},\quad k\geq0,
 \label{eq:sgd-auxiliary-stability}\\
 \max_{1\leq k<n}\|B_k^n\|_2\leq C,
 \qquad
 \sum_{k=1}^{n-1}\|B_k^n-H^{-1}\|_2\leq Cn^\alpha,
 \quad n\geq2.
 \label{eq:sgd-weight-defect}
\end{gather}
\end{lemma}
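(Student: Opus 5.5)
We establish the four estimates in the order (i) $\sup_n\|A_n\|_2$, (ii) $\|B_k^n\|_2$, (iii) the summed defect $\sum_k\|B_k^n-H^{-1}\|_2$, and (iv) the moment bound on $\Delta_k$. All of them rest on one deterministic product estimate. Since $\mu I_d\preceq H\preceq LI_d$ and $\eta_jL\le\eta_0L\le 1/2$, each factor satisfies $0\preceq I_d-\eta_jH\preceq(1-\mu\eta_j)I_d$. Because the factors are simultaneously diagonalizable, this gives
\[
\Bigl\|\prod_{\ell=k+1}^{i}(I_d-\eta_\ell H)\Bigr\|_2\le\exp\Bigl(-\mu\sum_{\ell=k+1}^{i}\eta_\ell\Bigr),\qquad \sum_{\ell=k+1}^{i}\eta_\ell\ge c\,\eta_0\frac{(i+1)^{1-\alpha}-(k+1)^{1-\alpha}}{1-\alpha}.
\]

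\emph{The bounds on $A_n$ and $B_k^n$.} For $A_n$, summing $\exp(-c'(i+1)^{1-\alpha})$ over $i$ gives a convergent series, so $\sup_n\|A_n\|_2\le C$. For $B_k^n$, compare the sum with the integral $\int_k^\infty\eta_k\exp(-\mu\eta_k(s-k)/2)\,ds$, using that $\eta_\ell\ge\eta_i$ for $\ell\le i$ only in a window of length comparable to $1/\eta_k$. This yields $\|B_k^n\|_2\le C$.

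\emph{The summed defect.} This is the main step. Use the telescoping identity
\[
\eta_k H\sum_{i=k}^{n-1}P_{k+1}^{i}=\sum_{i=k}^{n-1}\frac{\eta_k}{\eta_{i+1}}\bigl(P_{k+1}^{i}-P_{k+1}^{i+1}\bigr),
\]
where $P_{k+1}^{i}=\prod_{\ell=k+1}^{i}(I_d-\eta_\ell H)$. Summation by parts then writes $HB_k^n-I_d$ as the sum of two terms. The first is a boundary term $-\frac{\eta_k}{\eta_n}P_{k+1}^{n}$. The second is $\sum_i(\eta_k/\eta_{i+1}-\eta_k/\eta_i)P_{k+1}^{i}$, whose coefficients satisfy $|\eta_{i+1}^{-1}-\eta_i^{-1}|\le Ci^{\alpha-1}/\eta_0$. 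Summing the second term against the exponential decay gives a contribution $C\eta_k k^{\alpha-1}\cdot k^{\alpha}\le Ck^{\alpha-1}$ per $k$, hence $\le Cn^{\alpha}$ after summing over $k$. For the boundary term, use
\[
\sum_{k<n}\frac{\eta_k}{\eta_n}\exp\Bigl(-\mu\sum_{k+1}^{n}\eta_\ell\Bigr)\le\frac{C}{\eta_n}\le Cn^{\alpha}.
\]
Multiplying by $H^{-1}$, with $\|H^{-1}\|_2\le1/\mu$, gives the second bound in \eqref{eq:sgd-weight-defect}. I expect this to be the delicate step: one has to ensure the discrete sums track the integrals uniformly in $k$ near $n$. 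Splitting into $k\le n/2$ and $k>n/2$ should handle it.

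\emph{Moment bound on $\Delta_k$.} Write $\Delta_k=\Delta_{k-1}-\eta_k(\nabla f(\theta_{k-1})+\varepsilon_k)$ and $\nabla f(\theta_{k-1})=\bar H_{k-1}\Delta_{k-1}$, where $\bar H_{k-1}=\int_0^1\nabla^2f(\theta^*+s\Delta_{k-1})\,ds$ has spectrum in $[\mu,L]$. Then
\[
|\Delta_k|_2^2\le(1-\mu\eta_k)^2|\Delta_{k-1}|_2^2-2\eta_k\langle(I_d-\eta_k\bar H_{k-1})\Delta_{k-1},\varepsilon_k\rangle+\eta_k^2|\varepsilon_k|_2^2.
\]
Raise this to the power $p$ and take expectations, expanding $(a+b)^p$ with the martingale cross term vanishing at first order. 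The standard inequality $\E(a+b)^p\le\E a^p+p\E a^{p-1}b+C_p\E(a^{p-2}b^2+|b|^p)$, with $\E[b\mid\mathcal F_{k-1}]$ controlled, together with Hölder and \eqref{eq:sgd-moment} (valid since $3p\ge2p$), gives the recursion
\[
u_k\le(1-\mu\eta_k)u_{k-1}+C\eta_k^2u_{k-1}^{1-1/p}+C\eta_k^{p}\text{-type terms},\qquad u_k=\E|\Delta_k|_2^{2p}.
\]
A Young inequality reduces this to $u_k\le(1-\mu\eta_k/2)u_{k-1}+C\eta_k^{p+1}$. Induction with $\eta_k=\eta_0k^{-\alpha}$, using that $\eta_{k-1}^p/\eta_k^p=1+O(1/k)$ is dominated by $\mu\eta_k/4$ for large $k$ since $\alpha<1$, gives $u_k\le C\eta_k^{p}$, i.e. $\|\Delta_k\|_{2p}\le C(k+1)^{-\alpha/2}$. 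The initial finitely many $k$ are absorbed using $\|\Delta_0\|_{2p}<\infty$.
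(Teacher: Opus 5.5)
The gap is in the moment bound for $\Delta_k$ when $1<p<2$, a range the lemma must cover because $p\ge1$ is arbitrary. Your other estimates are sound and parallel the paper. Your Abel summation $HB_k^n-I_d=\sum_{i=k}^{n-1}(\eta_k/\eta_{i+1}-\eta_k/\eta_i)P_{k+1}^{i}-(\eta_k/\eta_n)P_{k+1}^{n}$ does the job of the paper's identity $B_k^n-H^{-1}=-H^{-1}P_{k,n}+\sum_{i=k}^{n-1}(\eta_k-\eta_{i+1})P_{k,i}$. Your bound $\sum_{k<n}\eta_k\|P_{k+1}^n\|_2\le C$, a Riemann sum for $\int_0^\infty e^{-\mu s}\,ds$, replaces the paper's geometric series.

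\textbf{Why the moment recursion fails for $1<p<2$.} Your scheme is fine for $p=1$ and for $p\ge2$. You first square and then raise $a+b$ to the power $p$, where
\[
a=(1-\mu\eta_k)^2|\Delta_{k-1}|_2^2,\qquad b=b_1+b_2,\qquad b_1=-2\eta_k\langle(I_d-\eta_k\bar H_{k-1})\Delta_{k-1},\varepsilon_k\rangle,\qquad b_2=\eta_k^2|\varepsilon_k|_2^2 .
\]
For $p<2$ the remainder $a^{p-2}b^2$ carries a negative power of $|\Delta_{k-1}|_2$. So its piece $a^{p-2}b_2^2$ is not controlled by $u_{k-1}$ at all. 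If you fall back on $(a+b)^p\le a^p+pa^{p-1}b+C_p|b|^p$, the martingale term is raised to the $p$-th power:
\[
\E|b_1|^p\le C\eta_k^p\,\E\bigl(|\Delta_{k-1}|_2^p|\varepsilon_k|_2^p\bigr)\le C\eta_k^p\,u_{k-1}^{1/2}.
\]
At the target scale $u_{k-1}\asymp\eta_k^p$ this is of order $\eta_k^{3p/2}$. That exceeds the admissible $\eta_k^{p+1}$ by the unbounded factor $\eta_k^{p/2-1}$. Young's inequality only gives $\epsilon\eta_k u_{k-1}+C\eta_k^{2p-1}$. The recursion then reads $u_k\le(1-c\eta_k)u_{k-1}+C\eta_k^{2p-1}$, which yields only $\|\Delta_k\|_{2p,2}\lesssim k^{-\alpha(p-1)/p}$, strictly weaker than $k^{-\alpha/2}$. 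Running your argument with $p$ replaced by $2$ and then using Lyapunov's inequality does not rescue $1<p<4/3$. That route needs $\E|\varepsilon_k|_2^4$, which $\|\varepsilon_k\|_{3p,2}\le K_{3p}$ does not provide there.

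\textbf{The fix.} Expand $x\mapsto|x|_2^{q}$ with $q=2p\ge2$ directly on $\mathbb R^d$, as the paper does. Take the $\mathcal F_{k-1}$-measurable $x=(I_d-\eta_k\bar H_{k-1})\Delta_{k-1}$ and $y=-\eta_k\varepsilon_k$. Then
\[
|x+y|_2^q\le|x|_2^q+q|x|_2^{q-2}\langle x,y\rangle+C_q\bigl(|x|_2^{q-2}|y|_2^2+|y|_2^q\bigr).
\]
Now $\varepsilon_k$ enters only linearly, where the term vanishes in conditional expectation, or quadratically with the harmless weight $|x|_2^{q-2}$. The remainders are at most $C\eta_k^2u_{k-1}^{1-1/p}$ and $C\eta_k^{2p}$. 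Both reduce by Young to $\epsilon\eta_k u_{k-1}+C\eta_k^{p+1}$ for every $p\ge1$, and your induction then goes through unchanged.

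Alternatively, you can keep your scalar scheme. For $1\le p\le2$, use the sharper inequality
\[
(a+b)^p\le a^p+pa^{p-1}b+C_p\min\{a^{p-2}b^2,|b|^p\}\le a^p+pa^{p-1}b+C_p'\bigl(a^{p-2}b_1^2+|b_2|^p\bigr),
\]
together with $a^{p-2}b_1^2\le4\eta_k^2|\Delta_{k-1}|_2^{2p-2}|\varepsilon_k|_2^2$. This never raises $b_1$ to a power other than $2$.
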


\begin{proof}
Since $\eta_0L\leq1/2$,
\[
 \left\|\prod_{j=1}^{i}(I_d-\eta_jH)\right\|_2
 \leq\exp\left(-\mu\sum_{j=1}^{i}\eta_j\right)
 \leq C\exp\{-c(i+1)^{1-\alpha}\},\qquad i\geq0.
\]
Summing over $i$ proves the bound on $A_n$.

For the iterate moment, put $q=2p$ and $m_k=\mathbb E|\Delta_k|_2^q$.
By \cref{eq:sgd-regularity},
\[
 |u-\eta_k\nabla f(\theta^*+u)|_2
 \leq(1-\mu\eta_k)|u|_2.
\]
For $q>2$, the Taylor bound for $x\mapsto|x|_2^q$ is
\[
\begin{aligned}
 |x+y|_2^q
 &\leq |x|_2^q+q|x|_2^{q-2}\langle x,y\rangle\\
 &\quad+C_q\bigl(|x|_2^{q-2}|y|_2^2+|y|_2^q\bigr),
 \qquad x,y\in\mathbb R^d.
\end{aligned}
\]
Apply it with $x=\Delta_{k-1}-\eta_k\nabla f(\theta_{k-1})$ and
$y=-\eta_k\varepsilon_k$. Since $x$ is $\mathcal F_{k-1}$-measurable,
\[
 \mathbb E\bigl[|x|_2^{q-2}\langle x,y\rangle\mid\mathcal F_{k-1}\bigr]
 =|x|_2^{q-2}\langle x,\mathbb E(y\mid\mathcal F_{k-1})\rangle=0.
\]
Thus, using \cref{eq:sgd-moment},
\[
 m_k\leq(1-\mu\eta_k)^q m_{k-1}
 +C\eta_k^2\mathbb E\bigl(|\Delta_{k-1}|_2^{q-2}|\varepsilon_k|_2^2\bigr)
 +C\eta_k^q.
\]
H\"older's inequality with exponents $q/(q-2)$ and $q/2$ gives
\[
 \mathbb E\bigl(|\Delta_{k-1}|_2^{q-2}|\varepsilon_k|_2^2\bigr)
 \leq m_{k-1}^{1-2/q}\|\varepsilon_k\|_{q,2}^2
 \leq C m_{k-1}^{1-2/q}.
\]
Young's inequality with the same exponents then yields
\[
\begin{aligned}
 C\eta_k^2m_{k-1}^{1-2/q}
 &=C(\eta_km_{k-1})^{1-2/q}\eta_k^{1+2/q}\\
 &\leq\frac\mu2\eta_km_{k-1}+C\eta_k^{q/2+1}.
\end{aligned}
\]
For $q=2$, expansion of the square gives
\[
 m_k\leq(1-\mu\eta_k)^2m_{k-1}+K_{3p}^2\eta_k^2.
\]
Using $(1-\mu\eta_k)^q\leq1-\mu\eta_k$ and
$\eta_k^q\leq C\eta_k^{q/2+1}$, we obtain in both cases
\[
 m_k\leq(1-c\eta_k)m_{k-1}+C\eta_k^{q/2+1}
 \quad\Longrightarrow\quad
 m_k\leq C(k+1)^{-\alpha q/2}.
\]
A standard induction, using $k^{-1}=o(\eta_k)$, yields
\[
m_k \le C(k+1)^{-\alpha q/2}.
\]
This proves \cref{eq:sgd-auxiliary-stability}.

For the weight estimates, put
\[
 P_{k,i}=\prod_{\ell=k+1}^{i}(I_d-\eta_\ell H),
 \qquad k\leq i\leq n.
\]
Since $P_{k,k}=I_d$ and all factors commute with $H$,
\[
 \eta_{i+1}P_{k,i}
 =H^{-1}(P_{k,i}-P_{k,i+1}).
\]
Summing this identity gives
\[
 \sum_{i=k}^{n-1}\eta_{i+1}P_{k,i}
 =H^{-1}\sum_{i=k}^{n-1}(P_{k,i}-P_{k,i+1})
 =H^{-1}(I_d-P_{k,n}).
\]
Thus, by \cref{eq:sgd-weight},
\[
 B_k^n-H^{-1}
 =-H^{-1}P_{k,n}
 +\sum_{i=k}^{n-1}(\eta_k-\eta_{i+1})P_{k,i}.
\]

To estimate the product, use $\eta_\ell L\leq1/2$ to obtain
\[
\begin{aligned}
 \|P_{k,i}\|_2
 &\leq\prod_{\ell=k+1}^{i}(1-\mu\eta_\ell)
 \leq\exp\left(
 -\mu\eta_0\sum_{\ell=k+1}^{i}\ell^{-\alpha}
 \right)\\
 &\leq e^{-\mu\eta_0(i-k)i^{-\alpha}}
 \leq
 \begin{cases}
  e^{-c(i-k)k^{-\alpha}},
  &k\leq i\leq\min(2k,n),\\
  e^{-ci^{1-\alpha}},
  &2k<i\leq n.
 \end{cases}
\end{aligned}
\]
The two cases use $i\leq2k$ and $i-k\geq i/2$, respectively. Also,
\[
\begin{gathered}
 0\leq\eta_k-\eta_{i+1}
 =\alpha\eta_0\int_k^{i+1}x^{-\alpha-1}\,dx
 \leq C(i-k+1)k^{-\alpha-1},\\
 \eta_k-\eta_{i+1}\leq\eta_k=\eta_0k^{-\alpha}.
\end{gathered}
\]

For the two sums below, we use
\[
\begin{aligned}
 \sum_{s=0}^{k}(s+1)e^{-csk^{-\alpha}}
 &\leq\sum_{s=0}^{\infty}(s+1)e^{-csk^{-\alpha}}
 =\frac1{(1-e^{-ck^{-\alpha}})^2}
 \leq Ck^{2\alpha},\\
 \sum_{i>2k}e^{-ci^{1-\alpha}}
 &\leq\sum_{i=1}^{\infty}e^{-ci^{1-\alpha}}<\infty.
\end{aligned}
\]
Here $1-e^{-ck^{-\alpha}}\geq(1-e^{-c})k^{-\alpha}$,
and $1-\alpha>0$.
Splitting at $i=2k$ and writing $s=i-k$ in the first part therefore gives
\[
\begin{aligned}
 \sum_{i=k}^{n-1}(\eta_k-\eta_{i+1})\|P_{k,i}\|_2
 &\leq Ck^{-\alpha-1}
 \sum_{s=0}^{k}(s+1)e^{-csk^{-\alpha}}\\
 &\quad+Ck^{-\alpha}\sum_{i>2k}e^{-ci^{1-\alpha}}\\
 &\leq Ck^{\alpha-1}+Ck^{-\alpha}
 \leq Ck^{\alpha-1},
\end{aligned}
\]
since $\alpha>1/2$.
Taking $i=n$ in the product estimate and using the weight identity,
we obtain
\begin{equation}\label{eq:sgd-W-pointwise}
 \|B_k^n-H^{-1}\|_2
 \leq C\{k^{\alpha-1}+e^{-c(n-k)n^{-\alpha}}\}.
\end{equation}

Both terms in braces are at most one, so $\|B_k^n\|_2$
is uniformly bounded.
Finally, summing \cref{eq:sgd-W-pointwise} and setting $j=n-k$ gives
\[
\begin{aligned}
 \sum_{k=1}^{n-1}\|B_k^n-H^{-1}\|_2
 &\leq C\sum_{k=1}^{n-1}k^{\alpha-1}
      +C\sum_{j=1}^{n-1}e^{-cjn^{-\alpha}}\\
 &\leq C\left(1+\int_1^n x^{\alpha-1}\,dx\right)
      +\frac{C}{e^{cn^{-\alpha}}-1}\\
 &\leq Cn^\alpha,
\end{aligned}
\]
where we used the geometric series and $e^x-1\geq x$ for $x\geq0$.
This proves \cref{eq:sgd-weight-defect}.
\end{proof}

\begin{proof}[Proof of \cref{prop:SGD}]
Put $T_n=-n^{-1/2}\sum_{k=1}^{n-1}B_k^n\varepsilon_k$.
Iterating the recursion and averaging yields
\begin{equation}\label{eq:sgd-decomposition}
 \sqrt n(\bar\theta_n-\theta^*)-T_n
 =\frac{A_n\Delta_0}{\sqrt n}
 -\frac1{\sqrt n}\sum_{k=1}^{n-1}
 B_k^n\{\nabla f(\theta_{k-1})-H\Delta_{k-1}\}.
\end{equation}
Since $\nabla f(\theta^*)=0$, \cref{eq:sgd-regularity} gives
\[
\begin{aligned}
 \nabla f(\theta^*+u)-Hu
 &=\int_0^1\{\nabla^2f(\theta^*+su)-H\}u\,ds,\\
 |\nabla f(\theta^*+u)-Hu|_2
 &\leq\int_0^1L_Hs|u|_2^2\,ds=\frac{L_H}{2}|u|_2^2.
\end{aligned}
\]
Thus \cref{lem:sgd-auxiliary} gives
\begin{equation}\label{eq:sgd-remainder}
\begin{aligned}
 \|\sqrt n(\bar\theta_n-\theta^*)-T_n\|_{p,2}
 &\leq\frac{\|A_n\|_2\|\Delta_0\|_{p,2}}{\sqrt n}
 +\frac{L_H}{2\sqrt n}\sum_{k=1}^{n-1}
 \|B_k^n\|_2\|\Delta_{k-1}\|_{2p,2}^2\\
 &\leq C\{n^{-1/2}+n^{1/2-\alpha}\}.
\end{aligned}
\end{equation}

For the martingale term $T_n$, define the deterministic covariances
\begin{equation}\label{eq:sgd-finite-covariance}
 \Pi_{k,n}^B=\frac1n\sum_{j=k}^{n-1}B_j^nV(B_j^n)^\top,
 \qquad
 \Gamma_n^{\mathrm{PR}}=\Pi_{1,n}^B=\operatorname{Cov}(T_n),
\end{equation}
for $1\leq k<n$, with $\Pi_{n,n}^B=0$.
Condition \cref{eq:sgd-fixed-covariance} makes these the predictable
covariance tails. Since $B_{n-1}^n=\eta_{n-1}I_d$ and
$\|V\|_2\leq K_{3p}^2$,
\[
 \Pi_{k,n}^B\succeq\Pi_{n-1,n}^B
 =\frac{\eta_{n-1}^2}{n}V\succ0,
 \qquad
 \Gamma_n^{\mathrm{PR}}\preceq CI_d.
\]
Also,
\[
 \left\|\|\varepsilon_k\mid\mathcal F_{k-1}\|_{3p,2}^{3}\right\|_p
 =\|\varepsilon_k\|_{3p,2}^{3}\leq K_{3p}^{3}.
\]
Applying \cref{thm:3} to $-n^{-1/2}B_k^n\varepsilon_k$ with $M=0$,
\[
\begin{aligned}
 &\mcl W_{p,2}\bigl(\mcl L(T_n),\N(0,\Gamma_n^{\mathrm{PR}})\bigr)\\
 &\quad\leq\frac{Cp}{n^{3/2}}
 \sum_{k=1}^{n-1}\|B_k^n\|_2
 \|(\Pi_{k,n}^B)^{-1/2}B_k^n\|_2^2
 \|\varepsilon_k\|_{3p,2}^3.
\end{aligned}
\]
Since $I_d\preceq\|V^{-1}\|_2V$, the operator norm--trace comparison gives
\[
\begin{aligned}
 \frac1n\|(\Pi_{k,n}^B)^{-1/2}B_k^n\|_2^2
 &\leq\frac{\|V^{-1}\|_2}{n}
 \operatorname{tr}\bigl[(\Pi_{k,n}^B)^{-1}B_k^nV(B_k^n)^\top\bigr]\\
 &=\|V^{-1}\|_2\operatorname{tr}\bigl[
 (\Pi_{k,n}^B)^{-1}(\Pi_{k,n}^B-\Pi_{k+1,n}^B)\bigr]
\end{aligned}
\]
Hence
\[
 \mcl W_{p,2}\bigl(\mcl L(T_n),\N(0,\Gamma_n^{\mathrm{PR}})\bigr)
 \leq\frac C{\sqrt n}\sum_{k=1}^{n-1}
 \operatorname{tr}\bigl[(\Pi_{k,n}^B)^{-1}
 (\Pi_{k,n}^B-\Pi_{k+1,n}^B)\bigr].
\]
Concavity of $\log\det$ gives
\[
 \operatorname{tr}\{A^{-1}(A-B)\}
 \leq\log\det A-\log\det B,
 \qquad A\succeq B\succ0.
\]
The last summand below equals $d$; the others telescope:
\[
\begin{aligned}
 \sum_{k=1}^{n-1}\operatorname{tr}\bigl[
 (\Pi_{k,n}^B)^{-1}(\Pi_{k,n}^B-\Pi_{k+1,n}^B)\bigr]
 &\leq d+\sum_{k=1}^{n-2}\log
 \frac{\det\Pi_{k,n}^B}{\det\Pi_{k+1,n}^B}\\
 &=d+\log\frac{\det\Gamma_n^{\mathrm{PR}}}
 {\det\{\eta_{n-1}^2V/n\}}\\
 &\leq C+d\log n+2\alpha d\log(n-1)\leq C\log(en),
\end{aligned}
\]
where $\Gamma_n^{\mathrm{PR}}\preceq CI_d$ and
$\det\{\eta_{n-1}^2V/n\}=\eta_0^{2d}n^{-d}(n-1)^{-2\alpha d}\det V$.
Consequently,
\begin{equation}\label{eq:sgd-weighted-martingale}
 \mcl W_{p,2}\bigl(\mcl L(T_n),\N(0,\Gamma_n^{\mathrm{PR}})\bigr)
 \leq C\frac{\log(en)}{\sqrt n}.
\end{equation}

It remains to compare the two Gaussian covariances. By
\cref{eq:sgd-weight-defect},
\begin{equation}\label{eq:sgd-covariance-comparison}
\begin{aligned}
 \|\Gamma_n^{\mathrm{PR}}-H^{-1}VH^{-1}\|_2
 &\leq\frac{\|H^{-1}VH^{-1}\|_2}{n}\\
 &\quad+\frac{\|V\|_2}{n}\sum_{k=1}^{n-1}
 (\|B_k^n\|_2+\|H^{-1}\|_2)\|B_k^n-H^{-1}\|_2\\
 &\leq Cn^{\alpha-1}.
\end{aligned}
\end{equation}
For positive definite $A,B$, the Sylvester equation implies
\[
 \|A^{1/2}-B^{1/2}\|_2
 \leq\frac{\|A-B\|_2}
 {\sqrt{\lambda_{\min}(A)}+\sqrt{\lambda_{\min}(B)}}.
\]
Since $H^{-1}VH^{-1}\succeq
 (L^2\|V^{-1}\|_2)^{-1}I_d$, coupling by the same standard Gaussian gives
\[
 \mcl W_{p,2}\bigl(\N(0,\Gamma_n^{\mathrm{PR}}),\N(0,H^{-1}VH^{-1})\bigr)
 \leq C\|\Gamma_n^{\mathrm{PR}}-H^{-1}VH^{-1}\|_2
 \leq Cn^{\alpha-1}.
\]
Combining this with \cref{eq:sgd-remainder,eq:sgd-weighted-martingale}
and the triangle inequality proves \cref{eq:sgd-limit-bound}.
\end{proof}


\bibliographystyle{apalike} 
\bibliography{reference}

@inproceedings{anastasiou2019normal,
  title={Normal approximation for stochastic gradient descent via non-asymptotic rates of martingale {CLT}},
  author={Anastasiou, Andreas and Balasubramanian, Krishnakumar and Erdogdu, Murat A.},
  booktitle={Proceedings of the Thirty-Second Conference on Learning Theory},
  pages={115--137},
  year={2019},
  editor={Beygelzimer, Alina and Hsu, Daniel},
  volume={99},
  series={Proceedings of Machine Learning Research},
  publisher={PMLR}
}

@article{bonis2020stein,
  title={Stein’s method for normal approximation in {Wasserstein} distances with application to the multivariate central limit theorem},
  author={Bonis, Thomas},
  journal={Probability Theory and Related Fields},
  volume={178},
  number={3},
  pages={827--860},
  year={2020},
  publisher={Springer}
}

@book{boucheron2013,
  title={Concentration Inequalities: A Nonasymptotic Theory of Independence},
  author={Boucheron, St\'ephane and Massart, Pascal and Lugosi G\'abor},
  year={2013},
  publisher={Oxford University Press}
}

@article{cattaneo2025yurinskii,
  title={Yurinskii’s coupling for martingales},
  author={Cattaneo, Matias D and Masini, Ricardo P and Underwood, William G},
  journal={The Annals of Statistics},
  volume={53},
  number={5},
  pages={2179--2203},
  year={2025},
  publisher={Institute of Mathematical Statistics}
}

@article{Cramer1938,
  title={Sur un nouveau th{\'e}oreme-limite de la th{\'e}orie des probabilit{\'e}s},
  author={Cram{\'e}r, Harald},
  journal={Actual. Sci. Ind.},
  volume={736},
  pages={5--23},
  year={1938}
}

@article{fan2024cramer,
  title={Cram{\'e}r’s moderate deviations for martingales with applications},
  author={Fan, Xiequan and Shao, Qi-Man},
  journal={Annales de l'Institut Henri Poincare (B) Probabilites et statistiques},
  volume={60},
  number={3},
  pages={2046--2074},
  year={2024},
  organization={Institut Henri Poincar{\'e}}
}

@article{fang2024sharp,
  title={Sharp high-dimensional central limit theorems for log-concave distributions},
  author={Fang, Xiao and Koike, Yuta},
  journal={Annales de l'Institut Henri Poincare (B) Probabilites et statistiques},
  volume={60},
  number={3},
  pages={2129--2156},
  year={2024},
  organization={Institut Henri Poincar{\'e}}
}

@article{fang2023p,
  title={From $p$-{Wasserstein} bounds to moderate deviations},
  author={Fang, Xiao and Koike, Yuta},
  journal={Electronic Journal of Probability},
  volume={28},
  pages={1--52},
  year={2023},
  publisher={The Institute of Mathematical Statistics and the Bernoulli Society}
}

@article{kong2026finite,
  title={Finite-Sample {Wasserstein} Error Bounds and Concentration Inequalities for Nonlinear Stochastic Approximation},
  author={Kong, Seo Taek and Srikant, R},
  journal={arXiv preprint arXiv:2602.02445},
  year={2026}
}

@article{koike2026note,
  title={Connections between the {F}{\"o}llmer process and the denoising diffusion probabilistic model},
  author={Koike, Yuta},
  journal={arXiv preprint arXiv:2605.18040},
  year={2026}
}

@article{LeNoPe15,
  title={Stein’s method, logarithmic {S}obolev and transport inequalities},
  author={Ledoux, Michel and Nourdin, Ivan and Peccati, Giovanni},
  journal={Geometric and Functional Analysis},
  volume={25},
  number={1},
  pages={256--306},
  year={2015},
  publisher={Springer}
}

@article{otto2000generalization,
  title={Generalization of an inequality by {Talagrand} and links with the logarithmic {Sobolev} inequality},
  author={Otto, Felix and Villani, C{\'e}dric},
  journal={Journal of Functional Analysis},
  volume={173},
  number={2},
  pages={361--400},
  year={2000},
  publisher={Elsevier}
}

@article{pinelis1994optimum,
  title={Optimum bounds for the distributions of martingales in {Banach} spaces},
  author={Pinelis, Iosif},
  journal={The Annals of Probability},
  pages={1679--1706},
  year={1994},
  publisher={JSTOR}
}

@article{rollin2018quantitative,
  title={On quantitative bounds in the mean martingale central limit theorem},
  author={R{\"o}llin, Adrian},
  journal={Statistics \& Probability Letters},
  volume={138},
  pages={171--176},
  year={2018},
  publisher={Elsevier}
}

@article{shao2022berry,
  title={Berry--Esseen bounds for multivariate nonlinear statistics with applications to {M}-estimators and stochastic gradient descent algorithms},
  author={Shao, Qi-Man and Zhang, Zhuo-Song},
  journal={Bernoulli},
  volume={28},
  number={3},
  pages={1548--1576},
  year={2022},
  doi={10.3150/21-BEJ1336}
}

@inproceedings{sheshukova2026gaussian,
  title={Gaussian approximation and multiplier bootstrap for stochastic gradient descent},
  author={Sheshukova, Marina and Samsonov, Sergey and Belomestny, Denis and Moulines, Eric and Shao, Qi-Man and Zhang, Zhuo-Song and Naumov, Alexey},
  booktitle={Proceedings of the 29th International Conference on Artificial Intelligence and Statistics},
  pages={1378--1386},
  year={2026},
  volume={300},
  series={Proceedings of Machine Learning Research},
  publisher={PMLR}
}

@article{srikant2025rates,
  title={Rates of convergence in the central limit theorem for {Markov} chains, with an application to {TD} learning},
  author={Srikant, R.},
  journal={Mathematics of Operations Research},
  pages={1--18},
  year={2025},
  doi={10.1287/moor.2024.0444}
}

@article{wu2025uncertainty,
  title={Uncertainty quantification for {Markov} chain induced martingales with application to temporal difference learning},
  author={Wu, Weichen and Wei, Yuting and Rinaldo, Alessandro},
  journal={arXiv preprint arXiv:2502.13822v3},
  year={2025}
}

@book{hall1980martingale,
  author    = {Hall, Peter and Heyde, Christopher C.},
  title     = {Martingale Limit Theory and Its Application},
  publisher = {Academic Press},
  address   = {New York},
  year      = {1980}
}

@article{li2018applications,
  author  = {Li, Danning and Xue, Lingzhou and Zou, Hui},
  title   = {Applications of {Peter Hall}'s Martingale Limit Theory to Estimating and Testing High Dimensional Covariance Matrices},
  journal = {Statistica Sinica},
  volume  = {28},
  number  = {4},
  pages   = {2657--2670},
  year    = {2018},
  doi     = {10.5705/ss.202017.0060}
}

@article{polyak1992acceleration,
  author  = {Polyak, Boris T. and Juditsky, Anatoli B.},
  title   = {Acceleration of Stochastic Approximation by Averaging},
  journal = {SIAM Journal on Control and Optimization},
  volume  = {30},
  number  = {4},
  pages   = {838--855},
  year    = {1992},
  doi     = {10.1137/0330046}
}

@misc{borkar2021ode,
  author        = {Borkar, Vivek S. and Chen, Shuhang and Devraj, Adithya and Kontoyiannis, Ioannis and Meyn, Sean P.},
  title         = {The {ODE} Method for Asymptotic Statistics in Stochastic Approximation and Reinforcement Learning},
  journal={arXiv preprint arXiv:2110.14427},
  year          = {2021}
}

@inproceedings{hu2024central,
  author    = {Hu, Jie and Doshi, Vishwaraj and Eun, Do Young},
  title     = {Central Limit Theorem for Two-Timescale Stochastic Approximation with {Markovian} Noise: Theory and Applications},
  booktitle = {Proceedings of the 27th International Conference on Artificial Intelligence and Statistics},
  series    = {Proceedings of Machine Learning Research},
  volume    = {238},
  publisher = {PMLR},
  year      = {2024}
}

@article{bolthausen1982exact,
  author  = {Bolthausen, Erwin},
  title   = {Exact Convergence Rates in Some Martingale Central Limit Theorems},
  journal = {The Annals of Probability},
  volume  = {10},
  number  = {3},
  pages   = {672--688},
  year    = {1982},
  doi     = {10.1214/aop/1176993776}
}

@article{haeusler1988nonuniform,
  author  = {Haeusler, Erich and Joos, Konrad},
  title   = {A Nonuniform Bound on the Rate of Convergence in the Martingale Central Limit Theorem},
  journal = {The Annals of Probability},
  volume  = {16},
  number  = {4},
  pages   = {1699--1720},
  year    = {1988},
  doi     = {10.1214/aop/1176991592}
}

@article{mourrat2013rate,
  author  = {Mourrat, Jean-Christophe},
  title   = {On the Rate of Convergence in the Martingale Central Limit Theorem},
  journal = {Bernoulli},
  volume  = {19},
  number  = {2},
  pages   = {633--645},
  year    = {2013},
  doi     = {10.3150/12-BEJ417}
}

@article{fan2020wasserstein,
  author  = {Fan, Xiequan and Ma, Xiaohui},
  title   = {On the {Wasserstein} Distance for a Martingale Central Limit Theorem},
  journal = {Statistics \& Probability Letters},
  volume  = {167},
  pages   = {108892},
  year    = {2020},
  doi     = {10.1016/j.spl.2020.108892}
}

@article{dedecker2022rates,
  author  = {Dedecker, J{\'e}r{\^o}me and Merlev{\`e}de, Florence and Rio, Emmanuel},
  title   = {Rates of Convergence in the Central Limit Theorem for Martingales in the Non-stationary Setting},
  journal = {Annales de l'Institut Henri Poincar{\'e}, Probabilit{\'e}s et Statistiques},
  volume  = {58},
  number  = {2},
  pages   = {945--966},
  year    = {2022},
  doi     = {10.1214/21-AIHP1182}
}

@article{guo2024wasserstein,
  author  = {Guo, Xiaoqin},
  title   = {On the Rate of Convergence of the Martingale Central Limit Theorem in {Wasserstein} Distances},
  journal = {Electronic Journal of Probability},
  volume  = {31},
  number  = {68},
  pages   = {1--29},
  year    = {2026},
  doi     = {10.1214/26-EJP1519}
}

@misc{belloni2018high,
  author        = {Belloni, Alexandre and Oliveira, Roberto I.},
  title         = {A High Dimensional Central Limit Theorem for Martingales, with Applications to Context Tree Models},
  journal={arXiv preprint arXiv:1809.02741},
  year          = {2018}
}

@misc{wu2026berry,
  author        = {Wu, Weichen and Le, Dung and Kuchibhotla, Arun Kumar and Rinaldo, Alessandro},
  title         = {{Berry--Esseen} Bounds for Multivariate Martingale Difference Sequences in the {Kolmogorov} Distance},
  journal={arXiv preprint arXiv:2605.03100},
  year          = {2026}
}

@article{yurinskii1978error,
  title={On the error of the {Gaussian} approximation for convolutions},
  author={Yurinskii, Vadim V},
  journal={Theory of Probability \& its Applications},
  volume={22},
  number={2},
  pages={236--247},
  year={1978},
  publisher={SIAM}
}

@article{paulin2026theoretical,
  author        = {Paulin, Daniel and Whalley, Peter A.},
  title         = {Theoretical Guarantees for Stochastic Gradient Sampling Methods via {Gaussian} Convolution Inequalities},
  journal={arXiv preprint arXiv:2604.24632},
  year          = {2026}
}

@article{li2020uniform,
  title={Uniform nonparametric inference for time series},
  author={Li, Jia and Liao, Zhipeng},
  journal={Journal of Econometrics},
  volume={219},
  number={1},
  pages={38--51},
  year={2020},
  publisher={Elsevier}
}

@article{von1967multi,
  title={Multi-dimensional integral limit theorems for large deviations},
  author={von Bahr, Bengt},
  journal={Arkiv f{\"o}r matematik},
  volume={7},
  number={1},
  pages={89--99},
  year={1967},
  publisher={Kluwer Academic Publishers Dordrecht}
}

@book{petrov2012sums,
  title={Sums of independent random variables},
  author={Petrov, Valentin V},
  year={2012},
  publisher={Springer Science \& Business Media}
}

@article{zhai2018high,
  title={A high-dimensional {CLT} in {$\mathcal{W}_2$} distance with near optimal convergence rate},
  author={Zhai, Alex},
  journal={Probability Theory and Related Fields},
  volume={170},
  number={3--4},
  pages={821--845},
  year={2018},
  doi={10.1007/s00440-017-0771-3}
}

@article{zhang2026gaussian,
  title={Gaussian approximation for multivariate martingale sums from uniformly ergodic {Markov} chains},
  author={Zhang, Yixuan and Xie, Qiaomin},
  journal={arXiv preprint arXiv:2609.09480},
  year={2026}
}

@article{bolbotowski2026sharp,
  title={Sharp inequalities between Zolotarev and Wasserstein distances in ${P}_2(\mathbb{R}^d)$},
  author={Bo{\l}botowski, Karol and Bouchitt{\'e}, Guy},
  journal={Probability Theory and Related Fields},
  pages={1--17},
  year={2026},
  publisher={Springer}
}









\end{document}